\newtheorem{thm}{Theorem}[section]
\newtheorem{prop}[thm]{Proposition}
\newtheorem{cor}[thm]{Corollary}
\newtheorem{lemma}[thm]{Lemma}
\newtheorem{fact}[thm]{Fact}
\newtheorem{question}[thm]{Question}
\theoremstyle{definition}
\newtheorem{rmk}[thm]{Remark}
\newtheorem{dfn}[thm]{Definition}
\newtheorem{exam}[thm]{Example}
\newtheorem{nota}[thm]{Notation}
\newtheorem{claim}[thm]{Claim}
\newtheorem{theorem}[thm]{Theorem}
\newtheorem{definition}[thm]{Definition}
\newtheorem{remark}[thm]{Remark}
\newtheorem{example}[thm]{Example}
\newtheorem{rem/def}[thm]{Remark/Definition}
\newtheorem{notation}[thm]{Notation}
\def \tree {2{^{<\omega }}}
\def \trek {2{{^{<\kappa}}}}
\def \treo {\omega{^{<\omega}}}
\def \troo {\omega{^{\omega}}}
\def \trec {\kappa{^{<\lambda }}}
\def \tri {\trianglelefteq}
\def \trr {\trianglerighteq}
\def \lex {<_{lex}}
\newcommand{\trn}{%
  \mathrel{\ooalign{$\lneq$\cr\raise.21ex\hbox{$\lhd$}\cr}}}
\newcommand{\trrn}{%
  \mathrel{\ooalign{$\gneq$\cr\raise.21ex\hbox{$\rhd$}\cr}}}
\def \coc {{^{\frown}}}
\def \lr {{\langle \rangle}}
\def \lor {{\langle 0 \rangle}}
\def \llr {{\langle 1 \rangle}}
\def \res {\lceil}
\def \cl {{\rm cl}}
\def \la {\langle}
\def \ra {\rangle}
\def \tp {\textrm{tp}}
\def \age {\textrm{age}}
\def \L {\mathcal{L}}
\def \K {\mathcal{K}}
\def \M {\mathbb{M}}
\def \ac {\text{ac}}
\providecommand{\Th}{\operatorname{Th}}
\def\CL{{\mathcal L}}
\def\BN{{\mathbb N}}
\def\BZ{{\mathbb Z}}
\def\tp{\operatorname{tp}}
\def\x{\bar{x}}
\def\y{\bar{y}}
\def\a{\bar{a}}
\def\Th{\operatorname{Th}}
\def\acl{\operatorname{acl}}
\def\dcl{\operatorname{dcl}}
\def\qftp{\operatorname{qftp}}
\title{On the Antichain Tree Property}
\author[J. Ahn]{JinHoo Ahn$^{\dagger}$}
\address{$\dagger$KIAS, School of Computational Sciences\\ 85 Hoegiro, Dongdaemun-gu\\ 02455, Seoul, South Korea}
\email{jinhooahn@kias.re.kr}
\author[J. Kim]{Joonhee Kim$^{\ddagger}$}
\address{$\ddagger$Yonsei University, Department of Mathematics\\ 50 Yonsei-Ro, Seodaemun-Gu\\ 03722, Seoul, South Korea}
\email{kimjoonhee@yonsei.ac.kr}
\author[J. LEE]{Junguk Lee$^{\ast}$}
\address{$^{\ast}$KAIST, Department of Mathematical Sciences\\ 291 Daehak-ro Yuseong-gu\\        34141, Daejeon, South Korea}
\email{ljwhayo@kaist.ac.kr}
\thanks{$\dagger$The first author is supported by a KIAS Individual Grant (CG079801) at Korea Institute for Advanced Study. $\ddagger$The second author is supported by NRF of Korea grants (2018R1D1A1A02085584, 2021R1A2C1009639), and the Yonsei University Research Fund (Post Doc. Researcher Supporting Program) of 2021 (2021-12-0015). $^{\ast}$The third author is supported by KAIST Advanced Institute for Science-X fellowship.\\ \indent The authors thank the anonymous referee for careful reading and all the comments and suggestions.}
\date {\today}
\begin{document}

\maketitle

\begin{abstract}
In this note, we investigate a new model theoretical tree property, called the antichain tree property (ATP). We develop combinatorial techniques for ATP. First, we show that ATP is always witnessed by a formula in a single free variable, and for formulas, not having ATP is closed under disjunction. Second, we show the equivalence of ATP and $k$-ATP, and provide a criterion for theories to have not ATP (being NATP).

 Using these combinatorial observations, we find algebraic examples of ATP and NATP, including pure groups, pure fields, and valued fields. More precisely, we prove Mekler's construction for groups, Chatzidakis' style criterion for PAC fields, and the AKE-style principle for valued fields preserving NATP. And we give a construction of an antichain tree in the Skolem arithmetic and atomless Boolean algebras.
\end{abstract}

\setcounter{tocdepth}{10}

\section{Introduction}

After Shelah's tree property (TP) was introduced, trees became a common configuration when studying first order theories by focusing on consistency and inconsistency of sets defined by instances of a single formula.
In \cite{DS}, Džamonja and Shelah defined a weakened version of TP called NSOP$_1$ and NSOP$_2$. It is obvious that NSOP$_1$ implies NSOP$_2$ by definitions but the other direction is still unknown at the present. 
In \cite{AK}, the first and second author studied the differences between SOP$_1$ and SOP$_2$ and then introduced a new notion around the tree property called the antichain tree property (ATP).


\begin{dfn}\cite[Definition 4.1]{AK}\label{dfn:antichain_tree_property}
We say a formula $\varphi(x,y)$ has the {\it antichain tree property} (ATP) if there exists a tree indexed set of paremeters $(a_\eta)_{\eta\in 2^{<\omega}}$ such that
\begin{enumerate}
\item[(i)] for any antichain $X$ in $2^{<\omega}$, the set $\{\varphi(x,a_\eta):\eta\in X\}$ is consistent,
\item[(ii)] for any $\eta,\nu\in 2^{<\omega}$, if $\eta\trn\nu$, then $\{\varphi(x,a_\eta),\varphi(x,a_\nu)\}$ is inconsistent.
\end{enumerate}
We say a theory has ATP if there exists a formula having ATP. If a theory does not have ATP, then we say the theory has NATP.
\end{dfn}

 If a theory has ATP, then the theory always has TP$_2$ and SOP$_1$ (\cite[Proposition 4.4, 4.6]{AK}). Thus the class of NTP$_2$ theories and the class of NSOP$_1$ theories are subclasses of the class of NATP theories. Therefore we have the following diagram:

\begin{center}
\begin{tikzcd}
 {\rm DLO}  \arrow{r} & {\rm NIP}  \arrow{r} & {\rm NTP}_2\arrow{r} & {\rm NATP} \\ 
& {\rm stable} \arrow{r} \arrow{u} & {\rm simple}\arrow{r} \arrow{u} & {\rm NSOP}_1 \arrow{u} \arrow{r}  & {\rm NSOP}_2 \arrow{r} & \cdots \\
\end{tikzcd}
\end{center}

We will try to convince the reader that ATP can be a meaningful dividing line, and observe several interesting phenomena around ATP. 

This paper consists of two parts. The first part is finding combinatorial tools that deal with ATP, and the second part is finding algebraic examples of ATP and NATP. As the begining of studying ATP, in Section \ref{sec:Combinatorial properties of ATP}, we develop some combinatorial techniques related to ATP. First, we define a suitable indiscernibility for antichains, and find some modification of the path-collapse lemma which will be called the antichain-collapse lemma. By using it, we prove that ATP is always witnessed by a formula in a single free variable and witnessing NATP is closed under disjunction. Second, we introduce the notion of $k$-ATP for $k\ge 2$ and show the equivalence of ATP and $k$-ATP. Also, we give a combinatorial criterion for theories to be NATP using strong indiscernibility.

 In Section \ref{sec:Algebraic examples}, we give algebraic examples of ATP and NATP. We prove that Mekler's construction for groups, the Chatzidakis criterion\footnote{We at first called `the Chatzidakis-Ramsey criterion', and Ramsey suggested to us `the Chatzidakis criterion' because its originality essentially comes from her work in \cite{C2}.} for PAC fields, and the AKE-style principle for valued fields preserving NATP. Specially, we have an NATP pure group and an NATP valued field which are SOP$_1$ and TP$_2$.

 At the end of the paper, we give a construction of an antichain tree in the theories of Skolem arithmetic and atomless Boolean algebras. They will be good illustrations when we want to imagine what an antichain tree looks like.

 The equation ``NATP = NSOP$_1$ + NTP$_2$" plays an important role in the study of NATP, analogous to the role played by Shelah's equation ``NIP = stability + dense linear order" in the study of NIP and of Chernikov's equation ``NTP$_2$ = simplicity + NIP" ({\it cf.}\;\cite{Che}). From the results covered by this paper, especially algebraic examples of ATP and NATP, we can observe that many of the arguments studying NSOP$_1$ and NTP$_2$ are naturally extended to NATP. 
 
 Interestingly, the principle ``ATP $\perp$ SOP$_2$'' also plays an important role, where the expression means ATP has the opposite nature to SOP$_2$ as a tree property (see Remark \ref{rem:ATP_vs_SOP2} below). For example, it can be seen that some arguments of studying NSOP$_2$ are extended to NATP by exchanging the roles of paths and antichains.

\begin{rmk}\label{rem:ATP_vs_SOP2}\cite[Definition 4.1, 4.2]{AK}
Let $\varphi(x,y)$ be a formula and $(a_\eta)_{\eta\in 2^{<\omega}}$ be a tree indexed set of parameters. 
\begin{enumerate}
\item[(i)] We say $(\varphi,(a_\eta))$ witnesses ATP if for any $X\subseteq\tree$, the set of formulas $\{\varphi(x,a_\eta)\}_{\eta\in X}$ is consistent if and only if $X$ is pairwisely {\it incomparable}.
\item[(ii)] We say $(\varphi,(a_\eta))$ witnesses SOP$_2$ if for any $X\subseteq\tree$, the set of formulas $\{\varphi(x,a_\eta)\}_{\eta\in X}$ is consistent if and only if $X$ is pairwisely {\it comparable}.
\end{enumerate}
\end{rmk}

\section{Preliminaries and notations}\label{sec:Preliminaries and notations}

In this section we summarize some notations and basic facts of the subjects to be covered in our paper. First we recall notations and definitions of tree properties.

\begin{notation}\label{language of trees}
Let $\kappa$ and $\lambda$ be cardinals.
\begin{enumerate}
\item[(i)] By $\kappa^\lambda$, we mean the set of all functions from $\lambda$ to $\kappa$. 
\item[(ii)] By $\trec$, we mean $\bigcup_{\alpha<\lambda}{\kappa^\alpha}$ and call it a tree. If $\kappa=2$, we call it a binary tree. If $\kappa\geq\omega$, then we call it an infinitary tree.
\item[(iii)] By $\emptyset$ or $\lr$, we mean the empty string in $\trec$, which means the empty set (recall that every function can be regarded as a set of ordered pairs).
\end{enumerate}
Let $\eta,\nu\in {\trec}$. 
\begin{enumerate}
\item[(iv)] By $\eta\tri\nu$, we mean $\eta \subseteq \nu$.  If $\eta\tri\nu$ or $\nu\tri\eta$, then we say $\eta$ and $\nu$ are comparable.
\item[(v)] By $\eta\perp\nu$, we mean that $\eta\not\tri\nu$ and $\nu\not\tri\eta$. We say $\eta$ and $\nu$ are incomparable if $\eta\perp\nu$.
\item[(vi)] By $\eta\wedge\nu$, we mean the maximal $\xi\in{\trec}$ such that $\xi\tri\eta$ and $\xi\tri\nu$.
\item[(vii)] By $l(\eta)$, we mean the domain of $\eta$.
\item[(viii)] By $\eta\lex\nu$, we mean that either $\eta\tri\nu$, or $\eta\perp\nu$ and $\eta(l(\eta\wedge\nu))<\nu(l(\eta\wedge\nu))$. 
\item[(ix)] By $\eta\coc\nu$, we mean $\eta\cup\{(i+l(\eta),\nu(i)):i< l(\nu)\}$.
\end{enumerate}
Let $X\subseteq \trec$.
\begin{enumerate}
\item[(x)]
By $\eta\coc X$ and $X\coc\eta$, we mean $\{\eta\coc x:x\in X\}$ and $\{x\coc\eta:x\in X\}$ respectively.
\end{enumerate}
Let $\eta_0,...,\eta_n\in\trec$.
\begin{enumerate}
\item[(xi)] By $\cl(\eta_0,...,\eta_n)$, we mean a tuple $(\eta_0\wedge\eta_0,...,\eta_0\wedge\eta_n,...,\eta_n\wedge\eta_0,...,\eta_n\wedge\eta_n)$.
\item[(xii)] We say a subset $X$ of $\trec$ is an {\it antichain} if the elements of $X$ are pairwise incomparable, {\it i.e.}, $\eta\perp\nu$ for all $\eta,\nu\in X$).  
\end{enumerate}
\end{notation}

\begin{dfn}
Let $\varphi(x,y)$ be an $\mathcal{L}$-formula. 
\begin{enumerate}
\item[(i)] We say $\varphi(x,y)$ has the {\it tree property} (TP) if there exists a tree-indexed set $(a_\eta)_{\eta\in\treo}$ of parameters and $k\in\omega$ such that
\begin{enumerate}
\item[] $\{\varphi(x,a_{\eta\res n})\}_{n\in\omega}$ is consistent for all $\eta\in\troo$ (path consistency), and
\item[] $\{\varphi(x,a_{\eta\coc i})\}_{i\in\omega}$ is $k$-inconsistent for all $\eta\in\treo$, {\it i.e.}, any subset of $\{\varphi(x,a_{\eta\coc i})\}_{i\in\omega}$ of size $k$ is inconsistent.
\end{enumerate}
\item[(ii)] We say $\varphi(x,y)$ has the {\it tree property of the first kind} (TP$_1$) if there is a tree-indexed set $(a_\eta)_{\eta\in\treo}$ of parameters such that
\begin{enumerate}
\item[] $\{\varphi(x,a_{\eta\res n})\}_{n\in\omega}$ is consistent for all $\eta\in\troo$, and
\item[] $\{\varphi(x,a_\eta),\varphi(x,a_\nu)\}$ is inconsistent for all $\eta\perp\nu\in\treo$.
\end{enumerate}
\item[(iii)] We say $\varphi(x,y)$ has the {\it tree property of the second kind} (TP$_2$) if there is an array-indexed set $(a_{i,j})_{i,j\in\omega}$ of parameters such that
\begin{enumerate}
\item[] $\{\varphi(x,a_{n,\eta(n)})\}_{n\in\omega}$ is consistent for all $\eta\in\troo$, and
\item[] $\{\varphi(x,a_{i,j}),\varphi(x,a_{i,k}))\}$ is inconsistent for all $i,j,k\in\omega$ with $j\neq k$.
\end{enumerate}
\item[(iv)] We say $\varphi(x,y)$ has the {\it 1-strong order property} (SOP$_1$) if there is a binary-tree-indexed set $(a_\eta)_{\eta\in\tree}$ of parameters such that
\begin{enumerate}
\item[] $\{\varphi(x,a_{\eta\res n})\}_{n\in\omega}$ is consistent for all $\eta\in2^{\omega}$,
\item[] $\{\varphi(x,a_{\eta\coc 1}),\varphi(x,a_{\eta\coc 0\coc\nu})\}$ is inconsistent for all $\eta,\nu\in\tree$. 
\end{enumerate}
\item[(v)] We say $\varphi(x,y)$ has the {\it 2-strong order property} (SOP$_2$) if there is a binary-tree-indexed set $(a_\eta)_{\eta\in\tree}$ of parameters such that
\begin{enumerate}
\item[] $\{\varphi(x,a_{\eta\res n})\}_{n\in\omega}$ is consistent for all $\eta\in2^\omega$,
\item[] $\{\varphi(x,a_\eta),\varphi(x,a_\nu)\}$ is inconsistent for all $\eta\perp\nu\in\tree$.
\end{enumerate}
\item[(vi)] We say a theory has TP if there is a formula having TP with respect to its monster model of the theory. Sometimes we say that the theory is TP, and we call the theory an TP theory. We define TP$_1$ theory, TP$_2$ theory, SOP$_1$ theory, and SOP$_2$ theory in the same manner.
\item[(vii)] We say a theory is NTP if the theory is not TP, and we call the theory NTP theory. We define NTP$_1$ theory, NTP$_2$ theory, NSOP$_1$ theory, and NSOP$_2$ theory in the same manner.
\end{enumerate}
\end{dfn}

The following facts are well known ({\it cf}. \cite{Con}, \cite{DS}, \cite{KK}, and \cite{She}).

\begin{fact}
\begin{enumerate}
\item[(i)] A theory has TP$_1$ if and only if it has SOP$_2$. 
\item[(ii)] A theory has TP if and only if it has TP$_1$ or TP$_2$.
\item[(iii)] A theory has TP if and only if it has SOP$_1$ or TP$_2$.
\item[(iv)] If a theory has SOP$_2$, then it has SOP$_1$.
\end{enumerate}
\end{fact}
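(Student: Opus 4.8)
The plan is to dispatch (iv), (iii), and the easy halves of (i) and (ii) by elementary manipulations of tree-indexed parameters, and to isolate the hard half of (ii) --- $\operatorname{TP}\Rightarrow\tpl\vee\tpz$, which is Shelah's theorem --- as the only step needing a genuine combinatorial argument. For (iv): if $(a_\eta)_{\eta\in\tree}$ witnesses $\sopz$ for $\varphi$, then for all $\eta,\nu\in\tree$ the nodes $\eta\coc\llr$ and $\eta\coc\lor\coc\nu$ disagree at coordinate $l(\eta)$ and hence are incomparable; so $\{\varphi(x,a_{\eta\coc\llr}),\varphi(x,a_{\eta\coc\lor\coc\nu})\}$ is inconsistent, and together with the unchanged branch-consistency clause this is precisely the $\sopl$ configuration. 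In (i), $\tpl\Rightarrow\sopz$ follows by restricting a $\tpl$-witness on $\treo$ to the subtree $\tree\subseteq\treo$ --- branches of $\tree$ are branches of $\treo$, and $\perp$ is inherited. In (ii), $\tpl\Rightarrow\operatorname{TP}$ holds because a $\tpl$-witness already has pairwise, hence $2$-, inconsistent immediate successors together with consistent branches, and $\tpz\Rightarrow\operatorname{TP}$ holds by flattening an array $(a_{i,j})_{i,j\in\omega}$ into the tree that places $a_{n,\eta(n)}$ at the node $\eta$ of length $n+1$ (transversals of the array become branches, one row becomes one set of siblings). Then (iii) is formal: in the forward direction chain (ii), (i), (iv) to get $\operatorname{TP}\Rightarrow\tpl\vee\tpz\Rightarrow\sopz\vee\tpz\Rightarrow\sopl\vee\tpz$; in the backward direction $\tpz\Rightarrow\operatorname{TP}$ is as above, while $\sopl\Rightarrow\operatorname{TP}$ because simple theories are $\nsopl$, so $\sopl$ forces non-simplicity, which is $\operatorname{TP}$ by definition.

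For $\sopz\Rightarrow\tpl$ I would exhibit an explicit tree embedding $\iota\colon\treo\to\tree$, sending $\langle n_0,\dots,n_{k-1}\rangle$ to the concatenation, over $i<k$, of the blocks each consisting of $n_i$ zeros followed by a single $1$. One checks that $\iota$ preserves $\tri$ and $\perp$ and sends every branch $\eta\in\troo$ to an increasing chain whose union is a branch of $\tree$. Then $b_\mu:=a_{\iota(\mu)}$ witnesses $\tpl$: branch-consistency of $(b_\mu)$ follows from that of $(a_\eta)$ along the image branch, and incomparable pairs among the $\mu$'s map to incomparable pairs among the $\iota(\mu)$'s, so the inconsistency transfers. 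The reverse, $\tpl\Rightarrow\sopz$, is already covered above.

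The substance is $\operatorname{TP}\Rightarrow\tpl\vee\tpz$. I would fix $\varphi$ with $\operatorname{TP}$, witnessed by $(a_\eta)_{\eta\in\treo}$, invoke the modeling property for trees (an Erd\H{o}s--Rado argument in the language of trees) to replace the witness by a strongly indiscernible tree still witnessing $\operatorname{TP}$ --- so that the quantifier-free type of a finite tuple of nodes now determines the type of the corresponding parameters --- and take $k$ minimal with immediate successors $k$-inconsistent. Then run the dichotomy on whether $\{\varphi(x,a_{\lor}),\varphi(x,a_{\llr})\}$ is consistent. If it is \emph{inconsistent}, strong indiscernibility makes every pair of siblings inconsistent, and from this one extracts a $\tpz$-array whose rows are fans of immediate successors; the needed transversal-consistency is obtained by using strong indiscernibility to relocate the branch that witnesses path-consistency so that it meets one prescribed successor at each level. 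This relocation is the only genuinely non-formal step, and I expect it to be the main obstacle. If that pair is \emph{consistent}, then $k\geq 3$, and one iterates with $m\leq k$ least such that $\{\varphi(x,a_{\langle 0\rangle}),\dots,\varphi(x,a_{\langle m-1\rangle})\}$ is inconsistent: applying the same analysis to $m$-element fans (Shelah's bookkeeping in \cite{She}) yields $\tpz$, or else $\tpl$ for the formula asserting that $x$ realizes $\varphi(\cdot,y_0)\wedge\cdots\wedge\varphi(\cdot,y_{m-2})$. Reassembling this with the elementary parts above yields the Fact.
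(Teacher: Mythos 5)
The paper does not actually prove this Fact (it is stated with citations to \cite{Con}, \cite{DS}, \cite{KK}, \cite{She}), so the comparison is with the standard proofs in those references. Your elementary reductions are correct: (iv), both halves of (i) (restriction $\tree\subseteq\treo$ one way, the block embedding $\langle n_0,\dots,n_{k-1}\rangle\mapsto 0^{n_0}{}^\frown 1^\frown\cdots{}^\frown 0^{n_{k-1}}{}^\frown 1$ the other), $\tpl\Rightarrow\operatorname{TP}$, the array flattening (modulo the small point that the root needs a parameter --- e.g.\ put a fixed entry of row $0$ at the root and use rows $1,2,\dots$ for levels $1,2,\dots$, otherwise branch-consistency through the root is not guaranteed), and the formal assembly of (iii), with $\sopl\Rightarrow\operatorname{TP}$ outsourced to ``simple implies $\nsopl$''.

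The genuine gap is in your sketch of $\operatorname{TP}\Rightarrow\tpl\vee\tpz$, and specifically in the case where $\{\varphi(x,a_{\lor}),\varphi(x,a_{\llr})\}$ is inconsistent. First, the conclusion you draw there ($\tpz$) is false in general: a strongly indiscernible $\sopz$-witness is exactly a TP-witness with pairwise inconsistent siblings, and such witnesses exist in NIP (hence NTP$_2$) theories, e.g.\ dense meet-trees or a dense chain of refining equivalence relations; so no $\tpz$-array can be extracted from that configuration. Second, the proposed mechanism cannot work: a transversal that picks a prescribed non-branch successor at each level is an \emph{antichain}, and an antichain and a chain have different quantifier-free types in $\mathcal{L}_0=\{\tri,\lex,\wedge\}$, so strong indiscernibility does not ``relocate'' path-consistency onto such a transversal --- this chain/antichain asymmetry is precisely the phenomenon the present paper's ATP is built on. In fact the inconsistent-pair case is the \emph{easy} one and yields $\tpl$ directly: in $\mathcal{L}_0$ (which has no length predicate) any two incomparable pairs with the same $\lex$-orientation are strongly isomorphic, so strong indiscernibility propagates the single sibling inconsistency to all incomparable pairs, giving $\tpl$ on the nose. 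The genuinely hard case is the other one, $k\ge 3$ after minimizing over finite conjunctions of instances of $\varphi$, where Shelah's bookkeeping (as in \cite{She} III.7.11, or \cite{KK}) produces $\tpz$ for a conjunction when antichain-type configurations stay consistent; your sketch defers this to ``the same analysis'', which, as written, is the flawed one. So the cases are essentially assigned to the wrong conclusions, and the step carrying the real content is missing.
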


Now we recall the notion of tree indiscernibility and modeling property. We follow the notations in \cite{TT}. Let $\mathcal{L}_0=\{\tri,\lex,\wedge\}$ be a language where $\tri$, $\lex$ are binary relation symbols, and $\wedge$ is a binary function symbol. Then for cardinals $\kappa>1$ and $\lambda$, a tree $\trec$ can be regarded as an $\mathcal{L}_0$-structure whose interpretations of $\tri,\lex,\wedge$ follow Notation \ref{language of trees}.

\begin{dfn}
Let $\overline{\eta}=( \eta_0,...,\eta_n)$ and $\overline{\nu}=( \nu_0,...,\nu_n)$ be finite tuples of $\trec$. 
\begin{enumerate}
\item[(i)] By $\textrm{qftp}_0(\overline{\eta})$, we mean the set of quantifier free $\mathcal{L}_0$-formulas $\varphi(\overline{x})$ such that $\trec\models \varphi(\overline{\eta})$. 
\item[(ii)] By $\overline{\eta}\sim_0\overline{\nu}$, we mean $\textrm{qftp}_0(\overline{\eta})=\textrm{qftp}_0(\overline{\nu})$. We say $\bar{\eta}$ and $\bar{\nu}$ are {\it strongly isomorphic} if $\overline{\eta}\sim_0\overline{\nu}$.
\end{enumerate}
Let $\mathcal{L}$ be a language, $T$ be a complete $\mathcal{L}$-theory, $\mathbb{M}$ be a monster model of $T$, and $( a_\eta )_{\eta\in\trec}$, $( b_\eta )_{\eta\in\trec}$ be tree-indexed sets of parameters from $\mathbb{M}$. For $\overline{\eta}=( \eta_0,...,\eta_n)$, we denote $( a_{\eta_0},...,a_{\eta_n})$ by $\overline{a}_{\overline{\eta}}$. For any finite set of $\mathcal{L}$-formulas $\Delta$ and a type $\Gamma$, we denote $\{\varphi\in\Gamma : \varphi \in \Delta\}$ by $\Gamma_{\Delta}$. By $\overline{a}_{\overline{\eta}}\equiv_{\Delta,A}\overline{b}_{\overline{\nu}}$ we mean $\textrm{tp}_\Delta(\overline{a}_{\overline{\eta}}/A)=\textrm{tp}_\Delta(\overline{b}_{\overline{\nu}}/A)$.
\begin{enumerate}
\item[(iii)] We say $( a_\eta )_{\eta\in\trec}$ is {\it strongly indiscernible} if $\textrm{tp}(\overline{a}_{\overline{\eta}})=\textrm{tp}(\overline{a}_{\overline{\nu}})$ for all $\textrm{qftp}_0(\overline{\eta})=\textrm{qftp}_0(\overline{\nu})$.
\item[(iv)] We say $( b_\eta)_{\eta\in\trec}$ is {\it strongly locally based} on $(a_\eta)_{\eta\in\trec}$ if for all $\overline{\eta}$ and a finite set of $\mathcal{L}$-formulas $\Delta$, there is $\overline{\nu}$ such that $\overline{\eta}\sim_0\overline{\nu}$ and $\overline{b}_{\overline{\eta}}\equiv_\Delta\overline{a}_{\overline{\nu}}$. 
\end{enumerate}
\end{dfn}

\begin{fact}\label{modeling property}
Let a tree-indexed set $( a_\eta )_{\eta\in\treo}$ be given. Then there is a strongly indiscernible $( b_\eta )_{\eta\in\treo}$ which is strongly locally based on $( a_\eta )_{\eta\in\treo}$. 
\end{fact}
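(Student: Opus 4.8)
The plan is to run the tree-indexed analogue of the classical Ehrenfeucht--Mostowski construction, with $\qftp_0$ playing the role that order type plays in the linear case and with Ramsey theory for trees supplying the pigeonhole step. First I would phrase the target as an $\EM$-type: introduce variables $(y_\eta)_{\eta\in\treo}$ and build a consistent partial type $\Gamma$ containing, for all $\bar\eta\sim_0\bar\nu$ and all $\psi\in\L$, the biconditional $\psi(\bar y_{\bar\eta})\leftrightarrow\psi(\bar y_{\bar\nu})$, together with a coherent choice, for each $\sim_0$-class and each $\psi$, of $\psi(\bar y_{\bar\eta})$ or its negation. A realization $(b_\eta)$ of such a $\Gamma$ is automatically strongly indiscernible, and it is strongly locally based on $(a_\eta)$ as soon as every finite fragment of the chosen $\EM$-type is witnessed by some $\bar a_{\bar\nu}$ with $\bar\nu\sim_0\bar\eta$. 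By compactness it then suffices to produce, for each finite $\Delta\subseteq\L$ and each $\sim_0$-class, a choice of $\Delta$-formulas that is both coherent (invariant under $\sim_0$) and realized inside $(a_\eta)$ --- that is, to exhibit a relabelling of a ``copy of $\treo$'' sitting inside $(a_\eta)$ on which $\tp_\Delta(\bar a_{\bar\eta})$ depends only on $\qftp_0(\bar\eta)$, for tuples up to a prescribed size.

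The combinatorial heart is this last step. Fix a finite $\Delta\subseteq\L$ and a finite subtree $S\subseteq\treo$ closed under $\wedge$ (the sort of pattern $\cl(\bar\eta)$ produces, carrying a full $\mathcal{L}_0$-structure). Colour each $\mathcal{L}_0$-embedding $f\colon S\to\treo$ by the type $\tp_\Delta(\bar a_{f(S)})$; there are only finitely many colours. I would then invoke Milliken's tree theorem, in the form asserting that $\treo$ contains a strong subtree $T$ which --- after compressing the levels it uses --- is $\mathcal{L}_0$-isomorphic to $\treo$ again, and on which this colouring is constant across all copies of $S$. Pulling $(a_\eta)$ back along that isomorphism gives the required relabelling for the pattern $S$ and the set $\Delta$.

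To finish, I would iterate. Enumerate $\L=\{\psi_i\}_{i<\omega}$ and the countably many finite $\wedge$-closed patterns, and apply the Ramsey step repeatedly, each time to a further strong subtree, handling one more formula and one more pattern at each stage. A König's-lemma/compactness diagonalization over this decreasing chain of copies of $\treo$ yields a single $\EM$-type $\Sigma$ that is simultaneously $\sim_0$-invariant and finitely $\Delta$-based on $(a_\eta)$ for every finite $\Delta$; any realization $(b_\eta)$ of the associated $\Gamma$ is then strongly indiscernible and strongly locally based on $(a_\eta)$. The formal packaging of these steps into a modeling property is standard; see \cite{TT} and the tree-indiscernibility literature cited there.

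The one genuinely substantive point --- and where I expect to have to be careful --- is the Ramsey step: one must cite Milliken's tree theorem in exactly the form that returns a strong subtree $\mathcal{L}_0$-isomorphic to $\treo$, and verify that the ``copies of $S$'' Milliken controls are precisely the $\mathcal{L}_0$-embeddings of $S$ that matter here. This is where the choice of language $\mathcal{L}_0=\{\tri,\lex,\wedge\}$ is essential: $\qftp_0(\bar\eta)$ records comparability, relative lexicographic position, and the full meet-pattern of $\bar\eta$ (hence the need to close tuples under $\wedge$ via $\cl$), and all of these data are preserved by the level-compression map of a strong subtree even though $\mathcal{L}_0$ does not name the levels themselves. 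Granting this, the remainder is bookkeeping.
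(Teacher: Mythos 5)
First, a remark on context: the paper does not prove this Fact at all; it is quoted from \cite{KK} and \cite{TT}, so the relevant comparison is with the proofs there (and with Scow's Ramsey-class machinery, Fact \ref{scow's theorem}, which the paper later invokes for $\omega^{\le\kappa}$). Your overall skeleton --- an EM-type for the index structure, a Ramsey-theoretic homogenization, then compactness --- is exactly the standard route. The problem is the Ramsey step as you state it.

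You colour all $\mathcal{L}_0$-embeddings $f\colon S\to\treo$ by $\tp_\Delta(\bar a_{f(S)})$ and claim Milliken's theorem yields a strong subtree on which this colouring is constant \emph{across all copies of $S$}. Milliken's theorem does not give this: it homogenizes colourings of \emph{strongly embedded} (level-coherent) copies, and two $\mathcal{L}_0$-embeddings of the same $S$ with different relative level patterns are different Milliken shapes, whose constant colours need not agree. Since $\mathcal{L}_0=\{\tri,\lex,\wedge\}$ does not see levels, this is not a technicality: take $T=\mathrm{DLO}$ and $a_\eta=q_{l(\eta)}$ for an increasing sequence $(q_n)$ in $\BQ$. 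For the meet-closed pattern $S$ of one incomparable pair, the colour of a copy $(\xi_0\wedge\xi_1,\xi_0,\xi_1)$ records whether $l(\xi_0)<l(\xi_1)$ or $l(\xi_0)>l(\xi_1)$, and \emph{every} strong subtree isomorphic to $\treo$ contains copies of both kinds; so no subtree makes your colouring constant on all $\mathcal{L}_0$-copies of $S$, although the Fact itself is of course true for this $(a_\eta)$. The flag you raise at the end (``the copies of $S$ Milliken controls are precisely the $\mathcal{L}_0$-embeddings that matter, since level compression preserves $\tri,\lex,\wedge$'') is therefore resolved the wrong way: preservation under level compression is not the issue; non-identification of distinct level patterns is. The repair, which is what the cited proofs in effect do, is to homogenize only over level-coherent copies of each fixed shape and then build the EM-type by selecting, for each $\sim_0$-class, one \emph{canonical} shape (for instance, relative levels ordered according to $\lex$, which is a $\sim_0$-invariant rule and is realizable in $\treo$ for every finite meet-closed pattern). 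This suffices because strong local basedness only asks for a witness $\bar\nu$ with $\bar\nu\sim_0\bar\eta$, with no constraint on levels, and because the realization $(b_\eta)$ is produced by compactness rather than found as a subtree of $(a_\eta)$ --- in the example above it cannot be found inside $(a_\eta)$. Without this canonicalization step your EM-type need not be coherent, so as written the argument has a genuine gap; with it, the proof goes through and is essentially the argument of \cite{KK}, \cite{TT}, \cite{Sco}.
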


\noindent The proof can be found in \cite{KK} and \cite{TT}. The above statement is called {\it the modeling property of strong indiscernibility} (in short, we write it {\it the strong modeling property}).

We also recall some knowledge of Ramsey classes. The following definitions and notations are from \cite{Hod}, \cite{KL}, and \cite{Sco}.

\begin{dfn}
Let $\L$ be a language.
\begin{enumerate}
\item[(i)] Let $M$ be an $\L$-structure. By $age(M)$, we mean the class of all finitely generated $\L$-structures which are isomorphic to a substructure of $M$.  
\item[(ii)] Let $A$ and $C$ be $\L$-structures. By $A$-{\it substruture of }$C$, we mean the class of $\L$-structures $A'\subseteq C$ isomorphic to $A$ and denote it by $\binom{C}{A}$.
\item[(iii)] For each $k\in\omega\backslash\{0\}$, we call a map from $\binom{C}{A}$ to $k$ a {\it k-coloring of }$\binom{C}{A}$.
\item[(iv)] Let $\K$ be a class of $\L$-structures, $A, B, C\in\K$, and $k\in\omega\backslash\{0\}$. By $C\to(B)^A_k$, we mean that for all $k$-colorings $f$ of $\binom{C}{A}$, there exists $B'\subseteq C$ such that $B'$ is $\L$-isomorphic to $B$ and $f\res\binom{B'}{A}$ is constant.
\item[(v)] A class $\K$ of $\L$-structures is called a {\it Ramsey class} if for any $A,B\in\K$ and $k\in\omega\backslash\{0\}$, there exists $C\in\K$ such that $C\to(B)^A_k$.
\item[(vi)] An $\L$-structure $\mathcal{I}$ is said to be {\it locally finite} if for all finite subsets $X$ of $\mathcal{I}$, there is a finite substructure $I_0$ of $\mathcal{I}$ containing $X$.
\end{enumerate}
Now we introduce a more general notion of modeling property.
\begin{enumerate}
\item[(vii)] Let $\L$ and $\L'$ be languages, $T$ be an $\L$-theory, $\M$ be a monster model of $T$. Let $\mathcal I$ be an $\mathcal L'$-structure. For a tuple $\bar i\subseteq \mathcal I$, we write $\qftp_{\mathcal L'}(\bar i)$ for the set of quantifier free $\mathcal L'$-formulas $\varphi(x)$ such that $\mathcal I\models \varphi(\bar i)$. We say an $\mathcal{I}$-indexed set $(a_i)_{i\in \mathcal{I}}\subseteq\M$ is {\it $\L'$-indiscernible} if $\bar{a}_{\bar{i}}\equiv\bar{a}_{\bar{j}}$ for all $\bar{i},\bar{j}\subseteq\mathcal{I}$ with $\qftp_{\mathcal L'}(\bar i)=\qftp_{\mathcal L'}(\bar j)$. 
We say an $\mathcal{I}$-indexed set $(b_i)_{i\in\mathcal{I}}\subseteq\M$ is {\it locally $\L'$-based} on $(a_i)_{i\in \mathcal{I}}\subseteq\M$ if for any finite set $\Phi$ of $\L$-formulas and $\bar{i}\in\mathcal{I}$, there exists $\bar{j}\in\mathcal{I}$ such that $\bar{b}_{\bar{i}}\equiv_\Phi\bar{a}_{\bar{j}}$ and $\qftp_{\mathcal L'}(\bar i)=\qftp_{\mathcal L'}(\bar j)$.
We say $\mathcal{I}$-indexed indiscernible sets have the modeling property with respect to $\L'$ (sometimes we write it $\L'$-modeling property) if for any $(a_i)_{i\in \mathcal{I}}\subseteq\M$, there is an $\L'$-indiscernible $(b_i)_{i\in\mathcal{I}}\subseteq\M$ which is locally $\L'$-based on $(a_i)_{i\in \mathcal{I}}$.
\end{enumerate}
\end{dfn}

\noindent The following facts, proved by Scow in \cite{Sco}, play an important role in Section \ref{sec:Combinatorial properties of ATP}.

\begin{dfn}{\cite[Definition 2.3 (2)]{Sco}}
Let $\L$ be a language and $\mathcal{I}$ be an $\L$-structure. We say $\mathcal{I}$ is \texttt{qfi} if for any complete quantifier free type $q(\bar{x})$ realized in $\mathcal{I}$, there is a quantifier-free formula $\theta_q(\bar{x})$ such that $\Th(\mathcal{I})_\forall\cup\theta_q(\bar{x})\vdash q(\bar{x})$.
\end{dfn}

\begin{fact}\label{scow's proposition}{\rm \cite[Proposition 1 (2)]{Sco}}
In the case that $\mathcal{I}$ is a structure in a finite language and is locally finite, then $\mathcal{I}$ is \texttt{qfi}.
\end{fact}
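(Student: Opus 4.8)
The plan is to read off from local finiteness a complete finite ``diagram'' of the substructure generated by a realizing tuple, and to observe that in a finite language this diagram can be packaged into a single quantifier-free formula which already decides the whole quantifier-free type. So let $\mathcal L$ be the (finite) language of $\mathcal I$, fix a complete quantifier-free type $q(\bar x)$ in a finite tuple $\bar x=(x_1,\dots,x_n)$ that is realized in $\mathcal I$, say by $\bar a=(a_1,\dots,a_n)$, so that $q=\qftp_{\mathcal L}(\bar a)$; and let $\langle\bar a\rangle$ be the substructure of $\mathcal I$ generated by $\bar a$. (If $\bar x$ is infinite, a quantifier-free formula uses only finitely many variables, so the argument below applies verbatim after restricting to the relevant finite subtuple.)

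First I would use that $\mathcal I$ is locally finite: since $\langle\bar a\rangle$ is contained in some finite substructure, $\langle\bar a\rangle$ is finite, and as its universe is exactly $\{t^{\mathcal I}(\bar a):t\text{ an }\mathcal L\text{-term}\}$ I may list its elements, together with $a_1,\dots,a_n$, as $t_1(\bar a),\dots,t_m(\bar a)$ for finitely many $\mathcal L$-terms $t_1,\dots,t_m$, chosen so that $t_i=x_i$ for $i\le n$. Next I would define $\theta_q(\bar x)$ to be the conjunction of: (a) $t_i(\bar x)=t_j(\bar x)$ whenever $t_i(\bar a)=t_j(\bar a)$ and $t_i(\bar x)\neq t_j(\bar x)$ otherwise; (b) for every relation symbol $R$ of $\mathcal L$ (of arity $k$) and every $(i_1,\dots,i_k)\in\{1,\dots,m\}^k$, the literal $R(t_{i_1}(\bar x),\dots,t_{i_k}(\bar x))$ or its negation according to whether $\mathcal I\models R(t_{i_1}(\bar a),\dots,t_{i_k}(\bar a))$; and (c) for every function symbol $f$ of $\mathcal L$ (of arity $k$, constants being the case $k=0$) and every $(i_1,\dots,i_k)\in\{1,\dots,m\}^k$, the equation $f(t_{i_1}(\bar x),\dots,t_{i_k}(\bar x))=t_j(\bar x)$ for some $j$ with $f^{\mathcal I}(t_{i_1}(\bar a),\dots,t_{i_k}(\bar a))=t_j(\bar a)$. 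Here finiteness of $\mathcal L$ and local finiteness are used together: finitely many terms $t_i$, finitely many relation and function symbols, each of finite arity, so this is a \emph{finite} conjunction of literals, hence a genuine quantifier-free formula; and plainly $\mathcal I\models\theta_q(\bar a)$.

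Finally I would check $\theta_q(\bar x)\vdash q(\bar x)$, whence $\Th(\mathcal I)_\forall\cup\theta_q(\bar x)\vdash q(\bar x)$ (in fact $\Th(\mathcal I)_\forall$ is not needed). Let $N$ be any $\mathcal L$-structure and $\bar c\in N$ with $N\models\theta_q(\bar c)$. By clause (a) the assignment $t_i(\bar a)\mapsto t_i(\bar c)$ is a well-defined injection; by clause (c) its image $\{t_i(\bar c):i\le m\}$ is closed under the functions of $\mathcal L$, and since it contains $\bar c$ it is precisely the substructure of $N$ generated by $\bar c$; by clause (b) the map preserves and reflects every relation. Hence it is an $\mathcal L$-isomorphism $\langle\bar a\rangle\cong\langle\bar c\rangle$ sending $a_i$ to $c_i$. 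Because the truth value of a quantifier-free formula on a tuple depends only on the generated substructure, $\qftp_{\mathcal L}(\bar c/N)=\qftp_{\mathcal L}(\bar a/\mathcal I)=q$, i.e.\ $N\models q(\bar c)$, as required. The only genuine obstacle is the bookkeeping in the second step: one must be sure that $\langle\bar a\rangle$ is enumerable by finitely many terms (this is exactly local finiteness) and that $\theta_q$ collapses to a finite conjunction (this is exactly finiteness of $\mathcal L$); granting these, the isomorphism argument is routine.
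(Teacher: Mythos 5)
Your argument is correct and is exactly the standard proof behind the cited fact (the paper itself gives no proof, deferring to Scow): local finiteness makes the substructure generated by a realizing tuple finite, finiteness of the language lets you record its full diagram (term equalities, relations, function values) in a single quantifier-free formula $\theta_q$, and the induced isomorphism of generated substructures yields $\theta_q\vdash q$, so in fact $\Th(\mathcal I)_\forall$ is not even needed. The only cosmetic remark is that your parenthetical about infinite tuples is unnecessary (and, read literally, would not give a single isolating formula): the notion of \texttt{qfi} used here concerns complete quantifier-free types in finitely many variables, which is precisely the case your proof handles.
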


\begin{fact}\label{scow's theorem}{\rm \cite[Theorem 3.12]{Sco}} Let $\mathcal{I}$ be a \texttt{qfi}, locally finite structure in a language $\L$ with a relation $<$ linearly ordering $I$. Then $\mathcal{I}$-indexed indiscernible sets have the modeling property if and only if $\age(\mathcal{I})$ is a Ramsey class.
\end{fact}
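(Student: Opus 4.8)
The plan is to prove both implications by compactness, with the Ramsey property of $\age(\mathcal I)$ serving as the bridge: a finite colouring of the copies of a finite $A\in\age(\mathcal I)$ by the $\Delta$-types (for $\Delta$ a finite set of object-language formulas) of the corresponding parameter tuples becomes, on a large enough $C\in\age(\mathcal I)$, monochromatic on a copy of any prescribed finite $B$. Throughout, write $\mathcal L$ for the language of $\mathcal I$, let $T$ be an arbitrary theory with monster model $\M$, and recall that local finiteness of $\mathcal I$ guarantees that every $C\in\age(\mathcal I)$ embeds into $\mathcal I$ and that every finite subset of $\mathcal I$ lies inside a finite substructure, while \texttt{qfi} guarantees that the complete quantifier-free $\mathcal L$-type of a tuple is pinned down by a single quantifier-free formula modulo $\Th(\mathcal I)_\forall$. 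These two hypotheses are exactly what make the conditions below first-order-expressible and the compactness arguments genuinely finitary.

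For ($\age(\mathcal I)$ Ramsey $\Rightarrow$ modeling property): given $(a_i)_{i\in\mathcal I}\subseteq\M$, I would show that the partial type in variables $(x_i)_{i\in\mathcal I}$ asserting simultaneously that $(b_i)_{i\in\mathcal I}$ is $\mathcal L$-indiscernible and that it is locally $\mathcal L$-based on $(a_i)$ is consistent; any realization is the desired $(b_i)$. By compactness it suffices to satisfy, for each finite $\Delta$ of object formulas and each finite tuple length, a finitary fragment, which amounts to finding an assignment of $\Delta$-types to $\qftp_{\mathcal L}$-classes of index tuples that is realized by a copy of some large $B\in\age(\mathcal I)$ sitting inside $(a_i)$. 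To produce it, colour the copies of each of the (finitely many) relevant $A\in\age(\mathcal I)$ inside a suitable $C\in\age(\mathcal I)$ by the $\Delta$-type of the corresponding subtuple of $(a_i)$ — here $C$ is obtained by iterating the Ramsey property finitely often over these $A$'s, and $C$ embeds into $\mathcal I$ since $C\in\age(\mathcal I)$ — and pass to a monochromatic copy of $B$. The \texttt{qfi} hypothesis ensures that ``having the same $\qftp_{\mathcal L}$'' involves only finitely many quantifier-free formulas on such finite tuples, so the fragment really is finite. Combining over all finite $\Delta$ (by compactness once more) yields a coherent template, and the linear order $<$ on $\mathcal I$ lets us realize that template as an honest $\mathcal I$-indexed family.

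For (modeling property $\Rightarrow$ $\age(\mathcal I)$ Ramsey), I argue by contraposition. Suppose $A,B\in\age(\mathcal I)$ and $k$ witness the failure of the Ramsey property, so every finite $C\in\age(\mathcal I)$ carries a $k$-colouring of $\binom{C}{A}$ with no monochromatic copy of $B$. A compactness argument in the compact space $k^{\binom{\mathcal I}{A}}$ — using local finiteness to cover finitely many prescribed copies of $A$ by a single finite substructure $C_0$, and colouring $\binom{C_0}{A}$ without a monochromatic $B$ — shows the relevant closed sets have the finite intersection property, hence there is a single colouring $c\colon\binom{\mathcal I}{A}\to k$ with no monochromatic copy of $B$. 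Using $<$ to fix the increasing enumeration of each finite subset (so that ``being a copy of $A$'' is a single $\qftp_{\mathcal L}$-class, and likewise for $B$), I code $c$ by a family $(a_i)_{i\in\mathcal I}$ together with a finite tuple $\bar\varphi$ of object-language formulas whose truth pattern on $\bar a_{\bar i}$ recovers the value of $c$ on the copy of $A$ enumerated by an increasing tuple $\bar i$. If some $\mathcal L$-indiscernible $(b_i)$ were locally $\mathcal L$-based on $(a_i)$, then $\mathcal L$-indiscernibility would force the induced $\bar\varphi$-colouring to be constant on the copies of $A$ inside any fixed copy of $B$; applying local basedness to an increasing enumeration of a copy of $B$ transfers this constancy back to $c$, producing a $c$-monochromatic copy of $B$, contradicting the choice of $c$.

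The main obstacle is the realization step in the first direction: turning the finitely-approximated Ramsey template into a genuine $\mathcal I$-indexed indiscernible set requires that ``having a prescribed $\qftp_{\mathcal L}$'' behave like a first-order-expressible, essentially finitary condition on tuples — precisely the role played by \texttt{qfi}, local finiteness, and the linear order, and the place where the hypotheses cannot be dropped. (Without an order on $\mathcal I$ the age need not consist of rigid structures, and the colouring-versus-embedding translation underlying both directions breaks down.)
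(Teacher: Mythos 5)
This statement is quoted in the paper as a Fact from Scow (\cite[Theorem 3.12]{Sco}); the paper gives no proof of it, so there is nothing internal to compare against. Judged on its own terms, your sketch is a faithful outline of the standard (Scow-style) argument: the forward direction via colouring copies of each relevant $A\in\age(\mathcal I)$ by $\Delta$-types of the corresponding parameter subtuples, iterating the Ramsey property over the finitely many isomorphism types involved, pulling the monochromatic copy of $B$ back into $\mathcal I$ (possible since $C\in\age(\mathcal I)$), and then compactness; the converse by contraposition, a compactness argument in $k^{\binom{\mathcal I}{A}}$ using local finiteness to get a single bad colouring of $\binom{\mathcal I}{A}$, coding it into a parameter family, and playing indiscernibility against local basedness to manufacture a monochromatic copy of $B$. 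You also correctly identify where \texttt{qfi}, local finiteness and the linear order (rigidity, so that copies of $A$ correspond to a single quantifier-free type of increasing enumerations) enter. One small point to tighten: your opening ``let $T$ be an arbitrary theory with monster model $\M$'' fits the forward direction, but in the converse you must be free to \emph{choose} the structure in which the colouring is coded by formulas (the modeling property in Scow's sense quantifies over all parameter families in all structures); your coding step does this implicitly, but it should be said explicitly. With that caveat, the proposal is correct in outline and matches the known proof of the cited result rather than anything proved in this paper.
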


\section{Combinatorial properties of ATP}\label{sec:Combinatorial properties of ATP}
In this section, we develop several combinatorial tools to study ATP. Unlike the case of dealing with paths in trees, it is not so easy to obtain homogeneity when dealing with antichains. Thus we introduce a suitable notion of indiscernibility for antichains and prove a kind of lifting lemma which is related to this indiscernibility (Lemma \ref{lem:lifting}). By using this, we prove that if a theory has ATP, then there is a witness of ATP in a single free variable (Theorem \ref{one-varable}), and that NATP is preserved under taking disjunction of formulas (Lemma \ref{lem:disjunction_NATP}). We also prove the equivalence of $k$-ATP and ATP at the level of theories (Lemma \ref{lem:k-ATP=ATP}), and find a criterion for a theory to have ATP (Theorem \ref{thm:indisc_ext_NATP}).

\subsection{Obtaining a witness of ATP in a single free variable}

In this subsection, we show that if a theory has ATP, then there exists a formula in a single free variable which witnesses ATP. Following the notation of \cite{Ram}, let us call this one-variable theorem for ATP. The one-variable theorems for SOP$_1$ and TP$_2$ are proved by Ramsey in \cite{Ram}, and by Chernikov in \cite{Che} respectively. Chernikov and Ramsey also proved the one-variable theorem for SOP$_2$ in \cite[Corollary 4.11]{CR} by using ``Path-Collapse Lemma'' which says:

\begin{fact}{\rm \cite[Lemma 4.6]{CR} (Path Collapse)} Suppose $\kappa$ is an infinite cardinal, $(a_\eta)_{\eta\in\trek}$ is a strongly indiscernible tree over a set of parameters $C$ and, moreover, $(a_{0^\alpha})_{0<\alpha<\omega}$ is order indiscernible over $cC$. Let 
\[
p(y;\bar{z})=\tp(c;(a_{0\coc0^\gamma})_{\gamma<\kappa}/C).
\]
Then if 
\[
p(y:(a_{0\coc0^\gamma})_{\gamma<\kappa})\cup p(y:(a_{1\coc0^\gamma})_{\gamma<\kappa})
\]
is not consistent, then $T$ has SOP$_2$, witnessed by a formula with free variables $y$.
\end{fact}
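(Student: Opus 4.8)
The plan is to manufacture, out of the strongly indiscernible tree $(a_\eta)_{\eta\in\trek}$ and the element $c$, a binary tree of parameters witnessing $\sopz$ for a single formula in the variable $y$. First I would run a compactness reduction: from the inconsistency of $p(y;(a_{0\coc0^\gamma})_{\gamma<\kappa})\cup p(y;(a_{1\coc0^\gamma})_{\gamma<\kappa})$ I obtain a single formula $\varphi(y;\bar{z})$, namely a finite conjunction of members of $p$ (so $\bar{z}$ is a finite subtuple of $(z_\gamma)_{\gamma<\kappa}$, possibly with parameters from $C$), such that $\models\varphi(c;\bar{d}_0)$ while $\{\varphi(y;\bar{d}_0),\varphi(y;\bar{d}_1)\}$ is inconsistent, where $\bar{d}_0,\bar{d}_1$ are the finite subtuples of $(a_{0\coc0^\gamma})_{\gamma<\kappa},(a_{1\coc0^\gamma})_{\gamma<\kappa}$ at the coordinates occurring in $\bar{z}$. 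Using the order indiscernibility of the leftmost branch over $cC$ to shift those coordinates down, I may assume $\bar{d}_0=(a_{0^1},\dots,a_{0^M})$ and $\bar{d}_1=(a_{\llr},a_{\llr\coc\lor},\dots,a_{\llr\coc0^{M-1}})$ for some $M<\omega$.

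Next I would build the tree. Define $\rho\colon\tree\to\trek$ by $\rho(\lr)=\lor$ and $\rho(w\coc i)=\rho(w)\coc0^{M-1}\coc\langle i\rangle$ for $i<2$; then $\rho$ preserves $\tri$, $\perp$, and $\lex$, and for each $\xi\in2^\omega$ and each $n$ one has $\rho(\xi\res n)\coc0^{M-1}\tri\rho(\xi\res(n+1))$. Set $\bar{b}_w:=(a_{\rho(w)},a_{\rho(w)\coc\lor},\dots,a_{\rho(w)\coc0^{M-1}})$ for $w\in\tree$ (so $\bar{b}_{\lr}=\bar{d}_0$); I claim $(\varphi,(\bar{b}_w)_{w\in\tree})$ witnesses $\sopz$. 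For the inconsistency part: if $w\perp w'$, say $w\lex w'$, then $\rho(w)\perp\rho(w')$ and $\rho(w)\lex\rho(w')$, and inspecting the meets shows $(\bar{b}_w,\bar{b}_{w'})\sim_0(\bar{d}_0,\bar{d}_1)$ --- both are pairs of $\tri$-chains sitting just above two $\lex$-ordered incomparable nodes, every cross-meet is the common stem, which is $\tri$-below and $\lex$-below both chains and $\wedge$-absorbing, and $\mathcal{L}_0$ cannot see at which level that stem sits. By strong indiscernibility over $C$, $\tp(\bar{b}_w\,\bar{b}_{w'}/C)=\tp(\bar{d}_0\,\bar{d}_1/C)$, so $\{\varphi(y;\bar{b}_w),\varphi(y;\bar{b}_{w'})\}$ is inconsistent.

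For the consistency part: for $\xi\in2^\omega$, the choice of $\rho$ forces every node occurring in $\bigcup_n\bar{b}_{\xi\res n}$ onto the single branch $\bigcup_n\rho(\xi\res n)$; these nodes form a $\tri$-chain of order type $\omega$, so enumerated $\tri$-increasingly they have the same type over $C$ as $(a_{0^\alpha})_{0<\alpha<\omega}$. Fix $\pi\in\mathrm{Aut}(\M/C)$ taking the latter onto the former. Each $\bar{b}_{\xi\res n}$ is an increasing $M$-subtuple of $\pi\big((a_{0^\alpha})_{0<\alpha<\omega}\big)$, so $\pi^{-1}(\bar{b}_{\xi\res n})$ is an increasing $M$-subtuple of $(a_{0^\alpha})_{0<\alpha<\omega}$; since $\models\varphi(c;\bar{d}_0)$, order indiscernibility of $(a_{0^\alpha})_{0<\alpha<\omega}$ over $cC$ gives $\models\varphi(c;\pi^{-1}(\bar{b}_{\xi\res n}))$, hence $\models\varphi(\pi(c);\bar{b}_{\xi\res n})$, for every $n$. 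Thus $\pi(c)$ realizes $\{\varphi(y;\bar{b}_{\xi\res n})\}_{n<\omega}$, so branches are consistent; together with the previous paragraph, $T$ has $\sopz$ witnessed by a formula whose free variable is $y$.

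The hard part is the design of $\rho$: it must be a $\perp$-embedding, so that the single inconsistency $\{\varphi(y;\bar{d}_0),\varphi(y;\bar{d}_1)\}$ propagates along all incomparable pairs by strong indiscernibility, while it must simultaneously route the successive frames $\bar{b}_{\xi\res n}$ along one branch of the big tree, so that consistency along each $\sopz$-branch can be recovered from the one realization $c$. Interposing a gap of exactly $0^{M-1}$ --- one node shorter than a frame --- between a node's image and its children's images is what makes the two demands compatible. The hypothesis that the leftmost branch is order indiscernible over $cC$ enters at both ends: to normalize $\varphi$ to low coordinates in the first step, and to slide the witness $c$ onto all frames at once in the last step.
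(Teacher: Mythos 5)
The paper gives no proof of this statement --- it is imported verbatim as a Fact from \cite{CR} --- so the only comparison available is with the source, and your reconstruction is essentially that argument: compactness yields a single $\varphi(y;\bar{z})$ with $\models\varphi(c;\bar{d}_0)$ and $\{\varphi(y;\bar{d}_0),\varphi(y;\bar{d}_1)\}$ inconsistent, and your embedding $\rho$ (inserting $0^{M-1}$-blocks) makes every incomparable pair of frames strongly isomorphic to $(\bar{d}_0,\bar{d}_1)$, hence $2$-inconsistent over $C$ by strong indiscernibility, while the frames along a fixed path tile an $\omega$-chain inside a single branch, so an automorphism over $C$ carrying $(a_{0^\alpha})_{0<\alpha<\omega}$ onto that chain moves $c$ to a realization of the path. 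Both verifications you sketch (the $\sim_0$-computation for incomparable pairs, where all cross-meets collapse to one stem invisible to $\mathcal{L}_0$, and the fact that pulled-back frames are increasing $M$-subtuples of the leftmost branch) check out, so $\varphi$ witnesses $\sopz$ in the free variables $y$. One caveat: your preliminary step ``shift those coordinates down'' needs $(a_{0\coc 0^\gamma})_{\gamma<\kappa}$ to be indiscernible over $cC$ at every coordinate occurring in the finite inconsistent fragment, and those coordinates may well be $\ge\omega$; the hypothesis as quoted here only asserts indiscernibility of $(a_{0^\alpha})_{0<\alpha<\omega}$ over $cC$, which appears to be a slip for the original formulation (the whole branch indiscernible over $cC$), and your proof should be read against that intended hypothesis. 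The remainder of your argument, including the final sliding of $c$ along the branch, genuinely uses only the first $\omega$ terms, so the caveat is confined to that normalization step.
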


In order to obtain a witness of ATP in a single free variable, we find an appropriate statement which is similar to the path-collapse lemma. In short, we find a condition for the consistency of a partial type $p(y;\bar{a})\cup p(y;\bar{b})$, where $\bar{a}$, $\bar{b}$ are strongly isomorphic antichains in the same tree which is strongly indiscernible over $c$, and $p(y;\bar{z})=\tp(c;\bar{a})$.

To do this, we first develop some techniques to control the homogeneity of antichains in tree-indexed sets. We introduce an appropriate language which describes the relation between elements of an antichain and show the modeling property with respect to the language. And by using this we show that there is a realization $c$ of the set of formula $\{\varphi(x,a_\eta)\}_{\eta\in X}$, where $X$ is an antichain such that $(a_\eta)_{\eta\in X}$ is strongly indiscernible over $c$. 

\begin{dfn}
Let $I$ be a linearly ordered set whose cardinality is larger than $1$, $\alpha$ be an ordinal, and $X\subseteq I^{<\alpha}$ be an antichain. 
We say $X$ is {\it universal} if for each finite antichain $Y$ in $I^{<\alpha}$, there exists $X_0\subseteq X$ such that $Y\sim_0 X_0$.
\end{dfn}

\begin{exam}
If $\lambda$ is a limit ordinal, then $\omega^\lambda$ and $2^\lambda$ are universal antichains. We can construct universal antichains without using limit ordinals. In $2^{<\omega}$, $\bar{X}=\bigcup_{i\in\omega}(1^i\coc0\coc X_i)$ is also an universal antichain, where $\{X_i\}_{i\in\omega}$ is an enumeration of all finite antichains in $2^{<\omega}$. 
\end{exam}

\begin{nota}
Let $\mathcal{L}_\delta=\{\Delta, \lex\}$ be a language where $\Delta$ is a quarternary relation symbol.
\end{nota}

For a limit ordinal $\kappa$, $\omega^{\leq\kappa}$ can be regarded as an $\mathcal{L}_0$-structure as usual. For each $X\subseteq\omega^{\leq\kappa}$, if $X$ is an antichain, then we can regard $X$ as an $\mathcal{L}_\delta$-structure by interpreting $\Delta$ by
\begin{center}
$\Delta(\eta_0,\eta_1,\eta_2,\eta_3)$ if and only if $\eta_0\wedge\eta_1\tri\eta_2\wedge\eta_3$,
\end{center}
where $\tri$ and $\wedge$ are as usual in $\omega^{\leq\kappa}$.
From now, by $\delta$-indiscernibility, $\delta$-basedness, and $\delta$-modeling property, we mean the $\L_{\delta}$-indiscernibility, the $\L_\delta$-basedness, and the $\L_\delta$-modeling property respectively. For $L_\delta$-structure $X$ and $\bar{\eta},\bar{\nu}\subseteq X$, by $\bar{\eta}\sim_\delta\bar{\nu}$ we mean that the quantifier free $\L_\delta$-types of $\bar{\eta}$ and $\bar{\nu}$ are the same.

\begin{lemma}\label{ss->ll}
Let $\kappa$ be a limit ordinal, and $\bar{\eta}=(\eta_0,...,\eta_n)$ and $\bar{\nu}=(\nu_0,...,\nu_n)$ be tuples in $\omega^\kappa$.
If $\bar{\eta}\sim_{\delta}\bar{\nu}$, then $\cl(\bar{\eta})\sim_0\cl(\bar{\nu})$ in $\omega^{\le\kappa}$. Conversely, if $\cl(\bar{\eta})\sim_0\cl(\bar{\nu})$ in $\omega^{\leq\kappa}$, then $\bar{\eta}\sim_{\delta}\bar{\nu}$. Therefore, if $(a_\eta)_{\eta\in \omega^{\leq\kappa}}$ is strongly indiscernible, then $(a_\eta)_{\eta\in \omega^\kappa}$ is $\delta$-indiscernible.
\begin{proof}
Suppose $\bar{\eta}\sim_\delta\bar{\nu}$. To show $\cl(\bar{\eta})\sim_0\cl(\bar{\nu})$, it is enough to show that
\begin{center}
$\eta_i\wedge\eta_j\tri\eta_k\wedge\eta_l$ if and only if $\nu_i\wedge\nu_j\tri\nu_k\wedge\nu_l$,\\
$\eta_i\wedge\eta_j\lex\eta_k\wedge\eta_l$ if and only if $\nu_i\wedge\nu_j\lex\nu_k\wedge\nu_l$,
\end{center}
for all $i,j,k,l\leq n$. The first condition is given by $\Delta$. Thus it is enough to show that the second condition holds. Suppose $\eta_i\wedge\eta_j\lex\eta_k\wedge\eta_l$. If $\eta_i\wedge\eta_j\tri\eta_k\wedge\eta_l$, then by $\Delta$, we have $\nu_i\wedge\nu_j\tri\nu_k\wedge\nu_l$ and hence $\nu_i\wedge\nu_j\lex\nu_k\wedge\nu_l$. If $\eta_i\wedge\eta_j\not\tri\eta_k\wedge\eta_l$, then $\eta_i\wedge\eta_j\not\trr\eta_k\wedge\eta_l$ and $\eta_i,\eta_j\lex\eta_k,\eta_l$ thus we have $\nu_i\wedge\nu_j\not\tri\nu_k\wedge\nu_l$, $\nu_i\wedge\nu_j\not\trr\nu_k\wedge\nu_l$, and $\nu_i,\nu_j\lex\nu_k,\nu_l$. Therefore $\nu_i\wedge\nu_j\lex\nu_k\wedge\nu_l$. 

Conversely, we need to show that $\cl(\bar{\eta})\sim_0\cl(\bar{\nu})$ implies $\bar{\eta}\sim_\delta\bar{\nu}$. But this implication is trivial because $\cl(\bar{\eta})$, $\cl(\bar{\nu})$ are meet-closures of $\bar{\eta}$, $\bar{\nu}$, and $\mathcal{L}_0$ contains $\tri$ which defines $\Delta$.
\end{proof}
\end{lemma}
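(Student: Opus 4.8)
The plan is to prove the two quantifier-free-type equivalences separately and then read off the indiscernibility statement. For the direction ``$\cl(\bar\eta)\sim_0\cl(\bar\nu)\Rightarrow\bar\eta\sim_\delta\bar\nu$'', I would just observe that every atomic $\mathcal{L}_\delta$-fact about $\bar\eta$ unfolds into an atomic $\mathcal{L}_0$-fact about the meet-closure: $\bar\eta$ appears among the coordinates of $\cl(\bar\eta)$ (each $\eta_i=\eta_i\wedge\eta_i$), so $\lex$ on $\bar\eta$ is already part of $\qftp_0(\cl(\bar\eta))$, and $\Delta(\eta_i,\eta_j,\eta_k,\eta_l)$ is by definition the statement $\eta_i\wedge\eta_j\tri\eta_k\wedge\eta_l$ between two coordinates of $\cl(\bar\eta)$. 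Hence $\qftp_0(\cl(\bar\eta))$ determines $\qftp_\delta(\bar\eta)$.

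For the converse, ``$\bar\eta\sim_\delta\bar\nu\Rightarrow\cl(\bar\eta)\sim_0\cl(\bar\nu)$'', I would show that $\qftp_\delta(\bar\eta)$ pins down the entire quantifier-free $\mathcal{L}_0$-diagram of $\cl(\bar\eta)=(\eta_i\wedge\eta_j)_{i,j\le n}$, i.e.\ the relations $=$, $\tri$, $\wedge$, $\lex$ between pairs of coordinates $\eta_i\wedge\eta_j$ and $\eta_k\wedge\eta_l$. The $\tri$-relations are exactly what $\Delta$ records; $=$ is the conjunction of the two directions of $\tri$; and for $\wedge$ I would use the standard tree fact that the pairwise meets of a finite set are totally $\tri$-ordered, so that $(\eta_i\wedge\eta_j)\wedge(\eta_k\wedge\eta_l)=\eta_i\wedge\eta_j\wedge\eta_k\wedge\eta_l$ is the $\tri$-least among the pairwise meets of $\{\eta_i,\eta_j,\eta_k,\eta_l\}$, which of them that is being decided by the $\Delta$-relations. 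The only substantive point is $\lex$ on $\cl(\bar\eta)$: I would split on whether $\eta_i\wedge\eta_j$ and $\eta_k\wedge\eta_l$ are $\tri$-comparable. If they are, the $\lex$-comparison follows from the $\tri$- (hence $\Delta$-) information. If they are not, then $\eta_i\wedge\eta_j\perp\eta_k\wedge\eta_l$, and the key claim is that their splitting level $m$ equals $l(\eta_i\wedge\eta_k)$ with $(\eta_i\wedge\eta_j)(m)=\eta_i(m)$ and $(\eta_k\wedge\eta_l)(m)=\eta_k(m)$; consequently $\eta_i\wedge\eta_j\lex\eta_k\wedge\eta_l$ becomes equivalent to $\eta_i\lex\eta_k$ (together with $\eta_i\perp\eta_k$), all of which are quantifier-free $\mathcal{L}_\delta$-statements about $\bar\eta$. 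So $\qftp_\delta(\bar\eta)$ fixes $\lex$ on $\cl(\bar\eta)$ as well.

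The main obstacle is the verification of that last claim — that when $\eta_i\wedge\eta_j\perp\eta_k\wedge\eta_l$ the splitting level of the two meets coincides with that of $\eta_i$ and $\eta_k$. This comes down to a short computation: $w:=(\eta_i\wedge\eta_j)\wedge(\eta_k\wedge\eta_l)$ is a proper initial segment of each of $\eta_i\wedge\eta_j$ and $\eta_k\wedge\eta_l$, so if $\eta_i$ and $\eta_k$ agreed strictly beyond level $l(w)$, then $\eta_i\wedge\eta_j$ and $\eta_k\wedge\eta_l$ would agree at position $l(w)$ and their meet would be strictly longer than $w$, a contradiction. One also has to check that each auxiliary fact used (pairwise meets form a chain, $\tri$ implies $\lex$, $\perp$ is expressible by $\Delta$) is quantifier-free in $\mathcal{L}_\delta$ on the tuple $\bar\eta$ and hence transported by $\sim_\delta$. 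Granting both implications, the final assertion is formal: if $(a_\eta)_{\eta\in\omega^{\le\kappa}}$ is strongly indiscernible and $\bar\eta,\bar\nu\subseteq\omega^\kappa$ satisfy $\bar\eta\sim_\delta\bar\nu$, then $\cl(\bar\eta)\sim_0\cl(\bar\nu)$ by the first implication, so $\tp(\bar a_{\cl(\bar\eta)})=\tp(\bar a_{\cl(\bar\nu)})$ by strong indiscernibility, and restricting to the diagonal coordinates $\eta_i=\eta_i\wedge\eta_i$ gives $\tp(\bar a_{\bar\eta})=\tp(\bar a_{\bar\nu})$, i.e.\ $(a_\eta)_{\eta\in\omega^\kappa}$ is $\delta$-indiscernible.
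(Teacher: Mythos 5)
Your proposal is correct and follows essentially the same route as the paper: reduce $\cl(\bar\eta)\sim_0\cl(\bar\nu)$ to preservation of $\tri$ and $\lex$ among the pairwise meets, note that $\tri$ (hence equality and the $\wedge$-structure) is exactly what $\Delta$ records, and in the incomparable case transfer the $\lex$-comparison of the meets through the $\lex$-relations among the original coordinates (the paper uses $\eta_i,\eta_j\lex\eta_k,\eta_l$ where you use $\eta_i\lex\eta_k$ together with the splitting-level computation -- the same underlying fact, which you verify in more detail than the paper does); the trivial converse and the final indiscernibility step are identical. One inaccuracy to repair: the ``standard tree fact'' you cite, that the pairwise meets of a finite set are totally $\tri$-ordered, is false -- for $\eta_0=\la 0,0\ra$, $\eta_1=\la 0,1\ra$, $\eta_2=\la 1,0\ra$, $\eta_3=\la 1,1\ra$ the pairwise meets include $\la 0\ra$ and $\la 1\ra$, which are incomparable. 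What you actually need (and what is true) is weaker: the meet of all four elements is itself one of the pairwise meets -- at its splitting level two of the four elements must already disagree, which is exactly the computation you carry out later -- hence it is the unique $\tri$-least element of the set of pairwise meets, and which coordinate realizes it is decided by the $\Delta$-relations; with that substitution your treatment of the $\wedge$-terms goes through unchanged.
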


\begin{rmk}\label{modeling properties of Q^omega}
Let $\kappa$ be a limit ordinal. Then the strong modeling property for $\omega^{\leq\kappa}$ holds by Fact \ref{scow's proposition} and Fact \ref{scow's theorem} since $\age(\omega^{\leq\kappa})=\age(\omega^{<\kappa})$. 
\end{rmk}

\begin{lemma}
$\omega^\kappa$ has the $\delta$-modeling property for each limit ordinal $\kappa$.
\begin{proof}
Let $\kappa$ be a limit ordinal and assume that $(a_\eta)_{\eta\in\omega^\kappa}$ is given. We want to find $\delta$-indiscernible $(b_\eta)_{\eta\in\omega^\kappa}$ which is $\delta$-based on $(a_\eta)_{\eta\in\omega^\kappa}$. 

First we complete $(a_\eta)_{\eta\in\omega^{\leq\kappa}}$ from $(a_\eta)_{\eta\in\omega^\kappa}$ by defining $a_\eta=a_{\eta\coc 0^{\kappa'}}$ for each $\eta\in\omega^{<\kappa}$, where $|\eta|+\kappa'=\kappa$. By applying the strong modeling property on $(a_\eta)_{\eta\in\omega^{\leq\kappa}}$, we obtain a strongly indiscernible tree $(b_\eta)_{\eta\in\omega^{\leq\kappa}}$ which is strongly locally based on $(a_\eta)_{\eta\in\omega^{\leq\omega}}$.

Now we show that $(b_\eta)_{\eta\in\omega^\kappa}$ is $\delta$-indiscernible and $\delta$-based on $(a_\eta)_{\eta\in\omega^\kappa}$. By Lemma \ref{ss->ll}, $(b_\eta)_{\eta\in\omega^\kappa}$ is $\delta$-indiscernible. Choose any $\eta_0,...,\eta_n\in\omega^\kappa$ and a finite set of formulas, say $\Phi$ . Then there exist $\nu_0,...,\nu_n\in\omega^{\leq\kappa}$ such that $\cl(\eta_0,...,\eta_n)\sim_0\cl(\nu_0,...,\nu_n)$ and $\bar{b}_{\cl(\bar{\eta})}\equiv_\Phi\bar{a}_{\cl(\bar{\nu})}$. In particular, we have
\[
b_{\eta_0}...b_{\eta_n}\equiv_\Phi a_{\nu_0}...a_{\nu_n}=a_{\nu'_0}...a_{\nu'_n},
\] 
where $\nu'_i=\nu_i$ if $\nu_i\in\omega^\kappa$, and $\nu'_i=\nu_i\coc0^{\kappa'}$ if $\nu_i\in\omega^{<\kappa}$, where $|\nu_i|+\kappa'=\kappa$. Since $\cl(\bar{\eta})\sim_0\cl(\bar{\nu})\sim_0\cl(\bar{\nu'})$, we have $\bar{\eta}\sim_\delta\bar{\nu'}$. Thus $(b_\eta)_{\eta\in\omega^\kappa}$ is $\delta$-based on $(a_\eta)_{\eta\in\omega^\kappa}$.
\end{proof}
\end{lemma}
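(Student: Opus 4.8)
The plan is to deduce the $\delta$-modeling property for $\omega^{\kappa}$ (with $\kappa$ a limit ordinal) directly from the strong modeling property for $\omega^{\leq\kappa}$, which is available by Remark \ref{modeling properties of Q^omega}, together with the translation between $\sim_0$ on meet-closures and $\sim_\delta$ on antichains supplied by Lemma \ref{ss->ll}. So the whole argument is a transfer argument: given an arbitrary $(a_\eta)_{\eta\in\omega^\kappa}$, I first extend it to a tree-indexed family $(a_\eta)_{\eta\in\omega^{\leq\kappa}}$ (pushing each proper node $\eta\in\omega^{<\kappa}$ down to a leaf by the canonical map $\eta\mapsto\eta\coc 0^{\kappa'}$ where $|\eta|+\kappa'=\kappa$), apply the strong modeling property to get a strongly indiscernible $(b_\eta)_{\eta\in\omega^{\leq\kappa}}$ strongly locally based on it, and then restrict back to $\omega^\kappa$.

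The two things left to check are $\delta$-indiscernibility of $(b_\eta)_{\eta\in\omega^\kappa}$ and $\delta$-basedness over the original family. For indiscernibility: if $\bar\eta\sim_\delta\bar\nu$ in $\omega^\kappa$, then by the forward direction of Lemma \ref{ss->ll} we get $\cl(\bar\eta)\sim_0\cl(\bar\nu)$ in $\omega^{\leq\kappa}$, and strong indiscernibility of $(b_\eta)$ gives $\bar b_{\cl(\bar\eta)}\equiv\bar b_{\cl(\bar\nu)}$, hence in particular $\bar b_{\bar\eta}\equiv\bar b_{\bar\nu}$. For basedness: fix $\bar\eta=(\eta_0,\dots,\eta_n)\in\omega^\kappa$ and a finite set $\Phi$ of formulas; strong local basedness of $(b_\eta)$ on the extended $(a_\eta)$ yields $\bar\nu=(\nu_0,\dots,\nu_n)\in\omega^{\leq\kappa}$ with $\cl(\bar\eta)\sim_0\cl(\bar\nu)$ and $\bar b_{\cl(\bar\eta)}\equiv_\Phi\bar a_{\cl(\bar\nu)}$, so a fortiori $b_{\eta_0}\cdots b_{\eta_n}\equiv_\Phi a_{\nu_0}\cdots a_{\nu_n}$. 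Replacing each $\nu_i\in\omega^{<\kappa}$ by $\nu_i'=\nu_i\coc 0^{\kappa'}\in\omega^\kappa$ does not change the value of $a_{\nu_i}$ (by our choice of the extension), so $b_{\eta_0}\cdots b_{\eta_n}\equiv_\Phi a_{\nu_0'}\cdots a_{\nu_n'}$; and since pushing nodes to leaves along $0$'s does not alter the meet-closure structure, $\cl(\bar\nu)\sim_0\cl(\bar\nu')$, hence $\cl(\bar\eta)\sim_0\cl(\bar\nu')$, and then the converse direction of Lemma \ref{ss->ll} gives $\bar\eta\sim_\delta\bar\nu'$, as required.

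I do not expect a serious obstacle here; the only point that needs a little care is verifying that $\cl(\bar\nu)\sim_0\cl(\bar\nu')$ under the leaf-pushing operation, i.e.\ that appending a string of $0$'s below each node leaves all pairwise meets and their $\tri$- and $\lex$-relations unchanged. This is immediate from the fact that $\nu_i\tri\nu_i'$ with $\nu_i'$ extending $\nu_i$ only by $0$'s, so $\nu_i'\wedge\nu_j'=\nu_i\wedge\nu_j$ whenever $\nu_i\perp\nu_j$, while the comparable cases and the cases where some $\nu_i$ is itself already a leaf are checked similarly. The other mild subtlety — that the extended family is well-defined and that $\age(\omega^{\leq\kappa})=\age(\omega^{<\kappa})$ so that Fact \ref{scow's proposition} and Fact \ref{scow's theorem} genuinely apply — is exactly what Remark \ref{modeling properties of Q^omega} records, so I may quote it.
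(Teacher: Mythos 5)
Your proposal is correct and follows essentially the same route as the paper's own proof: extend the family to $\omega^{\leq\kappa}$ by the leaf-pushing map $\eta\mapsto\eta\coc 0^{\kappa'}$, apply the strong modeling property, restrict back to $\omega^\kappa$, and use Lemma \ref{ss->ll} in both directions for $\delta$-indiscernibility and $\delta$-basedness. Your extra check that $\cl(\bar\nu)\sim_0\cl(\bar\nu')$ is exactly the point the paper leaves implicit (and the comparable case you mention cannot actually occur, since the $\nu_i$ inherit pairwise incomparability from the leaves $\eta_i$), so nothing further is needed.
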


Now we show the lifting property of $\delta$-indiscernibility.

\begin{dfn}
Suppose $(a_\eta)_{\eta\in\omega^{\leq\kappa}}$ is strongly indiscernible over a set of parameters $D$ and $X$ is an antichain in $\omega^{<\kappa}$. Let $\Sigma(z)$ be a partial type over $D(a_\eta)_{\eta\in X}$. We say $\Sigma(z)$ is {\it closed under $\sim_\delta$ ($\delta$-isomorphism) in $(a_\eta)_{\eta\in X}$ over $D$} if $\varphi(z,\bar{a}_{\bar{\eta}},\bar{d})\in\Sigma(z)$ and $\bar{\eta}\sim_\delta\bar{\nu}$ always imply $\varphi(z,\bar{a}_{\bar{\nu}},\bar{d})\in\Sigma(z)$, for each $\varphi(z,\bar{y},\bar{d})\in\L(D)$ and $\bar{\eta},\bar{\nu}\in X$.
\end{dfn}

\begin{lemma}\label{lem:lifting}
Assume that $\kappa$ is a limit ordinal and $(a_\eta)_{\eta\in\omega^{\leq\kappa}}$ is strongly indiscernible over a set of parameters $D$. Let $X$ be a universal anichain in $\omega^{\leq\kappa}$. Let $\Sigma(z)$ be a consistent partial type over $D(a_\eta)_{\eta\in X}$ which is closed under $\sim_\delta$ in $(a_\eta)_{\eta\in X}$ over $D$. Then there exists $b$ such that:
\begin{enumerate}
\item[(i)] $b\models\Sigma(z)$,
\item[(ii)] $(a_\eta)_{\eta\in X}$ is $\delta$-indiscernible over $bD$.
\end{enumerate}
\end{lemma}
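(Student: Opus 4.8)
The plan is to build $b$ by a compactness argument over the index set $X$, using the universality of $X$ to keep all partial realizations $\delta$-isomorphic to one another and then gluing them with strong indiscernibility. First I would fix, for every finite antichain $Y \subseteq \omega^{\leq\kappa}$, the unique (up to $\sim_0$) strong-isomorphism type it can realize inside $X$; universality of $X$ guarantees that for each finite tuple $\bar\eta$ from $X$ there is a tuple $\bar\nu$ from $X$ with $\bar\eta \sim_\delta \bar\nu$ realizing any prescribed $\delta$-type that is consistent with being a subantichain of $X$. The key move is to consider the type
\[
\Gamma(z) = \Sigma(z) \cup \{ \psi(z, \bar a_{\bar\eta}, \bar a_{\bar\nu}, \bar d) \leftrightarrow \psi(z, \bar a_{\bar\eta'}, \bar a_{\bar\nu'}, \bar d) : \bar\eta\bar\nu \sim_\delta \bar\eta'\bar\nu',\ \psi \in \L(D) \},
\]
i.e. $\Sigma$ together with the requirement that $\tp(b/D(a_\eta)_{\eta\in X})$ be $\delta$-invariant. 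I would show $\Gamma$ is finitely satisfiable, which gives a $b$ realizing (i), and then (ii) is immediate: the $\delta$-invariance clauses say exactly that $(a_\eta)_{\eta\in X}$ is $\delta$-indiscernible over $bD$.

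For finite satisfiability, take finitely many formulas from $\Gamma$. They mention finitely many tuples of indices from $X$ and finitely many formulas $\psi_1,\dots,\psi_m$. Let $\bar\eta$ enumerate all indices appearing. I would then pass to the strong-indiscernibility picture: since $(a_\eta)_{\eta\in\omega^{\leq\kappa}}$ is strongly indiscernible over $D$, the type of $\bar a_{\bar\eta}$ over $D$ depends only on $\qftp_0(\cl(\bar\eta))$, equivalently (by Lemma \ref{ss->ll}) only on $\qftp_\delta(\bar\eta)$. Using consistency of $\Sigma$, pick $b_0 \models \Sigma$; then for the finitely many $\delta$-classes of tuples occurring, I want to find a single element that "averages" the $\psi_i$-behaviour across all $\sim_\delta$-equivalent tuples. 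This is where the universality of $X$ and an auxiliary Ramsey/pigeonhole step enter: by enlarging to a strongly indiscernible extension indexed by a larger universal antichain (or by using the $\delta$-modeling property proved just above), one may assume that for the relevant finite $\Delta$-set the family $(a_\eta)_{\eta\in X}$ is already $\delta$-indiscernible over $b_0 D$ up to $\Delta$; closure of $\Sigma$ under $\sim_\delta$ ensures the finitely many instances of $\Sigma$ we need are still satisfied by the new witness.

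Concretely, the cleanest route I would take: let $\Delta$ be the finite set of $\L$-formulas involved; apply the $\delta$-modeling property to $(a_\eta)_{\eta\in X}$ over $D$ together with $b_0$ to get $(a'_\eta)_{\eta\in X}$ which is $\delta$-indiscernible over $b_0 D$ and locally $\delta$-based on $(a_\eta b_0)_{\eta\in X}$ over $D$; then for each finite tuple $\bar\eta$ there is $\bar\nu$ with $\bar\eta\sim_\delta\bar\nu$ and $b_0 \bar a'_{\bar\eta} \equiv_\Delta b_0 \bar a_{\bar\nu}$. Because $\Sigma$ is closed under $\sim_\delta$ and $b_0 \models \Sigma$, the relevant $\Delta$-part of $\Sigma$ still holds of $b_0$ with the $a'$ tuples, so $b_0$ witnesses the chosen finite subset of $\Gamma$ with $(a'_\eta)$ in place of $(a_\eta)$; finally transport back by an automorphism fixing $D$ and sending the relevant $a'$-tuples to the corresponding $a$-tuples (possible since they have the same strong type over $D$, using strong indiscernibility of the ambient tree). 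Compactness then yields the desired $b$ over the actual $(a_\eta)_{\eta\in X}$.

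The main obstacle I anticipate is the bookkeeping in that last transport step: one must be careful that applying the $\delta$-modeling property does not destroy the values of $\Sigma$ (handled by the $\sim_\delta$-closure hypothesis, which is exactly why it is imposed) and that the automorphism realigning the modeled copy with the original antichain exists — this needs that $X \subseteq \omega^{\leq\kappa}$ sits inside a strongly indiscernible tree, so that any two $\sim_0$-isomorphic finite configurations are conjugate over $D$, plus a compactness/saturation argument to promote this from finite tuples to all of $X$ simultaneously. Universality of $X$ is what makes the $\delta$-classes of tuples in $X$ coincide with the abstract $\delta$-classes of finite antichains, so no "new" $\delta$-type appears when we pass to the modeled copy; without it the matching $\bar\nu \in X$ might fail to exist. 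I expect everything else to be routine.
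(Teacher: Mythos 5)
Your proposal is correct, and it shares the paper's skeleton --- introduce the invariance scheme $\Gamma(z)$ consisting of the biconditionals over $\sim_\delta$-equivalent tuples from $X$, prove $\Sigma\cup\Gamma$ finitely satisfiable, and finish by compactness, using the $\sim_\delta$-closure of $\Sigma$ together with an automorphism over $D$ at the transport step --- but the finite-satisfiability argument is executed by a different route. The paper works entirely inside the original family: it fixes $c\models\Sigma$, colors the copies of the finite configuration $Z$ inside a suitable $Y_1\subseteq X$ by the truth values of the relevant formulas at $c$ (following Scow's Claim 4.12), extracts a monochromatic copy $Y_2$ of $Y_0$, and moves $Y_2$ onto $Y_0$ by a $D$-automorphism, which exists because $(a_\eta)_{\eta\in X}$ is $\delta$-indiscernible over $D$; closure of $\Sigma$ under $\sim_\delta$ then gives $\sigma(c)\models\Sigma_0\cup\Gamma_0$. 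You instead invoke the $\delta$-modeling property over $b_0D$ to manufacture an auxiliary $\delta$-indiscernible copy $(a'_\eta)_{\eta\in X}$, check the finite fragment there (local basedness plus $\sim_\delta$-closure for the $\Sigma_0$-part, indiscernibility for the $\Gamma_0$-part), and transport back via an automorphism over $D$ after observing that the relevant $a'$-tuples have the same full type over $D$ as the corresponding $a$-tuples (local basedness combined with $\delta$-indiscernibility of the original family over $D$). By Scow's theorem the two arguments have the same combinatorial content --- your modeling-property black box is exactly the Ramsey property of $\mathrm{age}(X)=\mathrm{age}(\omega^{\omega})$ that the paper applies by hand --- so yours is the ``soft'' packaging and the paper's is the direct coloring; the paper's version avoids the auxiliary copy and the type-equality transport, while yours avoids setting up the coloring explicitly. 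Two points deserve care. First, the $\delta$-modeling property ``proved just above'' is stated for $\omega^{\kappa}$, not for an arbitrary universal antichain $X$; you need to note that $X$ is locally finite, \texttt{qfi}, linearly ordered by $\lex$, and that $\mathrm{age}(X)=\mathrm{age}(\omega^{\omega})$ is Ramsey (which follows from the $\omega^{\kappa}$ case together with Fact \ref{scow's theorem}), so Scow's theorem applies to $X$ itself; this is the same fact the paper's proof quotes. Second, your closing worry about promoting the realignment ``from finite tuples to all of $X$ simultaneously'' is unnecessary: finite satisfiability of $\Sigma\cup\Gamma$ only ever requires moving the finitely many tuples mentioned in $\Sigma_0\cup\Gamma_0$, and compactness does the rest.
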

\begin{proof}
Let $\Gamma(z)$ be
\[
\{\psi(a_{\eta_0},...,a_{\eta_n},z)\leftrightarrow\psi(a_{\nu_0},...,a_{\nu_n},z):\psi\in\mathcal{L}(D), (\eta_0,...,\eta_n)\sim_\delta(\nu_0,...,\nu_n)\subseteq X\}.
\]
By adapting the proof scheme of \cite[Claim 4.12]{Scow}, we show that $\Sigma(z)\cup\Gamma(z)$ is consistent. By compactness, it is enough to show that $\Sigma_0\cup\Gamma_0$ is consistent for finite subsets $\Sigma_0$ of $\Sigma$ and $\Gamma_0$ of $\Gamma$.

Choose any $c\models\Sigma(z)$. Since $\Gamma_0$ is finite, there is a finite set $$\Lambda=\{(\varphi_0(\bar x_{\bar \mu^0};z),p_0),\ldots,(\varphi_m(\x_{\bar \mu^m};z),p_m)\},$$ where $\x_{\mu^i}:=(x_{\mu_0^i},\ldots,x_{\mu_{n_i}^i})$ and $p_i=\tp^{qf}_{\CL_{\delta}}(\mu_0^i,\ldots,\mu_{n_i}^i)$ for each $i\le m$, satisfying that there are finite tuples $\bar \eta^i$ and $\bar \nu^i$ of elements of $X$ for each $i\le m$ such that

\begin{align*}
\Gamma_0&= \{\varphi_i(\a_{\bar\eta^i};z)\leftrightarrow \varphi_i(\a_{\bar\nu^i};z):\bar\eta^i,\bar\nu^i\models p_i,i\le m\}\\
&\subset \{\varphi_i(\a_{\bar\eta};z)\leftrightarrow\varphi_i(\a_{\bar\nu};z):(\varphi_i,p_i)\in \Lambda,\bar\eta,\bar\nu\models p_i\}.
\end{align*}
\noindent Let $Z$ be the substructure of $X$ generated by $\{\bar \eta^0,...,\bar{\eta}^m\}$.  Let $Y_{0}$ be the finite substructure of $X$ generated by 
\[
\{\bar\eta^i,\bar\nu^i:i\le m\}\;\cup\;\text{dom}(\Sigma_0).
\]
Note that $Z=\{\bar \eta^0,...,\bar{\eta}^m\}$ and $Y_{0}=\{\bar\eta^i,\bar\nu^i:i\le m\}\;\cup\;\text{dom}(\Sigma_0)$ since $\L_\delta$ has no function symbol.
Since $\age(X)=\age(\omega^{\omega})$ has the Ramsey property, there is $Y_1\in\age(X)$ such that $Y_1\rightarrow (Y_{0})^Z_{2^{m+1}}$. We may assume $Y_1\subseteq X$.
Consider a coloring $c:\binom{Y_1}{Z}\rightarrow 2^{m+1}$ given as follows: For $\la\bar{\xi}^0,...,\bar{\xi}^m\ra\in \binom{Y_1}{Z}$, $c(\la\bar{\xi}^0,...,\bar{\xi}^m\ra)=\la\varepsilon^0,...,\varepsilon^m\ra$ where
$$
\varepsilon^i=\begin{cases}
0&\mbox{if }\models \varphi_i(\a_{\bar{\xi}^i};c)\\
1&\mbox{if }\models \neg\varphi_i(\a_{\bar{\xi}^i};c)
\end{cases}
$$ for each $i\leq m$.
Since $Y_1\rightarrow(Y_{0})^Z_{2^{m+1}}$, there is a substructure $Y_2$ of $Y_1$ isomorphic to $Y_{0}$ such that $c$ is monochromatic on $\binom{Y_2}{Z}$. 
Since $(a_{\eta})_{\eta\in X}$ is $\delta$-indiscernible over $D$, there is an $D$-automorphism $\sigma$ with $\sigma\left((a_{\eta})_{\eta\in Y_2}\right)=(a_{\eta'})_{\eta'\in Y_{0}}$. Since $\Sigma$ is closed under $\sim_\delta$ and $c\models \Sigma$, $\sigma(c)\models \Sigma_0$ and $\sigma(c)\models\Sigma_0\cup \Gamma_0$.
\end{proof}

To show the one-variable theorem for ATP, we need to modify the path-collapse lemma for the purpose of dealing with antichains and ATP. We first see how to construct maximal antichains.

\begin{dfn}
An antichain $X\subseteq 2^{<\kappa}$ is said to be {\it maximal} if there is no antichain $Y\subseteq 2^{<\kappa}$ containing $X$ properly.
\end{dfn}

\begin{rmk}\label{number of maximal antichains}
Let $\alpha_n$ denotes the number of all maximal antichains in $2^{<n}$. Then $\alpha_0=0$ and $\alpha_{n+1}=\alpha_n^2+1$ for each $n\in\omega$. 
\begin{proof}
Let $\{X_i\}_{i\in\alpha_n}$ be the set of all maximal antichains in $2^{<n}$. Then 
\[
\{(\lor\coc X_i)\cup(\llr\coc X_j):i,j<\alpha_n\}\cup\{\emptyset\}
\]
is the set of all maximal antichains in $2^{<n+1}$. Thus $\alpha_{n+1}=\alpha_n^2+1$ for each $n\in\omega$.
\end{proof}
\end{rmk}

\noindent In Remark \ref{number of maximal antichains}, to obtain all maximal antichains in $2^{<n+1}$, there are two main processes. The first process is to take the products of two copies of all maximal antichains in $2^{<n}$. This process is reflected by ``$\{(\lor\coc X_i)\cup(\llr\coc X_j):i,j<\alpha_n\}$'' and ``$\alpha_n^2$'' in Remark \ref{number of maximal antichains}. The second process is to add one more maximal antichain $\{\emptyset\}$ to $\{(\lor\coc X_i)\cup(\llr\coc X_j):i,j<\alpha_n\}$, where $\{\emptyset\}$ is located below all antichains constructed in the first process. This completes the set of all maximal antichains in $2^{<n+1}$. The second process is reflected by ``$\cup\{\emptyset\}$'' and ``$+1$''.

To continue, we introduce the notion of collapsible family of antichains.

\begin{dfn}\label{def:collapsible_family}
Let $\kappa$ be a limit ordinal and $X_0,...,X_n\subseteq \omega^{<\kappa}$ be universal antichains with $|X_0|=...=|X_n|$ and $X_0\sim_0...\sim_0 X_n$. Let us consider the following condition.
\begin{enumerate}
\item[($\ast$)] For any set $C\subseteq\mathbb{M}$, $b\in\mathbb{M}$, tree indexed set $(a_\eta)_{\eta\in \omega^{<\kappa}}$ which is strongly indiscernible over $C$, and $i\leq n$, if $(a_\eta)_{\eta\in X_i}$ is $\delta$-indiscernible over $bC$, then $\bigcup_{j\leq n}p(y,(a_\eta)_{\eta\in X_j})$ is consistent where $p(y,\bar{z})=\tp(b,(a_\eta)_{\eta\in X_i}/C)$ or there is a formula with free variable $y$ which witnesses ATP.
\end{enumerate}
If $X_0,...,X_n$ satisfies ($\ast$), then we say they are {\it collapsible}. By a {\it collapsible family}, we mean a set of universal antichains which are collapsible.
\end{dfn}

\begin{rmk} 
Let $X_0,...,X_n$ be collapsible and $X_0...X_n\sim_0 X'_0...X'_n$. Then $X'_0,...,X'_n$ are also collapsible.
\end{rmk}

\noindent Now we prove two antichain-collapse lemmas (Lemma \ref{collapsing is preserved by product} and Lemma \ref{meet closure antichain}), which reflect the first and second processes in Remark \ref{number of maximal antichains}. The first antichain-collapse lemma claims that the product of two collapsible families is also collapsible. 

\begin{lemma}\label{collapsing is preserved by product}{\rm (The 1st Antichain Collapse)} 
Let $\kappa$ be a sufficiently large cardinal, and $\{X_0,$ $...,$ $X_n\}$ be a collapsible family in $\omega^{<\kappa}$. Then for any $\nu,\xi\in\omega^{<\kappa}$ with $\nu\perp\xi$ and $\nu\lex\xi$, 
\[
\{X^\nu_i\cup X^\xi_j:i,j\leq n \}
\]
is a collapsible family, where $X^\nu_i=\nu\coc X_i$ and $X^\xi_j=\xi\coc X_j$ for each $i,j\leq n$.
\begin{proof}
Clearly $X^\nu_i\cup X^\xi_j$ is an universal antichain for each $i,j\leq n$, and $X^\nu_i\cup X^\xi_j\sim_0 X^\nu_{i'}\cup X^\xi_{j'}$ for all $i,i',j,j'\leq n$. Suppose $C\subseteq\mathbb{M}$, $b\in\mathbb{M}$, and $(a_\eta)_{\eta\in\omega^{<\kappa}}$ is a strongly indiscernible tree over $C$. Assume $(a_\eta)_{\eta\in X^\nu_{i_0}\cup X^\xi_{j_0}}$ is $\delta$-indiscernible over $bC$ for some $i_0,j_0\leq n$. Let
\[
p(y,\bar{z},\bar{u})=\tp(b,(a_\eta)_{\eta\in X^\nu_{i_0}},(a_\eta)_{\eta\in X^\xi_{j_0}}/C).
\]
We first show that $\bigcup_{j\leq n}p(y,(a_\eta)_{\eta\in X^\nu_{i_0}},(a_\eta)_{\eta\in X^\xi_j})$ is consistent or there is a formula with free variable $y$ which witnesses ATP. Since $(a_\eta)_{\eta\in X^\nu_{i_0}\cup X^\xi_{j_0}}$ is $\delta$-indiscernible over $bC$, it is true that $(a_\eta)_{\eta\in X^\xi_{j_0}}$ is $\delta$-indiscernible over $bC(a_\eta)_{\eta\in X^\nu_{i_0}}$. Note that $(a_\eta)_{\eta\in\xi\coc\omega^{<\kappa}}$ is strongly indiscernible over $C(a_\eta)_{\eta\in X^\nu_{i_0}}$. Let 
\[
q(y,\bar{u})=\tp(b,(a_\eta)_{\eta\in X^\xi_{j_0}}/C(a_\eta)_{\eta\in X^\nu_{i_0}}).
\]
Then $\bigcup_{j\leq n}q(y,(a_\eta)_{\eta\in X^\xi_j})$ is consistent or there is a formula with free variable $y$ which witnesses ATP. Note that
\[
p(y, (a_\eta)_{\eta\in X^\nu_{i_0}}, (a_\eta)_{\eta\in X^\xi_j})\subseteq q(y,(a_\eta)_{\eta\in X^\xi_j})
\]
for each $j\leq n$. Thus $\bigcup_{j\leq n}p(y, (a_\eta)_{\eta\in X^\nu_{i_0}}, (a_\eta)_{\eta\in X^\xi_j})$ is consistent or there is a formula with free variable $y$ which witnesses ATP.

Now we show that $\bigcup_{i\leq n}\bigcup_{j\leq n}p(y,(a_\eta)_{\eta\in X^\nu_{i}},(a_\eta)_{\eta\in X^\xi_j})$ is consistent or there is a formula with free variable $y$ which witnesses ATP. Assume that there is no formula with free variable $y$ which witnesses ATP. Then $\bigcup_{j\leq n}q(y,(a_\eta)_{\eta\in X^\xi_j})$ is consistent, as we observed above. Note that $(a_\eta)_{\eta\in X^\nu_{i_0}}$ is $\delta$-indiscernible over $bC(a_\eta)_{\eta\in X^\xi_{j_0}}$.
Thus $\bigcup_{j\leq n}q(y,(a_\eta)_{\eta\in X^\xi_j})$ is closed under $\sim_\delta$ in $(a_\eta)_{\eta\in X^\nu_{i_0}}$ over $C(a_\eta)_{\eta\in X^\xi_{\leq n}}$. Since $(a_\eta)_{\eta\in\nu\coc\omega^{<\kappa}}$ is strongly indiscernible over $C(a_\eta)_{\eta\in X^\xi_{\leq n}}$, we can apply Lemma \ref{lem:lifting} to obtain $b'$ such that
\begin{center}
$\tp(b'/C(a_\eta)_{\eta\in X^\nu_{i_0}\cup X^\xi_{\leq n}})\supseteq\bigcup_{j\leq n}q(y,(a_\eta)_{\eta\in X^\xi_j})$,
\end{center}
and $(a_\eta)_{\eta\in X^\nu_{i_0}}$ is $\delta$-indiscernible over $b'C(a_\eta)_{\eta\in X^\xi_{\leq n}}$. Let 
\[
r(y,\bar{z})=\tp(b',(a_\eta)_{\eta\in X^\nu_{i_0}}/C(a_\eta)_{\eta\in X^\xi_{j\leq n}}).
\]
Then $\bigcup_{i\leq n}r(y,(a_\eta)_{\eta\in X^\nu_i})$ is consistent since we assume there is no ATP witness with free variable $y$. It is clear that 
\[
\bigcup_{j\leq n}p(y,(a_\eta)_{\eta\in X^\nu_i},(a_\eta)_{\eta\in X^\xi_j})\subseteq r(y,(a_\eta)_{\eta\in X^\nu_i})
\]
for each $i\leq n$. Thus $\bigcup_{i\leq n}\bigcup_{j\leq n}p(y,(a_\eta)_{\eta\in X^\nu_{i}},(a_\eta)_{\eta\in X^\xi_j})$ is consistent. This completes the proof.
\end{proof}
\end{lemma}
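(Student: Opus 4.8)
The plan is to verify condition $(\ast)$ of Definition~\ref{def:collapsible_family} for the family $\{X^\nu_i\cup X^\xi_j:i,j\le n\}$ by invoking the collapsibility of $\{X_0,\dots,X_n\}$ twice: once ``along the $\xi$-cone'' with the $\nu$-cone frozen into the base parameters, and once ``along the $\nu$-cone'' with the $\xi$-cone frozen into the base, gluing the two steps together with the lifting lemma (Lemma~\ref{lem:lifting}). Throughout, the dichotomy ``the relevant type is consistent, or some formula with free variable $y$ witnesses ATP'' has to be transported; I will assume no such ATP witness exists and derive consistency. The first thing I would record is a rigidity package: since $\nu\perp\xi$ and $\nu\lex\xi$, the maps $\eta\mapsto\nu\coc\eta$ and $\eta\mapsto\xi\coc\eta$ are $\mathcal{L}_0$- (hence $\mathcal{L}_\delta$-) embeddings of $\omega^{<\kappa}$ onto $\nu\coc\omega^{<\kappa}$ and $\xi\coc\omega^{<\kappa}$, so every $X^\nu_i$, $X^\xi_j$ is again a universal antichain, and every $\zeta$ in the $\nu$-cone and $\mu$ in the $\xi$-cone satisfy $\zeta\perp\mu$, $\zeta\wedge\mu=\nu\wedge\xi$, $\zeta\lex\mu$; hence the relative $\mathcal{L}_0/\mathcal{L}_\delta$-configuration of the two cones is completely determined. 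From this I get $X^\nu_i\cup X^\xi_j\sim_0 X^\nu_{i'}\cup X^\xi_{j'}$ for all $i,i',j,j'$, together with two re-basing facts that carry the argument: (a) if $(a_\eta)_{\eta\in\omega^{<\kappa}}$ is strongly indiscernible over $C$, then $(a_\eta)_{\eta\in\xi\coc\omega^{<\kappa}}$ is strongly indiscernible over $C(a_\eta)_{\eta\in X^\nu_{i_0}}$, and $(a_\eta)_{\eta\in\nu\coc\omega^{<\kappa}}$ is strongly indiscernible over $C(a_\eta)_{\eta\in X^\xi_{\le n}}$; (b) if $(a_\eta)_{\eta\in X^\nu_{i_0}\cup X^\xi_{j_0}}$ is $\delta$-indiscernible over $bC$, then $(a_\eta)_{\eta\in X^\xi_{j_0}}$ is $\delta$-indiscernible over $bC(a_\eta)_{\eta\in X^\nu_{i_0}}$, and symmetrically $(a_\eta)_{\eta\in X^\nu_{i_0}}$ is $\delta$-indiscernible over $bC(a_\eta)_{\eta\in X^\xi_{j_0}}$. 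Both reduce, via the rigidity, to the remark that appending a fixed tuple drawn from the opposite cone alters neither the $\sim_0$-class nor the $\sim_\delta$-class of a tuple from one cone.

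For the first application, fix $i_0,j_0\le n$, assume $(a_\eta)_{\eta\in X^\nu_{i_0}\cup X^\xi_{j_0}}$ is $\delta$-indiscernible over $bC$, and put $p(y,\bar z,\bar u)=\tp(b,(a_\eta)_{\eta\in X^\nu_{i_0}},(a_\eta)_{\eta\in X^\xi_{j_0}}/C)$. By (a) and (b), the re-indexed tree $(a_{\xi\coc\eta})_{\eta\in\omega^{<\kappa}}$, the element $b$, and the base $C(a_\eta)_{\eta\in X^\nu_{i_0}}$ satisfy the hypotheses of $(\ast)$ for $\{X_0,\dots,X_n\}$, with $X_{j_0}$ as the $\delta$-indiscernible part; hence (no ATP witness in $y$) $\bigcup_{j\le n}q(y,(a_\eta)_{\eta\in X^\xi_j})$ is consistent, where $q(y,\bar u)=\tp(b,(a_\eta)_{\eta\in X^\xi_{j_0}}/C(a_\eta)_{\eta\in X^\nu_{i_0}})$. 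Since $p(y,(a_\eta)_{\eta\in X^\nu_{i_0}},(a_\eta)_{\eta\in X^\xi_j})\subseteq q(y,(a_\eta)_{\eta\in X^\xi_j})$ for each $j$, the union $\bigcup_{j\le n}p(y,(a_\eta)_{\eta\in X^\nu_{i_0}},(a_\eta)_{\eta\in X^\xi_j})$ is consistent as well.

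Now the second application, with the lifting lemma as glue. By the symmetric half of (b), $(a_\eta)_{\eta\in X^\nu_{i_0}}$ is $\delta$-indiscernible over $bC(a_\eta)_{\eta\in X^\xi_{j_0}}$, and since each formula in $\bigcup_{j}q$ mentions only one $X^\xi_j$ at a time, this already shows that $\bigcup_{j\le n}q(y,(a_\eta)_{\eta\in X^\xi_j})$ is closed under $\sim_\delta$ in $(a_\eta)_{\eta\in X^\nu_{i_0}}$ over $C(a_\eta)_{\eta\in X^\xi_{\le n}}$. Together with the second half of (a), Lemma~\ref{lem:lifting} (applied to the $\nu$-cone, possibly after a routine extension to an $\omega^{\le\kappa}$-index set) yields $b'$ realizing $\bigcup_{j\le n}q(y,(a_\eta)_{\eta\in X^\xi_j})$ with $(a_\eta)_{\eta\in X^\nu_{i_0}}$ $\delta$-indiscernible over $b'C(a_\eta)_{\eta\in X^\xi_{\le n}}$. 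Then the re-indexed tree $(a_{\nu\coc\eta})_{\eta\in\omega^{<\kappa}}$, the element $b'$, and the base $C(a_\eta)_{\eta\in X^\xi_{\le n}}$ meet the hypotheses of $(\ast)$ for $\{X_0,\dots,X_n\}$, so $\bigcup_{i\le n}r(y,(a_\eta)_{\eta\in X^\nu_i})$ is consistent, where $r(y,\bar z)=\tp(b',(a_\eta)_{\eta\in X^\nu_{i_0}}/C(a_\eta)_{\eta\in X^\xi_{\le n}})$. Finally, since $b'\models q(y,(a_\eta)_{\eta\in X^\xi_j})$ for every $j$, unwinding the definitions gives $p(y,(a_\eta)_{\eta\in X^\nu_i},(a_\eta)_{\eta\in X^\xi_j})\subseteq r(y,(a_\eta)_{\eta\in X^\nu_i})$ for all $i,j\le n$, whence $\bigcup_{i,j\le n}p(y,(a_\eta)_{\eta\in X^\nu_i},(a_\eta)_{\eta\in X^\xi_j})$ is consistent, which is exactly $(\ast)$ for the product family.

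The step I expect to be the main obstacle is the rigidity package (a)--(b): stating and checking precisely that absorbing one cone into the parameter set disturbs neither the strong indiscernibility of the other cone nor the $\sim_\delta$-classes of tuples inside it, and in particular the bookkeeping observation that $\delta$-indiscernibility over $bC(a_\eta)_{\eta\in X^\xi_{j_0}}$ already suffices for the $\sim_\delta$-closure hypothesis of Lemma~\ref{lem:lifting} over the larger base $C(a_\eta)_{\eta\in X^\xi_{\le n}}$. Once this is in place, the remainder is a chain of ``restrict the type, then enlarge the base'' inclusions threaded through the ATP dichotomy.
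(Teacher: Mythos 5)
Your proposal is correct and follows essentially the same route as the paper's proof: one application of collapsibility along the $\xi$-cone over the base $C(a_\eta)_{\eta\in X^\nu_{i_0}}$, then Lemma~\ref{lem:lifting} to produce $b'$, then a second application along the $\nu$-cone over $C(a_\eta)_{\eta\in X^\xi_{\leq n}}$, finishing with the same type inclusions $p\subseteq q$ and $p\subseteq r$. The only difference is that you make explicit the re-basing and $\sim_\delta$-closure facts that the paper merely asserts, which is a fair elaboration rather than a new argument.
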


\noindent Then we get the following proposition by applying the first antichain-collapse lemma repeatedly.
\begin{prop}\label{repeating}
Let $\{X_0,...,X_n\}$ be a collapsible family in $\omega^{<\kappa}$. Let $X=\{\xi_0\lex ...\lex\xi_m\}$ be a finite antichain in $\omega^{<\kappa}$. Then,
\begin{center}
$\{\bigcup_{k\leq m} X^{\xi_k}_i : i\leq n\}$
\end{center}
is a collapsible family, where $X^{\xi_k}_i=\xi_k\coc X_i$ for each $i\leq n$ and $k\leq m$.
\end{prop}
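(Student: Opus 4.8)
The plan is to prove the statement by induction on $m$, with the first antichain‑collapse lemma (Lemma~\ref{collapsing is preserved by product}) as the engine of the inductive step, precisely reflecting the two processes in Remark~\ref{number of maximal antichains}. Before starting I would record two trivial closure properties of collapsible families. First, if $\{Y_0,\dots,Y_N\}$ is collapsible then so is $\{\rho\coc Y_0,\dots,\rho\coc Y_N\}$ for any $\rho\in\omega^{<\kappa}$: prepending a fixed prefix $\rho$ induces an $\mathcal{L}_0$‑isomorphism, so $Y_0\cdots Y_N\sim_0(\rho\coc Y_0)\cdots(\rho\coc Y_N)$, and one may quote the remark following Definition~\ref{def:collapsible_family}. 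Second, every subfamily of a collapsible family is collapsible, since condition~$(\ast)$ for a subfamily merely demands the consistency of a sub‑union of the partial type appearing in~$(\ast)$ for the full family. Granting these, the cases $m\le 1$ are immediate: for $m=0$ the family $\{X^{\xi_0}_i:i\le n\}$ is a $\sim_0$‑translate of $\{X_0,\dots,X_n\}$; for $m=1$, Lemma~\ref{collapsing is preserved by product} with $\nu=\xi_0$, $\xi=\xi_1$ gives that $\{X^{\xi_0}_i\cup X^{\xi_1}_j:i,j\le n\}$ is collapsible, and its diagonal subfamily $\{\bigcup_{k\le 1}X^{\xi_k}_i:i\le n\}$ is collapsible by the second observation.

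For the inductive step I would first use the first observation to assume $\bigwedge X=\lr$. Then $X$ breaks into its blocks $B_0,\dots,B_r$ (with $r\ge 1$) at coordinate $0$, consecutive in $\lex$, where $B_s=\{k:\xi_k(0)=v_s\}$ for the distinct values $v_0<\dots<v_r$ occurring there. Writing $\xi_k=\la v_s\ra\coc\xi'_k$ for $k\in B_s$, each $\{\xi'_k:k\in B_s\}$ is a finite antichain of size $\le m$, so the induction hypothesis makes $\{U^s_i:i\le n\}$ collapsible, where $U^s_i=\bigcup_{k\in B_s}\xi'_k\coc X_i$; likewise the induction hypothesis applied to the antichain $\{\xi_k:k\in B_1\cup\dots\cup B_r\}$ (also of size $\le m$) makes $\{V_i:i\le n\}$ collapsible, where $V_i=\bigcup_{k\in B_1\cup\dots\cup B_r}\xi_k\coc X_i$. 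Since $\bigcup_{k\le m}X^{\xi_k}_i=\la v_0\ra\coc U^0_i\cup V_i$, the members of $\{U^0_i\}$ (shifted to $\la v_0\ra$) live in $\la v_0\ra\coc\omega^{<\kappa}$ while those of $\{V_i\}$ live in the disjoint region $F:=\bigcup_{1\le s\le r}\la v_s\ra\coc\omega^{<\kappa}$, and $\la v_0\ra\lex\la v_s\ra$ for $s\ge1$.

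It then remains to combine two collapsible families placed in these two disjoint regions, and this I would do by re‑running the proof of Lemma~\ref{collapsing is preserved by product}. That proof invokes the collapsibility of the ``$\xi$‑side'' family and of the ``$\nu$‑side'' family once each, and otherwise uses only (i) that each of the two regions stays strongly indiscernible over the parameters drawn from the other region — which here holds because the regions are disjoint — and (ii) one application of the lifting lemma (Lemma~\ref{lem:lifting}) inside the ``$\nu$‑side'' region, here the clean copy $\la v_0\ra\coc\omega^{<\kappa}$; nothing in that argument is sensitive to whether the two families coincide. The single genuinely new point is that for $r\ge2$ the ``$\xi$‑side'' region $F$ is a disjoint union of copies of $\omega^{<\kappa}$ rather than one such tree; but the $\mathcal{L}_0$‑substructure generated by $\bigcup_i V_i$ contains the meet‑closure of a universal antichain (namely of $\xi_k\coc X_0$ for any $k\in B_1\cup\dots\cup B_r$) and is a substructure of $\omega^{<\kappa}$, so it has the same age as $\omega^{<\kappa}$; hence it is locally finite in the finite language $\mathcal{L}_0$ with a Ramsey age, and the collapsibility hypotheses together with Lemma~\ref{lem:lifting} remain valid for it by exactly the same proofs (Fact~\ref{scow's theorem}). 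This yields that $\{\la v_0\ra\coc U^0_i\cup V_i:i\le n\}=\{\bigcup_{k\le m}X^{\xi_k}_i:i\le n\}$ is collapsible, completing the induction.

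The main obstacle is organizational rather than conceptual: across the induction one has to keep very careful track of which subtrees and sub‑forests of the ambient strongly indiscernible tree remain strongly indiscernible after each round of conditioning on newly introduced parameters, and one must check that replacing $\omega^{<\kappa}$ by these (possibly finitely branching at some nodes) index substructures does not damage the Ramsey‑theoretic input behind Lemma~\ref{lem:lifting}. Once this bookkeeping is in place, every step of the inductive argument is a verbatim re‑run of the first antichain‑collapse lemma.
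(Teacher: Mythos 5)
Your reduction steps (translation invariance via the remark after Definition~\ref{def:collapsible_family}, closure under subfamilies, the base cases, and even the case of exactly two first-coordinate blocks, where a two-family variant of Lemma~\ref{collapsing is preserved by product} with each family sitting inside a single subtree does go through by the same proof) are fine. The gap is in the combining step as soon as the root of the meet-tree has at least three children ($r\ge 2$), so already for the antichain $\{\la 0\ra,\la 1\ra,\la 2\ra\}$: there your $\xi$-side family $\{V_i\}$ is spread over the subtrees $\la v_1\ra\coc\omega^{<\kappa},\ldots,\la v_r\ra\coc\omega^{<\kappa}$, and the proof of Lemma~\ref{collapsing is preserved by product} cannot be re-run for it. First, your claim (i) fails: writing $W_{i_0}=\la v_0\ra\coc U^0_{i_0}\cup V_{i_0}$, a pair of elements of $V_{i_0}$ lying in one block and a pair lying in two different blocks have the same quantifier-free $\mathcal{L}_\delta$-type (both are just incomparable pairs), but for $\gamma\in\la v_0\ra\coc U^0_{i_0}$ the relation $\Delta(\cdot,\cdot,\gamma,\gamma)$ separates them: a cross-block pair has meet equal to the root, which lies below $\gamma$, while a same-block pair has meet extending $\la v_s\ra$ with $s\ge 1$, which does not. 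Hence $\delta$-indiscernibility of $(a_\eta)_{\eta\in W_{i_0}}$ over $bC$ does not yield that $(a_\eta)_{\eta\in V_{i_0}}$ is $\delta$-indiscernible over $bC(a_\eta)_{\eta\in\la v_0\ra\coc U^0_{i_0}}$, which is the first move of the argument you want to copy. Second, and more fundamentally, the induction hypothesis gives collapsibility of $\{V_i\}$ only in the sense of Definition~\ref{def:collapsible_family}, which can be invoked only for a base over which an $\omega^{<\kappa}$-indexed tree containing the $V_i$ in the right positions is strongly indiscernible. In the original lemma this is arranged by passing to the single subtree $\xi\coc\omega^{<\kappa}$, which is strongly indiscernible over $C(a_\eta)_{\eta\in X^\nu_{i_0}}$ and carries the original family as a translate. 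For $r\ge 2$ there is no subtree $\rho\coc\omega^{<\kappa}$ containing all the $V_i$ and avoiding $\la v_0\ra\coc\omega^{<\kappa}$, and the full tree is certainly not strongly indiscernible over a base containing some of its own elements; your appeal to the age, local finiteness and Ramsey property of the substructure generated by $\bigcup_i V_i$ only addresses the Ramsey input to Lemma~\ref{lem:lifting}, not this mismatch, so the invocation of collapsibility of $\{V_i\}$ over the enlarged base is not licensed.

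For comparison, the paper gives no detailed argument (it simply iterates Lemma~\ref{collapsing is preserved by product}), and in the one place the proposition is used (Lemma~\ref{meet closure antichain}) the positions $\chi_\varepsilon$, $\varepsilon\in 2^m$, have a complete binary meet-pattern; for such antichains the naive iteration does work, since at every stage both sides are single subtrees carrying the same family, and one finishes with your (correct) observation that the diagonal subfamily of a collapsible family is collapsible, plus the $\sim_0$-transfer remark. To prove the proposition for an arbitrary finite antichain along your lines you would need either a strengthened inductive statement (a forest version of collapsibility allowing bases that contain the parameters indexed by sibling subtrees, proved by its own induction), or a different reduction: for instance, handle complete binary position-patterns as above and then derive the general case by a position-dropping argument, where, given $b$ with $(a_\eta)_{\eta\in\bigcup_{k\in S}\xi_k\coc X_{i_0}}$ $\delta$-indiscernible over $bC$, one uses universality and Lemma~\ref{lem:lifting} to replace $b$ by some $b'$ with the same type over those parameters such that the full antichain $\bigcup_{k\le m}\xi_k\coc X_{i_0}$ is $\delta$-indiscernible over $b'C$, applies collapsibility of the full-position family, and restricts the resulting type. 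Either way, the combining step needs a genuinely new argument; it is not a verbatim re-run of the first antichain-collapse lemma.
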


Now we prove the second antichain-collapse lemma which says that every collapsible family $\mathcal{F}'$ can be embedded into a collapsible family $\mathcal{F}\cup\{X\}$ where $\mathcal{F}\sim_0 \mathcal{F}'$ and $X$ is an antichain located below $\mathcal{F}$.

\nopagebreak

\begin{lemma}\label{meet closure antichain}{\rm(The 2nd Antichain Collapse)} 
\nopagebreak Let $\kappa$ be a sufficiently large cardinal, and $\{X'_0,$ $... ,$ $X'_n\}$ be a collapsible family in $\omega^{<\kappa}$. Then there is a collapsible family $\{X_0,$ $... ,$ $X_{n+1}\}$ in $\omega^{<\kappa}$ which satisfies that there are $\chi\in X_{n+1}$ and $\chi'\trr\chi$ such that $X_0=\chi'\coc X'_0, ...,$ $X_n=\chi'\coc X'_n$.
\begin{proof}
Choose any $\nu'\in X'_0$. Then there is $\nu\in\omega^{<\kappa}$ such that $\nu'\tri\nu$ and $\nu\not\tri\eta$ for all $\eta\in X'_i$, for all $i\leq n$. Let $X_i=\nu\coc\lor\coc X'_i$ for each $i\leq n$ and $X_{n+1}=X'_0$. Then $X_0,...,X_{n+1}$ are universal antichains with $|X_0|=...=|X_{n+1}|$ and $X_0\sim_0...\sim_0 X_{n+1}$.

We show $X_0,...,X_{n+1}$ are collapsible. Suppose $C\subseteq\mathbb{M}$, $b\in\mathbb{M}$, and $(a_\eta)_{\eta\in\omega^{<\kappa}}$ is a strongly indiscernible tree over $C$. First we assume $(a_\eta)_{\eta\in X_{n+1}}$ is $\delta$-indiscernible over $bC$ and show $\bigcup_{i\leq n+1}p(y,(a_\eta)_{\eta\in X_i})$ is consistent or there is a formula with free variable $y$ witnessing ATP, where $p(y,\bar{z})=\tp(b,(a_\eta)_{\eta\in X_{n+1}}/C)$. Let us assume $\bigcup_{i\leq n+1}p(y,(a_\eta)_{\eta\in X_i})$ is inconsistent. Since $X_{n+1}\sim_0 X_0$, there exists $b'$ such that $\tp(b',(a_\eta)_{\eta\in X_0}/C)=\tp(b,(a_\eta)_{\eta\in X_{n+1}}/C)$. Hence, if $\bigcup_{i\leq n}p(y,(a_\eta)_{\eta\in X_i})$ is inconsistent, then there is a formula with free variable $y$ which witnesses ATP because $(a_\eta)_{\eta\in X_0}$ is $\delta$-indiscernible over $b'C$, and $X_0,...,X_n$ are collapsible. Therefore we suppose that $\bigcup_{i\leq n}p(y,(a_\eta)_{\eta\in X_i})$ is consistent. Then there exists a formula $\varphi(y,\bar{z})$ in $p(y,\bar{z})$ such that $\bigwedge_{i\leq n+1}\varphi(y,\bar{a}_{\bar{\eta}^i})$ is not consistent and $\bigwedge_{i\leq n}\varphi(y,\bar{a}_{\bar{\eta}^i})$ is consistent, where $\bar{\eta}^i$ $=$ $(\eta^i_0,...,\eta^i_k)$ $\in X_i$ for each $i\leq n+1$ and $\bar{\eta}^i\sim_0 \bar{\eta}^j$ for all $i,j\leq n+1$. For each $i\leq n$ and $l\leq k$, there exists $\xi^i_l\in X'_i$ such that $\nu\coc\lor\coc\xi^i_l=\eta^i_l$. We may assume $\xi^{0}_{l}=\eta^{n+1}_{l}$ for all ${l}\leq k$. 
We construct a binary tree $(\bar{a}^{\varepsilon})_{\varepsilon\in\tree}$ by $\bar{a}^{\varepsilon}=(\bar{a}_{\chi_\varepsilon\coc\bar{\xi}^0},...,\bar{a}_{\chi_\varepsilon\coc\bar{\xi}^n})$ where 
\[
\begin{array}{rl}
\chi_\emptyset & \!\!\!\! =\emptyset,\\
\chi_{\varepsilon\coc\lor} & \!\!\!\! = \chi_\varepsilon\coc\nu\coc\lor,\\
\chi_{\varepsilon\coc\llr} & \!\!\!\! = \chi_\varepsilon\coc\nu\coc\llr.
\end{array}
\]
Let $\psi(y,\bar{z}_0,...,\bar{z}_n)=\bigwedge_{i\leq n}\varphi(y,\bar{z}_i)$. Clearly $\{\psi(y,\bar{a}^\varepsilon),\psi(y,\bar{a}^{\varepsilon'})\}$ is inconsistent for all $\varepsilon\trn\varepsilon'$. To show $\psi$ witnesses ATP with $(\bar{a}^\varepsilon)_{\varepsilon\in\tree}$, it is enough to show that $\{\psi(y,\bar{a}^\varepsilon)\}_{\varepsilon\in 2^m}$ is consistent for each $m\in\omega$. 
By Proposition \ref{repeating}, we have that 
\begin{center}
$\{\bigcup_{\varepsilon\in 2^m}\chi_\varepsilon\coc X'_i:i\leq n\}$
\end{center}
is a collapsible family. Note that $\bigwedge_{\varepsilon\in2^m}\varphi(y,\bar{a}_{\chi_\varepsilon\coc\bar{\xi}^0})$ is consistent since 
\[
\varphi(y,\bar{z})\in\tp(b,(a_\eta)_{\eta\in X_{n+1}}/C),
\]
$(a_\eta)_{\eta\in X_{n+1}}$ is $\delta$-indiscernible over $bC$, $(\chi_\varepsilon\coc\bar{\xi}^0)_{\varepsilon\in2^m}$ is a finite antichain, and $X_{n+1}$ is universal. Let $\Sigma(y)$ be
\begin{center}
$\{\bigwedge_{\varepsilon\in 2^m}\varphi(y,\bar{a}_{\chi'_\varepsilon}):\la\chi_\varepsilon\coc\bar{\xi}^0 )_{\varepsilon\in2^m}\sim_\delta(\chi'_\varepsilon)_{\varepsilon\in2^m}\in \bigcup_{\varepsilon\in2^m}\chi_\varepsilon\coc X'_0\}$.
\end{center}
Then by the universality of $X_{n+1}$ again, $\Sigma(y)$ is consistent. Clearly $\Sigma(y)$ is closed under $\sim_\delta$ in $(a_\eta)_{\eta\in \bigcup_{\varepsilon\in2^m}\chi_\varepsilon\coc X'_0}$ over $C$. Thus we can apply Lemma \ref{lem:lifting} to obtain $b''\models\bigwedge_{\varepsilon\in2^m}\varphi(y,\bar{a}_{\chi_\varepsilon\coc\bar{\xi}^0})$ such that $\bigcup_{\varepsilon\in 2^m}\chi_\varepsilon\coc X'_0$ is $\delta$-indiscernible over $b''C$. Thus $\bigcup_{i\leq n}q(y,(a_\eta)_{\eta\in \bigcup_{\varepsilon\in 2^m}\chi_\varepsilon\coc X'_i})$ is consistent where 
\[
q(y,\bar{u})=\tp(b'',(a_\eta)_{\eta\in \bigcup_{\varepsilon\in 2^m}\chi_\varepsilon\coc X'_0}).
\]
In particular, $\{\psi(y,\bar{a}^\varepsilon)\}_{\varepsilon\in 2^m}$ is consistent.  Thus there is a formula with free variable $y$ witnessing ATP.

Now we assume $(a_\eta)_{\eta\in X_{i_0}}$ is $\delta$-indiscernible over $bC$ for some $i_0\leq n$, and show that $\bigcup_{i\leq n+1}p(y,(a_\eta)_{\eta\in X_i})$ is consistent or there is a formula with free variable $y$ witnessing ATP, where $p(y,\bar{z})=\tp(b,(a_\eta)_{\eta\in X_{i_0}})$. Since $X_{i_0}\sim_0 X_{n+1}$, there exists $b'$ such that $\tp(b,(a_\eta)_{\eta\in X_{i_0}})=\tp(b',(a_\eta)_{\eta\in X_{n+1}})$ and $(a_\eta)_{\eta\in X_{n+1}}$ is $\delta$-indiscernible over $b'C$. As we observed above, $\bigcup_{i\leq n+1}p(y,(a_\eta)_{\eta\in X_i})$ is consistent or there is a formula with free variable $y$ witnessing ATP. Hence $X_0,...,X_{n+1}$ are collapsible, as desired.
\end{proof}
\end{lemma}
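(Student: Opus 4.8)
The plan is to imitate, at the level of realizations, the ``$+1$'' in the recursion $\alpha_{n+1}=\alpha_n^2+1$ of Remark~\ref{number of maximal antichains}: the second antichain-collapse lemma should correspond to adjoining one antichain sitting \emph{below} a translated copy of the given family. Concretely I fix $\nu'\in X'_0$ and choose $\nu\in\omega^{<\kappa}$ with $\nu'\tri\nu$ and $\nu$ long enough that $\nu\not\tri\eta$ for every $\eta\in X'_0\cup\cdots\cup X'_n$; then I set $X_i:=\nu\coc\lor\coc X'_i$ for $i\le n$ and $X_{n+1}:=X'_0$. With $\chi:=\nu'\in X_{n+1}$ and $\chi':=\nu\coc\lor$ (so $\chi'\trr\chi$) this has exactly the shape demanded, and $X_i=\chi'\coc X'_i$. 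Since prepending a common prefix is an $\L_0$-embedding of the generated (meet-closed) substructure, each $X_i$ is a universal antichain of the right size and $X_0\sim_0\cdots\sim_0 X_{n+1}$; that part is routine.

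The work is verifying condition~($\ast$) of Definition~\ref{def:collapsible_family}. Given $C$, $b$, a tree $(a_\eta)_{\eta\in\omega^{<\kappa}}$ strongly indiscernible over $C$, and an index $i_0\le n+1$ with $(a_\eta)_{\eta\in X_{i_0}}$ $\delta$-indiscernible over $bC$, I first reduce to $i_0=n+1$: since $X_{i_0}\sim_0 X_{n+1}$ there is $b'$ with $\tp(b',(a_\eta)_{\eta\in X_{n+1}}/C)=\tp(b,(a_\eta)_{\eta\in X_{i_0}}/C)$, which transports the $\delta$-indiscernibility. So assume $(a_\eta)_{\eta\in X_{n+1}}$ is $\delta$-indiscernible over $bC$, put $p(y,\bar z)=\tp(b,(a_\eta)_{\eta\in X_{n+1}}/C)$, and assume $\bigcup_{i\le n+1}p(y,(a_\eta)_{\eta\in X_i})$ is inconsistent (else ($\ast$) holds already). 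If even $\bigcup_{i\le n}p(y,(a_\eta)_{\eta\in X_i})$ is inconsistent, then transporting $p$ onto $X_0$ (via $X_0\sim_0 X_{n+1}$) and applying collapsibility of $\{X_0,\dots,X_n\}$ produces a formula in $y$ witnessing ATP, and we are done. So assume $\bigcup_{i\le n}p(y,(a_\eta)_{\eta\in X_i})$ is consistent; by compactness there is a single $\varphi(y,\bar z)\in p$ and $\sim_0$-matching finite tuples $\bar\eta^0\subseteq X_0,\dots,\bar\eta^{n+1}\subseteq X_{n+1}$ with $\bigwedge_{i\le n+1}\varphi(y,\bar a_{\bar\eta^i})$ inconsistent but $\bigwedge_{i\le n}\varphi(y,\bar a_{\bar\eta^i})$ consistent; writing $\bar\eta^i=\nu\coc\lor\coc\bar\xi^i$ with $\bar\xi^i\subseteq X'_i$ for $i\le n$, the natural identification $X'_0=X_{n+1}\to X_0=\nu\coc\lor\coc X'_0$ makes $\bar\eta^{n+1}=\bar\xi^0$.

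It remains to build a formula in the single variable $y$ witnessing ATP. I take $\psi(y,\bar z_0,\dots,\bar z_n):=\bigwedge_{i\le n}\varphi(y,\bar z_i)$ and define a binary tree $(\bar a^\varepsilon)_{\varepsilon\in\tree}$ by $\bar a^\varepsilon:=(\bar a_{\chi_\varepsilon\coc\bar\xi^0},\dots,\bar a_{\chi_\varepsilon\coc\bar\xi^n})$ with addresses $\chi_\emptyset:=\emptyset$, $\chi_{\varepsilon\coc\lor}:=\chi_\varepsilon\coc\nu\coc\lor$, $\chi_{\varepsilon\coc\llr}:=\chi_\varepsilon\coc\nu\coc\llr$ --- inserting a fresh copy of $\nu$ before every branching makes the meets created by stacking the blocks land precisely at the copies of $\nu\coc\lor$, since $\nu$ is too long to sit below any $\bar\xi^i$. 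Then for $\varepsilon\trn\varepsilon'$ one checks $\cl(\chi_{\varepsilon'}\coc\bar\xi^0,\dots,\chi_{\varepsilon'}\coc\bar\xi^n,\chi_\varepsilon\coc\bar\xi^0)\sim_0\cl(\bar\eta^0,\dots,\bar\eta^n,\bar\eta^{n+1})$, so strong indiscernibility over $C$ forces $\{\psi(y,\bar a^\varepsilon),\psi(y,\bar a^{\varepsilon'})\}$ inconsistent. For the antichain side it suffices to realize $\{\psi(y,\bar a^\varepsilon)\}_{\varepsilon\in A}$ for every finite antichain $A\subseteq\tree$: I apply Proposition~\ref{repeating} to $\{\chi_\varepsilon:\varepsilon\in A\}$, getting that $\{\bigcup_{\varepsilon\in A}\chi_\varepsilon\coc X'_i:i\le n\}$ is collapsible; universality of $X_{n+1}$ together with $\delta$-indiscernibility of $(a_\eta)_{\eta\in X_{n+1}}$ over $bC$ shows $\bigwedge_{\varepsilon\in A}\varphi(y,\bar a_{\chi_\varepsilon\coc\bar\xi^0})$ is consistent, its $\sim_\delta$-closure $\Sigma(y)$ inside $(a_\eta)_{\eta\in\bigcup_{\varepsilon\in A}\chi_\varepsilon\coc X'_0}$ is consistent (universality again) and closed under $\sim_\delta$, so Lemma~\ref{lem:lifting} yields $b''\models\bigwedge_{\varepsilon\in A}\varphi(y,\bar a_{\chi_\varepsilon\coc\bar\xi^0})$ with $(a_\eta)_{\eta\in\bigcup_{\varepsilon\in A}\chi_\varepsilon\coc X'_0}$ $\delta$-indiscernible over $b''C$; feeding $b''$ through the collapsible product family spreads the satisfaction to all indices $i\le n$, giving $\{\psi(y,\bar a^\varepsilon)\}_{\varepsilon\in A}$ consistent (or an ATP-witness surfaces en route, which also closes the case). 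Thus $(\psi,(\bar a^\varepsilon))$ witnesses ATP in the single variable $y$, so ($\ast$) holds.

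I expect the main obstacle to be the address combinatorics in the third paragraph: arranging $\chi_\varepsilon$ so that \emph{every} comparable pair $\varepsilon\trn\varepsilon'$, not only immediate successors, reproduces the strong-isomorphism type of the inconsistent witness $(\bar\eta^0,\dots,\bar\eta^{n+1})$. This is exactly where the choice of $\nu$ as an extension of $\nu'$ long enough to avoid sitting below any element of $X'_0\cup\cdots\cup X'_n$ is essential --- it forces the stacked meets to fall at $\nu\coc\lor$ rather than deeper. A secondary, more bureaucratic difficulty is keeping every auxiliary partial type genuinely closed under $\sim_\delta$ so that Lemma~\ref{lem:lifting} applies at each stage, and tracking which translated copies of the $X'_i$ remain collapsible; Proposition~\ref{repeating}, obtained by iterating the first antichain-collapse lemma, is what does that tracking.
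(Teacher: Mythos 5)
Your proposal is correct and follows essentially the same route as the paper's own proof: the same choice of $\nu$ with $X_i=\nu\coc\lor\coc X'_i$ and $X_{n+1}=X'_0$, the same dichotomy leading to the extraction of a single formula $\varphi$, the same address tree $(\chi_\varepsilon)$ with $\psi=\bigwedge_{i\le n}\varphi(y,\bar z_i)$, and the same combination of Proposition \ref{repeating}, universality of $X_{n+1}$ with $\delta$-indiscernibility over $bC$, and Lemma \ref{lem:lifting} to realize the antichain instances. The only cosmetic deviations are that you verify consistency on arbitrary finite antichains rather than on the levels $2^m$, handle the reduction of the case $i_0\le n$ to $i_0=n+1$ at the outset instead of at the end, and spell out the strong-isomorphism check behind the paper's ``clearly inconsistent'' step.
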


Now we are ready to prove the main statement of this subsection. As in Remark \ref{number of maximal antichains}, $\alpha_m$ denotes the number of all maximal antichains in $2^{<m}$.

\begin{thm}\label{one-varable}
If there exists a witness of ATP, then there exists a witness of ATP in a single free variable.
\begin{proof}
Let $\varphi(x,y;z)$ be a witness of ATP with strongly indiscernible $(a_\eta)_{\eta\in\omega}^{<\kappa}$, where $|a_\eta|=|z|$ for all $\eta\in\omega^{<\kappa}$, $|y|=1$, and $\kappa$ is sufficiently large.
First we construct a collapsible family $\{X_{m,i}\}_{i<\alpha_m}$ and find $f_m:2^{<m}\to\omega^{<\kappa}$ for each $m<\omega\backslash\{0\}$ which satisfy 
\begin{enumerate}
\item[(i)] for each maximal antichain $Y\subseteq 2^{<m}$, there exists a unique $i<\alpha_m$ such that $f_m(Y)\subseteq X_{m,i}$,
\item[(ii)] for any distinct maximal antichains $Y,Y'\subseteq 2^{<m}$, if $f_m(Y)\subseteq X_{m,i}$ and $f_m(Y')\subseteq X_{m,j}$, then $i\neq j$,
\item[(iii)] $f_m$ is a strong embedding, which means $\bar{\eta}\sim_0 f_m(\bar{\eta})$ for all $\bar{\eta}\in2^{<m}$.
\end{enumerate}
Let $X_{1,0}=\omega^\omega$ and $f_1(\emptyset)=0^\omega\in\omega^\omega$. Then $\{X_{1,i}\}_{i<\alpha_1}$ and $f_1$ satisfy the conditions. Let us assume we have constructed $\{X_{m,i}\}_{i<\alpha_m}$ and $f_m:2^{<m}\to\omega^{<\kappa}$. By the first antichain-collapse lemma, 
\[
\Gamma=\{X^\lor_{m,i}\cup X^\llr_{m,j}:i,j<\alpha_m\}
\]
is a collapsible family, where $X^\lor_{m,i}=\lor\coc X_{m,i}$, $X^\llr_{m,j}=\llr\coc X_{m,j}$ for each $i,j\leq \alpha_m$.

Note that $|\Gamma|=\alpha_m^2$ and let $\{X'_{m,i}\}_{i<\alpha_m^2}$ be an enumeration of $\Gamma$. By the second antichain-collapse lemma, there is a collapsible family $\{X_{m+1,i}\}_{i<\alpha_{m+1}}$, which satisfies $X_{m+1,i}=\chi'\coc X'_{m,i}$ for all $i<\alpha_m^2$, for some $\chi\in X_{m+1,\alpha_m^2}$, and $\chi'\trr\chi$. Let $f_{m+1}$ be a map from $2^{<m+1}$ to $\omega^{<\kappa}$ which is defined by
\[
f_{m+1}(\eta)=
\left\{
	\begin{array}{ll}
		\chi  & \mbox{if } \eta=\emptyset\\
		\chi'\coc\lor\coc f_m(\eta') & \mbox{if } \eta=\lor\coc\eta'\\
		\chi'\coc\llr\coc f_m(\eta') & \mbox{if } \eta=\llr\coc\eta'.\\
	\end{array}
\right.
\]
Then $\{X_{m+1,i}\}_{i<\alpha_{m+1}}$ and $f_{m+1}$ satisfy (i),(ii), and (iii).

We first show that for each $m<\omega$, there exists $b_m$ with $|b_m|=|y|$ such that $\varphi(x;y,z)$ satisfies the conditions of ATP with $(b_ma_{f_m(\eta)})_{\eta\in2^{<m}}$. Choose any $i_0<\alpha_m$. Then there exists $b'$ such that $\{\varphi(x,b',a_\eta)\}_{\eta\in X_{m,i_0}}$ is consistent and $(a_\eta)_{\eta\in X_{m,i_0}}$ is $\delta$-indiscernible over $b'$, by Lemma \ref{lem:lifting}. Since $\{X_{m,i}\}_{i<\alpha_m}$ is collapsible, $\bigcup_{i<\alpha_m}p(y,(a_\eta)_{\eta\in X_{m,i}})$ is consistent where $p(y,\bar{z})=\tp(b',(a_\eta)_{\eta\in X_{m,i_0}})$. Choose any $b_m\models\bigcup_{i<\alpha_m}p(y,(a_\eta)_{\eta\in X_{m,i}})$. Then $\varphi(x;y,z)$ witnesses ATP with $(b_ma_{f_m(\eta)})_{\eta\in2^{<m}}$.

So, by compactness, there exist $b$ and a strong embedding $f:2^{<\omega}\to\omega^{<\kappa}$ such that $\varphi(x;y,z)$ witnesses ATP with $(ba_{f(\eta)})_{\eta\in2^{<\omega}}$. By repeating this process we can reduce the number of free variables in the witness of ATP, and finally we obtain a witness of ATP in a single free variable.
\end{proof}
\end{thm}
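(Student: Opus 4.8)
The plan is to strip off the free variables of a witnessing formula one at a time. By Fact~\ref{modeling property} (applied after embedding $2^{<\omega}$ strongly into $\omega^{<\kappa}$ for $\kappa$ large) we may assume the given ATP witness comes with a strongly indiscernible tree $(a_\eta)_{\eta\in\omega^{<\kappa}}$, and we write its free variable as $(x,y)$ with $|y|=1$. One round of the argument should produce a single element $b$ with $|b|=|y|=1$ such that $\varphi(x;y,z)$ — with $y$ absorbed into the parameters — still witnesses ATP with a tree of the form $(ba_{f(\eta)})_{\eta\in 2^{<\omega}}$ for a strong embedding $f\colon 2^{<\omega}\to\omega^{<\kappa}$; iterating then reduces to a single free variable.

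First I would build, by induction on $m$, a collapsible family $\{X_{m,i}\}_{i<\alpha_m}$ in $\omega^{<\kappa}$ together with a strong embedding $f_m\colon 2^{<m}\to\omega^{<\kappa}$ so that $Y\mapsto$ ``the unique $i$ with $f_m(Y)\subseteq X_{m,i}$'' is a bijection from the set of maximal antichains of $2^{<m}$ onto $\{0,\dots,\alpha_m-1\}$ (this is precisely the list of properties (i)--(iii) in the statement). The construction is designed to track the recursion $\alpha_{m+1}=\alpha_m^2+1$ of Remark~\ref{number of maximal antichains}: the successor step applies the First Antichain-Collapse Lemma~\ref{collapsing is preserved by product} along $\lor$ and $\llr$ to manufacture the $\alpha_m^2$ members indexed by pairs (matching the product $\{(\lor\coc X_i)\cup(\llr\coc X_j)\}$), and then the Second Antichain-Collapse Lemma~\ref{meet closure antichain} to append one further member located below everything (matching the extra $\{\emptyset\}$). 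One sets $f_{m+1}(\emptyset)$ to the node $\chi$ produced by the second lemma and prepends $\chi'\coc\lor$, resp.\ $\chi'\coc\llr$, to $f_m$ on the two subtrees. Checking (i)--(iii) is bookkeeping, using that the maximal antichains of $2^{<m+1}$ are exactly the sets $(\lor\coc Y)\cup(\llr\coc Y')$ with $Y,Y'$ maximal in $2^{<m}$, together with $\{\emptyset\}$.

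Next, fixing $m$, I would produce $b_m$ as follows. Choosing any $i_0<\alpha_m$, universality of the antichain $X_{m,i_0}$ and the fact that $\varphi$ witnesses ATP let us apply Lemma~\ref{lem:lifting} — to the $\sim_\delta$-closed partial type generated by the formulas $\exists x\bigwedge_{\eta\in F}\varphi(x,y,a_\eta)$ over finite $F\subseteq X_{m,i_0}$ — to get $b'$ with $\{\varphi(x,b',a_\eta)\}_{\eta\in X_{m,i_0}}$ consistent and $(a_\eta)_{\eta\in X_{m,i_0}}$ $\delta$-indiscernible over $b'$. Collapsibility of $\{X_{m,i}\}_{i<\alpha_m}$ then gives the dichotomy of Definition~\ref{def:collapsible_family}: either some formula in free variable $y$ witnesses ATP — and since $|y|=1$ this is already the desired conclusion — or $\bigcup_{i<\alpha_m}p(y,(a_\eta)_{\eta\in X_{m,i}})$ is consistent for $p(y,\bar z)=\tp(b',(a_\eta)_{\eta\in X_{m,i_0}})$, and we take $b_m$ realizing it. With this $b_m$, the formula $\varphi(x;y,z)$ meets the ATP requirements on $2^{<m}$ with parameters $(b_m a_{f_m(\eta)})_{\eta\in 2^{<m}}$: antichain-consistency comes from enlarging a given antichain of $2^{<m}$ to a maximal one $Y$, observing $f_m(Y)\subseteq X_{m,i}$ for some $i$, and transporting the consistency of $\{\varphi(x,b',a_\eta)\}_{\eta\in X_{m,i_0}}$ to $f_m(Y)$ via universality, $\delta$-indiscernibility, and $b_m\models p(y,(a_\eta)_{\eta\in X_{m,i}})$; incomparable-inconsistency follows because $f_m$ preserves $\trn$ and the original $\varphi(x,y;z)$ already has the inconsistency clause along the whole tree for every value of $y$. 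A compactness argument over all $m$ — realizing over $(a_\eta)_{\eta\in\omega^{<\kappa}}$ the type asserting the existence of $b$ and of a $2^{<\omega}$-indexed configuration whose finite restrictions are strong-embedded copies of $(a_\eta)$ satisfying, together with $b$, the ATP clauses (each finite fragment handled by a large $m$) — yields the desired $b$ and strong embedding $f$, and iterating eliminates the remaining free variables.

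The hard part, in my view, is the inductive construction in the second paragraph: one must keep the bijective matching (i)--(ii) between maximal antichains of $2^{<m}$ and members of the collapsible family perfectly synchronized with the two collapse lemmas while simultaneously maintaining that each $f_m$ is a genuine strong embedding. Granting Lemmas~\ref{lem:lifting}, \ref{collapsing is preserved by product}, \ref{meet closure antichain} and the collapsibility framework of Definition~\ref{def:collapsible_family}, the single-$m$ realization argument and the final compactness gluing are comparatively routine.
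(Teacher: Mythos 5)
Your proposal follows the paper's proof essentially step for step: the same inductive construction of the collapsible families $\{X_{m,i}\}_{i<\alpha_m}$ and strong embeddings $f_m$ synchronized with the recursion $\alpha_{m+1}=\alpha_m^2+1$ via the first and second antichain-collapse lemmas, the same use of Lemma \ref{lem:lifting} and collapsibility to produce $b_m$ realizing $\bigcup_{i<\alpha_m}p(y,(a_\eta)_{\eta\in X_{m,i}})$, and the same compactness-plus-iteration finish. The differences are only presentational (you spell out the $\exists x$-type fed to Lemma \ref{lem:lifting} and the ``ATP in $y$''-branch of Definition \ref{def:collapsible_family}, which the paper leaves implicit), so this is correct and is the paper's argument.
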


Next, we show that NATP is preserved under taking disjunction of formulas.

\begin{lemma}\label{lem:disjunction_NATP}
For formulas $\varphi(x,y)$ and $\psi(x,z)$, if $\varphi\vee\psi$ witnesses ATP, then $\varphi$ witnesses ATP or $\psi$ witnesses ATP.
\begin{proof}
Suppose $\varphi(x,y)\vee\psi(x,z)$ witnesses ATP with strongly indiscernible $(a_\eta b_\eta)_{\eta\in\omega^{<\kappa}}$, where $\kappa$ is sufficiently large. Let $c\models\{\varphi(x,a_\eta)\vee\psi(x,b_\eta)\}_{\eta\in\omega^\omega}$. For each $n\in\omega$, choose any $B_n\subseteq\omega^\omega$ such that $B_n\sim_0 2^n\subseteq \omega^{<\kappa}$. Since $\omega^\omega$ is a Ramsey class with respect to $\L_\delta$, there exists $C_n\subseteq \omega^\omega$ such that $C_n\to(B_n)^A_2$ for each $n\in\omega$, where $A=\{0^{\omega}\}$. Let $f_n:C_n\to 2$ be a coloring which is defined by
\[
f_n(\eta)=
\left\{
	\begin{array}{ll}
		0 & \mbox{if } \models\varphi(c,a_\eta)\\
		1 & \mbox{otherwise}.\\
	\end{array}
\right.
\]
Then there exist $B'_n\subseteq C_n$ and ${k_n}<2$ such that $B'_n\sim_\delta B_n$ and $f_n(\eta)={k_n}$ for all $\eta\in B'_n$. By the pigeon hole principle, there exist ${k}<2$ and an infinite subset $I$ of $\omega$ such that ${k_i=k}$ for all ${i}\in I$. We may assume ${k=0}$.

Now we show that $\varphi$ witnesses ATP with $(a_\eta)_{\eta\in\omega^{<\kappa}}$. If $\eta\trn\nu$, then clearly $\{\varphi(x,a_\eta),$ $\varphi(x,a_\nu)\}$ is inconsistent since $\{\varphi(x,a_\eta)\vee\psi(x,b_\eta),\varphi(x,a_\nu)\vee\psi(x,b_\eta)\}$ is inconsistent. Let $X\subseteq \omega^{<\kappa}$ be an antichain. By compactness we may assume $X$ is finite. Then there exist $n\in\omega$ and $X'\subseteq 2^n$ such that $X\sim_0 X'$. Since $I$ is infinite, we may assume $n\in I$. Since ${k_n=k=0}$, the set $\{\varphi(x,a_\eta)\}_{\eta\in B'_n}$ is consistent. Since $B'_n\sim_\delta B_n\sim_0 2^n$, the set $\{\varphi(x,a_\eta)\}_{\eta\in 2^n}$ is consistent. In particular $\{\varphi(x,a_\eta)\}_{\eta\in X'}$ is consistent, and hence $\{\varphi(x,a_\eta)\}_{\eta\in X}$ is consistent. This shows that $\varphi$ witnesses ATP and completes the proof.
\end{proof}
\end{lemma}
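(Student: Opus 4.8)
The plan is to separate the two disjuncts by realizing the relevant instances of $\varphi\vee\psi$ along a \emph{universal} antichain of the index tree in such a way that the antichain is $\delta$-indiscernible over the realization. The point is that all singletons of $\omega^{\omega}$ have the same quantifier-free $\L_\delta$-type, so over such a realization the truth value of $\varphi$, and that of $\psi$, cannot vary along the antichain; since at each node one of the two disjuncts holds, one of them must hold \emph{uniformly}, and that disjunct inherits the antichain-tree configuration. Note at the outset that the inconsistency clause of ATP comes for free for both formulas: if $\varphi\vee\psi$ is witnessed by a tree $(a_\eta b_\eta)$ and $\eta\trn\nu$, then $\{(\varphi\vee\psi)(x,a_\eta b_\eta),(\varphi\vee\psi)(x,a_\nu b_\nu)\}$ is inconsistent, whence so are $\{\varphi(x,a_\eta),\varphi(x,a_\nu)\}$ and $\{\psi(x,b_\eta),\psi(x,b_\nu)\}$. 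So the entire task is to make one of the two consistency clauses hold along some tree.

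To carry this out I would first pass, exactly as in the proof of Theorem \ref{one-varable}, to a \emph{strongly indiscernible} witness: by compactness together with the strong modeling property there is a strongly indiscernible tree $(a'_\eta)_{\eta\in\omega^{\leq\kappa}}$ with $a'_\eta=a_\eta b_\eta$ that still witnesses ATP of $\varphi\vee\psi$, where $\kappa$ is a sufficiently large limit ordinal. Since $\omega^{\omega}$ is a universal antichain in $\omega^{\leq\kappa}$, the partial type
\[
\Sigma(x):=\{\varphi(x,a_\eta)\vee\psi(x,b_\eta):\eta\in\omega^{\omega}\}
\]
over $(a'_\eta)_{\eta\in\omega^{\omega}}$ is consistent --- it is precisely the set of antichain-instances of $\varphi\vee\psi$ for the antichain $\omega^{\omega}$ --- and it is closed under $\sim_\delta$ in $(a'_\eta)_{\eta\in\omega^{\omega}}$ for the trivial reason that each of its members mentions only the single parameter $a'_\eta$ and any two singletons of $\omega^{\omega}$ are $\sim_\delta$. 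Applying Lemma \ref{lem:lifting} with $D=\emptyset$ and $X=\omega^{\omega}$, I obtain $c\models\Sigma(x)$ such that $(a'_\eta)_{\eta\in\omega^{\omega}}$ is $\delta$-indiscernible over $c$.

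The dichotomy is then immediate. For all $\eta,\eta'\in\omega^{\omega}$ we have $\eta\sim_\delta\eta'$, so $\tp(a'_\eta/c)=\tp(a'_{\eta'}/c)$; in particular whether $\models\varphi(c,a_\eta)$ holds does not depend on $\eta\in\omega^{\omega}$, and the same for $\psi(c,b_\eta)$. Since $c\models\Sigma(x)$, either $\models\varphi(c,a_\eta)$ for all $\eta\in\omega^{\omega}$ or $\models\psi(c,b_\eta)$ for all $\eta\in\omega^{\omega}$; say the former, the other case being symmetric. Then $\{\varphi(x,a_\eta):\eta\in\omega^{\omega}\}$ is consistent, and I claim $\varphi$ witnesses ATP with $(a_\eta)_{\eta\in 2^{<\omega}}$, viewing $2^{<\omega}\subseteq\omega^{\leq\kappa}$. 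The inconsistency clause was observed above. For the consistency clause, let $X\subseteq 2^{<\omega}$ be an antichain; by compactness we may assume $X$ is finite, and by universality of $\omega^{\omega}$ there is $X_0\subseteq\omega^{\omega}$ with $X\sim_0 X_0$; strong indiscernibility of $(a_\eta)_{\eta\in\omega^{\leq\kappa}}$ gives $\bar{a}_X\equiv\bar{a}_{X_0}$ along compatible enumerations, and $\{\varphi(x,a_\eta):\eta\in X_0\}$ is consistent as a subset of $\{\varphi(x,a_\eta):\eta\in\omega^{\omega}\}$, so $\{\varphi(x,a_\eta):\eta\in X\}$ is consistent.

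There is no genuinely hard step here. The only point that takes a little care is the standard reduction to a strongly indiscernible witness indexed by $\omega^{\leq\kappa}$ rather than by $2^{<\omega}$: one inflates the tree by compactness (every finite subtree of $\omega^{\leq\kappa}$ embeds into $2^{<\omega}$ preserving $\tri$ and $\perp$), applies the strong modeling property, and uses that local basing preserves the finitely-many-parameter conditions defining ATP and that every finite $\sim_0$-type over $\omega^{\leq\kappa}$ is already realized in $2^{<\omega}$. Alternatively one can avoid Lemma \ref{lem:lifting} altogether and run a direct $\L_\delta$-Ramsey argument on $\omega^{\omega}$, coloring each node $\eta$ according to whether $\models\varphi(c,a_\eta)$, extracting for each $n$ a monochromatic copy of $2^{n}$, and then pigeonholing the colors over $n\in\omega$; but going through Lemma \ref{lem:lifting} is the cleaner route.
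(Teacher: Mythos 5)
Your argument is correct, and its overall shape is the same as the paper's: realize the disjunction along the universal antichain $\omega^{\omega}$ inside a strongly indiscernible $\omega^{\le\kappa}$-indexed witness, homogenize with respect to which disjunct actually holds at each node, and then transfer consistency to arbitrary finite antichains by universality and strong indiscernibility (the inconsistency clause being inherited trivially, as you note). The difference is in how the homogenization is achieved. The paper keeps the fixed realization $c$ of $\{\varphi(x,a_\eta)\vee\psi(x,b_\eta):\eta\in\omega^{\omega}\}$ and runs the $\L_\delta$-Ramsey argument by hand: for each $n$ it colors nodes of $\omega^{\omega}$ by whether $\models\varphi(c,a_\eta)$, extracts a monochromatic copy of $2^{n}$, and pigeonholes the colors over $n$. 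You instead invoke Lemma \ref{lem:lifting} with $D=\emptyset$ and $X=\omega^{\omega}$ (the hypotheses are satisfied: the type is consistent because $\omega^{\omega}$ is an antichain, and it is closed under $\sim_\delta$ because all singletons of $\omega^{\omega}$ have the same quantifier-free $\L_\delta$-type), obtaining a possibly different realization over which the whole antichain is $\delta$-indiscernible, so that one disjunct holds uniformly with no per-$n$ extraction or pigeonhole. This buys a shorter and more conceptual argument at the cost of routing through the heavier lifting lemma (which itself encapsulates the same Ramsey content); the paper's route is more self-contained and keeps the original realization fixed. Your closing remark correctly identifies the direct coloring-plus-pigeonhole argument as the alternative — that alternative is in fact exactly the proof given in the paper.
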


\noindent Hence, if a theory has quantifier elimination, then to show the theory is NATP we only need to check if there is no witness of ATP in a single free variable, which is the conjunction of atomic formulas and negation of atomic formulas.

\subsection{k-Antichain tree property}

Consider a tree $I^{<\kappa}$ for an ordered set $I$ and a cardinal $\kappa$.

\begin{definition}\label{def:ATP}
Let $T$ be a complete theory. Let $\varphi(x;y)$ be a formula and let $k\ge 2$ be a positive integer. We say that $\varphi(x;y)$ has {the} \textit{$k$-antichain tree property ($k$-ATP)} if there is a tree of parameters $(a_{\eta})_{\eta\in 2^{<\omega}}$ such that 
\begin{itemize}
    \item For any antichain $A\subset 2^{<\omega}$, $\{\varphi(x;a_{\eta}):\eta\in A\}$ is consistent.
    \item For any pairwise comparable distinct elements $\eta_0,\ldots,\eta_{k-1}\in 2^{<\omega}$, $\{\varphi(x;a_{\eta_i}):i<k\}$ is inconsistent.
\end{itemize}
Note that $2$-ATP is ATP. We say that $T$ has $k$-ATP if there is a formula having $k$-ATP. 
\end{definition}

\begin{lemma}\label{lem:k-ATP=ATP}
A complete theory $T$ has $k$-ATP for some $k\ge 2$ if and only if $T$ has ATP
\end{lemma}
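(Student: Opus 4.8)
The direction $\text{ATP} \Rightarrow k\text{-ATP}$ is trivial (if $(a_\eta)$ witnesses ATP via $\varphi$, then any $k$ pairwise comparable elements contain a comparable pair, so their $\varphi$-instances are already inconsistent). The substance is the converse: from a witness of $k$-ATP we must produce a witness of ($2$-)ATP. The plan is to start with a formula $\varphi(x;y)$ and a tree $(a_\eta)_{\eta\in 2^{<\omega}}$ witnessing $k$-ATP, pass to a strongly indiscernible tree $(a_\eta)_{\eta\in \omega^{<\kappa}}$ (or $2^{<\kappa}$) strongly locally based on it via the strong modeling property (Fact \ref{modeling property}), and then build a new formula of the form $\psi(x_0,\dots,x_{k-2};\bar y) = \bigwedge_{i<k-1}\varphi(x_i;y_i)$ — i.e. a $(k{-}1)$-fold "fan" — indexed along a cleverly chosen collection of incomparable substructures. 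The idea is that antichain-consistency of $\varphi$ on antichains transfers to antichain-consistency of $\psi$, while a single comparability in the $\psi$-tree forces $k$ comparable nodes in the $\varphi$-tree, hence inconsistency. Iterating this "reduce $k$ by one" step $k-2$ times lands at $2$-ATP.

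**Key steps.** First, I would fix the strongly indiscernible tree and, by the antichain-collapse machinery (Lemma \ref{lem:lifting} and the consistency arguments already developed), arrange a realization $c$ of $\{\varphi(x;a_\eta):\eta\in X\}$ for a universal antichain $X$ with $(a_\eta)_{\eta\in X}$ $\delta$-indiscernible over $c$. Second, I would design a map $g\colon 2^{<\omega}\to (\omega^{<\kappa})^{k-1}$ assigning to each node $\eta$ a $(k-1)$-tuple $(g(\eta)_0,\dots,g(\eta)_{k-2})$ of nodes that are mutually incomparable, such that: (a) if $\eta\perp\nu$ in $2^{<\omega}$ then all $2(k-1)$ coordinates of $g(\eta),g(\nu)$ are pairwise incomparable (so $\psi$-instances over an antichain reduce to a single $\varphi$-antichain, which is consistent); and (b) if $\eta\trianglelefteq\nu$ then some coordinate of $g(\eta)$ together with some coordinate of $g(\nu)$ — or more precisely, for a chain $\eta_0\trianglelefteq\dots\trianglelefteq\eta_{k-2}$ in the $\psi$-tree, a matching of coordinates — yields $k$ pairwise comparable nodes in $\omega^{<\kappa}$, forcing $\varphi$-inconsistency. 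A convenient concrete choice: let $g(\eta)_j = 0^{j}\coc 1 \coc h(\eta)$ for a strong embedding $h$, so the coordinates "stack" comparably exactly when the $h$-images are comparable, and then a comparable pair in the $\psi$-tree, combined across the $j$-levels, manufactures a comparable $k$-tuple downstairs. Third, verify antichain-consistency for $\psi$ using the consistency-transfer provided by the collapsible-family lemmas (Lemma \ref{collapsing is preserved by product}, Lemma \ref{meet closure antichain}) exactly as in the proof of Theorem \ref{one-varable}: a finite $\psi$-antichain pulls back to a finite $\varphi$-antichain in the strongly indiscernible tree, which is consistent by universality. Finally, iterate: the new witness $\psi$ has $(k-1)$-ATP (the "$k$ comparable implies inconsistent" becomes "$k-1$ comparable implies inconsistent"), and after $k-2$ iterations we reach ATP. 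One then reduces the many free variables to one via Theorem \ref{one-varable} if desired, though for the statement at the level of theories this is unnecessary.

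**Main obstacle.** The delicate point is the combinatorial bookkeeping in step two: I need the coordinate-assignment $g$ to simultaneously (a) keep everything incomparable across incomparable nodes, and (b) guarantee that $j$ distinct comparable nodes in the $\psi$-tree assemble into $k$ distinct pairwise comparable nodes in the base tree — the counting must be tight, since $k-1$ coordinates per node across a chain of length $k-1$ should give roughly $(k-1)^2$ candidate nodes but we must extract a comparable subfamily of size exactly $k$ while ensuring the "$<k$ comparable" case stays consistent. Getting the arithmetic right (and making it compatible with the universal/collapsible-antichain framework so that the consistency transfer actually applies) is where the real work lies; the rest is an application of the modeling property and the antichain-collapse lemmas already in hand. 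An alternative to the explicit construction is to argue more abstractly: apply the strong modeling property, note that $k$-inconsistency of $\varphi$ along chains together with strong indiscernibility pins down, for each node, a uniform "chain-type", and then quote compactness plus the one-variable machinery — but I expect the explicit fan construction to be cleaner to verify.
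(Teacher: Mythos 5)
Your easy direction is fine, but the reduction from $k$-ATP to ATP has a genuine gap, located exactly where you flag the ``main obstacle'', and it is not a bookkeeping issue that tighter counting can repair. With your concrete assignment $g(\eta)_j=0^j\coc 1\coc h(\eta)$, coordinates with distinct indices $j\neq j'$ lie in pairwise incomparable subtrees, so two base nodes coming from different coordinates are \emph{never} comparable, no matter how the indexing nodes $\eta,\nu$ relate; your claim that a comparable pair in the $\psi$-tree ``combined across the $j$-levels, manufactures a comparable $k$-tuple downstairs'' is false. A chain of $\ell$ many $\psi$-nodes produces downstairs only chains of length $\ell$ inside a single coordinate, so from $2$ (or even $k-1$) comparable $\psi$-nodes one can never extract $k$ pairwise comparable $\varphi$-parameters, and the inconsistency half of $(k-1)$-ATP (let alone $2$-ATP) for $\psi$ does not follow from $k$-ATP of $\varphi$. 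The failure is structural: the very feature that makes your consistency transfer work (each block is an antichain, so a $\psi$-antichain pulls back to a $\varphi$-antichain) is what forbids assembling comparable $k$-tuples; if instead a block contains comparable nodes, then $\psi$-antichains no longer pull back to $\varphi$-antichains and their consistency is no longer free. The machinery you invoke (Lemma \ref{lem:lifting}, the antichain-collapse lemmas, Theorem \ref{one-varable}) does not address this tension and is not needed here, and writing $\psi$ with distinct free variables $x_0,\dots,x_{k-2}$ only makes matters worse, since then inconsistency could only ever arise within a single coordinate.

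What is missing is the dichotomy the paper uses (via the fattening and elongation of trees from \cite{CR}), which is precisely designed to escape this trade-off. After making the tree strongly indiscernible by Fact \ref{modeling property}, one reduces $(k+1)$-ATP to $k$-ATP by considering the configurations $K_m=\{\nu^\frown 0^i:\nu\in 2^m,\ i<k\}$, i.e.\ blocks that are chains of length $k$ hanging below the antichain $2^m$. If $\{\varphi(x;a_\eta):\eta\in K_m\}$ is inconsistent for some least $m$, then $\bigwedge_{i<2^m}\varphi(x,y_i)$ together with the $m$-fold fattening of the tree witnesses $k$-ATP. If it is consistent for every $m$, then $\bigwedge_{i<k}\varphi(x,y_i)$ (one shared free variable $x$) together with the $k$-fold elongation witnesses $2$-ATP outright: the elongated blocks are chains, so two comparable elongated nodes yield $2k\ge k+1$ pairwise comparable base parameters and hence inconsistency, while antichain consistency of the elongated tree is exactly what the consistency of the $K_m$'s plus strong indiscernibility supplies. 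Without some such case split on the consistency of these chain-over-antichain configurations, no static block assignment of the kind you propose can meet both requirements simultaneously.
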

\begin{proof}
 Clearly, for any $2\leq k<k'\in\omega$, $k$-ATP implies $k'$-ATP. Thus it is enough to show that for each positive integer $k\geq2$, if $T$ has $(k+1)$-ATP, then it has $k$-ATP.\par 
 Suppose $\varphi(x;y)$ and $(a_\eta)_{\eta\in 2^{<\omega}}$ witness $(k+1)$-ATP for some $2\leq k<\omega$. By the strong modeling property, we may assume $(a_\eta)_{\eta\in 2^{<\omega}}$ is strongly indiscernible.
 
 Let $(a^{(m)}_\eta)_{\eta\in2^{\omega}}$ and $(b_\eta)_{\eta\in 2^{<\omega}}$ be the $m$-fold fattening and $k$-fold elongation of $(a_\eta)_{\eta\in 2^{<\omega}}$ respectively (\cite[Definition 2.6 (3), (5)]{CR}):
 For each $\eta\in2^{<\omega}$ and $n>0$, $a^{(0)}_\eta=a_\eta$ and $a^{(n+1)}_\eta=(a^{(n)}_{0^\frown\eta},a^{(n)}_{1^\frown\eta})$;
 For each $\eta\in2^{\omega}$, $b_\eta$ is a tuple $(a_{\eta'},\ldots,a_{\eta'^\frown0^{k-1}})$ where $\eta'$ is a tuple of length $k(l(\eta)-1)+1$ in $2^{<\omega}$ such that $\eta'(i)=\eta(i/k)$ if $k$ is divisible by $i$ and $\eta'(i)=0$ otherwise;
 
 Define $K_m$ to be the set $\{\nu^\frown0^i:\nu\in2^m,i<k\}$. Either $\bigwedge_{\eta\in K_m}\varphi(x;a_\eta)$ is inconsistent for some $m<\omega$ or consistent for all $m<\omega$.
 If the first case holds, let $m$ be the least one satisfying the condition and $\psi(x;\Bar{y}):=\bigwedge_{i<2^m}\varphi(x,y_i)$. Then $\psi(x,\Bar{y})$ with the $m$-fold fattening $(a^{(m)}_\eta)_{\eta\in2^{\omega}}$ of $(a_\eta)_{\eta\in 2^{<\omega}}$ witnesses $k$-ATP.
 If the second case holds, let $\psi(x;\Bar{y}):=\bigwedge_{i<k}\varphi(x,y_i)$. Then $k$-fold elongation $(b_\eta)_{\eta\in 2^{<\omega}}$ of $(a_\eta)_{\eta\in 2^{<\omega}}$ and $\psi(x;\Bar{y})$ witnesses 2-ATP. Thus in any case, $T$ has $k$-ATP.
\end{proof}

Now, we want to check that the logic in the proof of \cite[Lemma 2.16]{DS} can be applied when we color not only the binary tree $2^{<\kappa}$ but also $I^{<\kappa}$ for some arbitrary countable ordered set $I$. 
We also examine how large the trees or antichain sets can be embedded as a monochromatic subset.

\begin{notation}
Let $I$ be an ordered set. Let $f:I^{<\kappa}\rightarrow X$ be a function. We write $$f(\nu\lhd):=\{x\in X:\exists \mu\in I^{<\kappa}(\nu\lhd\mu\wedge f(\mu)=x)\}.$$
\end{notation}

\begin{remark}\label{rem:tree_monotone_decreasing_coloring}
Let $\kappa$ be a cardinal and $I$ be a countable ordered set. Let $f:I^{<\kappa}\rightarrow X$ be an arbitrary function and let $c:X\rightarrow \lambda$ be a coloring with $\lambda<cf(\kappa)$. There exist $\eta^*\in I^{<\kappa}$ and a color $\theta\in \lambda$ such that for all $\nu \in I^{<\kappa}$ with $\eta^*\trianglelefteq \nu$, there is $\rho \in I^{<\kappa}$ such that
	\begin{itemize}
	\item $\nu\trianglelefteq\rho$,
	\item for all $i\in I$, the set $\{x\in X:x\in f(\rho^{\frown}i\lhd)\wedge c(x)=\theta\}$ is non-empty.
\end{itemize}
\end{remark}
\begin{proof}
Suppose not. 
For each $i<\lambda$, we inductively choose $\nu_i\in I^{<\kappa}$ as follows:

\medskip

\noindent{\it (Initial)} Find $\nu\in I^{<\kappa}$ such that $\{x\in X:x\in f(\nu^{\frown}j\lhd)\wedge c(x)=0\}=\emptyset$ for some $j\in I$. Put $\nu_0:=\nu^\frown j$.

\medskip

\noindent{\it(Successor)} Assume $\nu_i$ is given. Find $\nu\rhd \nu_i$ such that $\{x\in X:x\in f(\nu^{\frown}j\lhd)\wedge c(x)=i+1\}=\emptyset$ for some $j\in I$. Put $\nu_{i+1}:=\nu^\frown j$.

\medskip

\noindent{\it(Limit)} Let $\mu$ be a limit ordinal and assume $\nu_i$ is given for each $i<\mu$. Let $\nu':=\bigcup_{i<\mu}\nu_i$. Find $\nu\rhd \nu'$ such that $\{x\in X:x\in f(\nu^{\frown}j\lhd)\wedge c(x)=\mu\}=\emptyset$ for some $j\in I$. Put $\nu_{\mu}:=\nu^{\frown }j$.

\medskip

Let $\nu^*:=\bigcup_{i<\lambda}\nu_i$. Note that $f(\nu^*)\in \bigcap_{i<\lambda}f(\nu_i\lhd)$ and so $c(f(\nu^*))\neq i$ for all $i<\lambda$, a contradiction.
\end{proof}

\begin{cor}\label{rem:tree_2kappa_coloring}
Let $\kappa$ and $\lambda$ be infinite cardinals and $\lambda<cf(\kappa)$.
\begin{enumerate}
    \item[(a)] Let $f:2^{<\kappa}\rightarrow X$ be an arbitrary function and $c:X\rightarrow \lambda$ be a coloring map. Then there is a monochromatic subset $S\subseteq2^{<\kappa}$ which is strongly isomorphic to $2^{<cf(\kappa)}$.
    \item[(b)] Let $f:2^{\kappa}\rightarrow X$ be an arbitrary function and $c:X\rightarrow \lambda$ be a coloring map. Then there is a monochromatic subset $S\subseteq2^\kappa$ such that for any $k<\omega$, there exists some tuple in $S$ strongly isomorphic to the lexicographic enumeration of $2^k$.
\end{enumerate}

\end{cor}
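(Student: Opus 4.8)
The plan is to derive both statements from Remark~\ref{rem:tree_monotone_decreasing_coloring} by a level-by-level tree recursion. For (a) the remark applies to $f,c$ directly (with $I=2$); for (b) the function $f$ lives on branches rather than on nodes, so I would first pass to an auxiliary node-coloring and apply the remark to that.

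For (a): I would apply Remark \ref{rem:tree_monotone_decreasing_coloring} to $f$ and $c$ to get $\eta^*\in 2^{<\kappa}$ and $\theta\in\lambda$, and then record the consequence I actually need:
\[
(\diamond)\qquad\text{for every }\nu\text{ with }\eta^*\tri\nu\text{ there is }\mu\in 2^{<\kappa}\text{ with }\nu\tri\mu\text{ and }c(f(\mu))=\theta,
\]
which is immediate because the remark furnishes $\rho\trr\nu$ with $f(\rho\coc 0\lhd)$ meeting the colour class $\theta$, and any witness $\mu$ of that lies above $\rho\coc 0\trr\nu$. Next I would build $s_t\in 2^{<\kappa}$ for $t\in 2^{<cf(\kappa)}$ by recursion on $l(t)$, keeping $\eta^*\tri s_t$, $c(f(s_t))=\theta$, and $s_{t\res\beta}\tri s_t$ for all $\beta\le l(t)$: take $s_\lr$ from $(\diamond)$ with $\nu=\eta^*$; at successors take $s_{t\coc i}$ from $(\diamond)$ with $\nu=s_t\coc i$; at a limit $\alpha<cf(\kappa)$ first form $\bar s_t:=\bigcup_{\beta<\alpha}s_{t\res\beta}$, note that $\alpha<cf(\kappa)$ forces $l(\bar s_t)<\kappa$ so $\bar s_t\in 2^{<\kappa}$ and $\eta^*\tri\bar s_t$, and take $s_t$ from $(\diamond)$ with $\nu=\bar s_t$. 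Then $S:=\{s_t:t\in 2^{<cf(\kappa)}\}$ is monochromatic of colour $\theta$, and I would check that $t\mapsto s_t$ is a strong embedding: comparable $t$'s give comparable $s_t$'s by construction, and for $t\perp t'$ with $u:=t\wedge t'$ the nodes $s_t,s_{t'}$ extend $s_u$ through its two distinct immediate successors, so $s_t\wedge s_{t'}=s_u=s_{t\wedge t'}$ and the $<_{lex}$-order is preserved.

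For (b): for each $\eta\in 2^{<\kappa}$ I would fix some $b_\eta\in 2^\kappa$ with $\eta\tri b_\eta$ and set $f'(\eta):=f(b_\eta)$; applying Remark \ref{rem:tree_monotone_decreasing_coloring} to $f'$ and $c$ gives $\eta^*\in 2^{<\kappa}$ and $\theta\in\lambda$ such that for every $\nu$ with $\eta^*\tri\nu$ there is $\rho$ with $\nu\tri\rho$ and
\[
(\clubsuit)\qquad\text{for each }i<2\text{ there is }\beta\in 2^\kappa\text{ with }\rho\coc i\tri\beta\text{ and }c(f(\beta))=\theta,
\]
since a colour-$\theta$ point of $f'(\rho\coc i\lhd)$ has the form $f(b_\mu)$ with $\mu\trr\rho\coc i$. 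I would then recurse over $2^{<\omega}$: pick $\rho_\lr$ from $(\clubsuit)$ with $\nu=\eta^*$ and $\rho_{t\coc i}$ from $(\clubsuit)$ with $\nu=\rho_t\coc i$, so that $t\mapsto\rho_t$ is a strong embedding of $2^{<\omega}$ into $2^{<\kappa}$ with each $\rho_t\trr\eta^*$ retaining property $(\clubsuit)$. For each $t\in 2^{<\omega}$ choose $\beta_t\in 2^\kappa$ with $\rho_t\coc 0\tri\beta_t$ and $c(f(\beta_t))=\theta$, and put $S:=\{\beta_t:t\in 2^{<\omega}\}$; it is monochromatic of colour $\theta$. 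Finally, for a fixed $k<\omega$ and $t,t'\in 2^k$ with $u:=t\wedge t'$, one has $\beta_t\trr\rho_t\trr\rho_{t\res(l(u)+1)}\trr\rho_u\coc t(l(u))$ and likewise $\beta_{t'}\trr\rho_u\coc t'(l(u))$, so $\beta_t\wedge\beta_{t'}=\rho_u$ and $\beta_t\lex\beta_{t'}\Leftrightarrow t\lex t'$; together with $\rho_u\wedge\rho_{u'}=\rho_{u\wedge u'}$ for $u,u'\in 2^{<k}$ this shows the sub-tuple $(\beta_t)_{t\in 2^k}$, listed lexicographically, is strongly isomorphic to the lexicographic enumeration of $2^k$.

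The step I expect to be the main obstacle is the limit-level bookkeeping in (a): one cannot simply take $\bar s_t=\bigcup_{\beta<\alpha}s_{t\res\beta}$ as the new node, since this union need not have colour $\theta$, and re-extending it via $(\diamond)$ then requires checking that the part of the tree already built is not disturbed --- which works because the re-extension stays above $\bar s_t$ while every relevant branching was already fixed at an earlier successor level. Once the branch-coloring has been traded for the node-coloring $f'$, part (b) is a routine countable recursion.
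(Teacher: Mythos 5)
Your proof is correct. Part (a) is essentially the paper's own argument: apply Remark \ref{rem:tree_monotone_decreasing_coloring}, extract the fact that every node above $\eta^*$ has a colour-$\theta$ node above it, and run the recursion along $2^{<cf(\kappa)}$, taking unions at limit levels (which stay in $2^{<\kappa}$ because the level is below $cf(\kappa)$) and then re-extending to a colour-$\theta$ node; your limit-level worry and its resolution are exactly what the paper does. For part (b) you take a slightly different route: the paper reduces (b) to (a) by pulling the branch-colouring back along the canonical section $\eta\mapsto\eta\coc 0^{\kappa}$ of $2^{<\kappa}$ into $2^{\kappa}$ and then pushing the monochromatic copy of $2^{<cf(\kappa)}$ forward (incomparability of the chosen nodes guarantees the meets are preserved), whereas you pull back along arbitrarily chosen branches $b_\eta\trr\eta$, apply the Remark directly rather than quoting (a), and rebuild only a countable tree of nodes $\rho_t$ together with branch witnesses $\beta_t\trr\rho_t\coc 0$. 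Both arguments are sound and rest on the same idea of trading the colouring of branches for a colouring of nodes; the paper's version is choice-free in the section, reuses (a), and produces a monochromatic set indexed by all of $2^{<cf(\kappa)}$ (of eventually-zero branches), while yours produces only the countable family $(\beta_t)_{t\in 2^{<\omega}}$, which is all the statement requires, at the cost of an inessential use of choice and of redoing the recursion; your verification that $\beta_t\wedge\beta_{t'}=\rho_{t\wedge t'}$ and that lexicographic order is preserved is the right check for the strong isomorphism with the lexicographic enumeration of $2^k$.
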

\begin{proof}
(a) By remark \ref{rem:tree_monotone_decreasing_coloring}, we can obtain $\eta^\ast\in2^{<\kappa}$ and $\theta\in \lambda$ such that for all $\nu \in 2^{<\kappa}$ with $\eta^*\trianglelefteq \nu$, there is $\rho \in 2^{<\kappa}$ such that
	\begin{itemize}
	\item $\nu\trianglelefteq\rho$,
	\item for all $i<2$, the set $\{x\in X:x\in f(\rho^{\frown}i\lhd)\wedge c(x)=\theta\}$ is non-empty.
\end{itemize}
We inductively define $(\xi_\eta)_{\eta\in2^{<\alpha}}$ for each $\alpha<cf(\kappa)$ as follows:

\medskip

\noindent{\it (Initial)} Take $\nu=\eta^\ast$ and let $\rho_{\langle\rangle}$ be a node in $2^{<\kappa}$ satisfying the conditions above. Then there is $\xi_{\langle\rangle}\in2^{<\kappa}$ with $\rho_{\langle\rangle}\unlhd\xi_{\langle\rangle}$ such that $c(f(\xi_{\langle\rangle}))=\theta$. Fix $\xi_{\langle\rangle}$.

\medskip

\noindent{\it(Successor)} Suppose $(\xi_\eta)_{\eta\in2^{\leq\alpha}}$ is already defined. To define $(\xi_\eta)_{\eta\in2^{\leq\alpha +1}}$, it is enough to find
     $\xi_{\eta^\frown i}$ for each $\eta\in 2^\alpha$, $i<2$. Take $\nu=\xi_{\eta}^\frown i$ and find $\rho_{\eta^\frown i}$ same as in the initial part. Then there is $\xi_{\eta^\frown i}\in2^{<\kappa}$ with $\rho_{\eta^\frown i}\unlhd\xi_{\eta^\frown i}$ such that $c(f(\xi_{\eta^\frown i}))=\theta$. Fix $\xi_{\eta^\frown i}$.

\medskip

\noindent{\it(Limit)} Let $\mu$ be a limit ordinal and suppose $(\xi_\eta)_{\eta\in2^{\leq\mu}}$ is already defined. To find $\xi_\eta$ with $\eta\in 2^\mu$, take $\nu=\bigcup_{\tau\unlhd\eta}\xi_\tau$ and follow the same process. Note $\bigcup_{\tau\unlhd\eta}\xi_\tau$ is in $2^{<\kappa}$ as $\lambda<cf(\kappa)$.

\medskip

Then $S=\{\xi_\eta: \eta\in2^{<cf(\kappa)}\}$ is the desired set.

\medskip

(b)
Let $f':2^{<\kappa}\rightarrow 2^\kappa$ be the function such that $f'(\eta)=\eta^\frown 0^\kappa$ and let $c'$ be the composite function of $f$ and $c$. 
Applying (a) on $f'$ and $c'$, we obtain $S'\subseteq2^{<\kappa}$ and $\theta\in \lambda$ such that $S'\sim_0 2^{<cf(\kappa)}$ and for all $\nu \in S'$, $c'(f'(\nu))=\theta$. Then $S:=\{f'(\nu):\nu\in S'\}$ is the desired set since $cf(\kappa)$ is infinite.
\end{proof}

\begin{remark}\label{rem:arbitrary_configuration_embedding_ATP}
Let $\varphi(x;y)$ be a formula having ATP. For any cardinal $\mu$ and any antichain $A\subset 2^{<\mu}$, there is a cardinal $\mu'$ and there is a strongly indiscernible $(a_{\eta})_{\eta\in 2^{\le \mu'}}$ witnessing ATP of $\varphi$, a parameter $b$, and $S\subset 2^{\mu'}$ such that
\begin{itemize}
    \item for all $\eta'\in S$, $\models \varphi(b,a_{\eta'})$,
    \item for all $\eta_1,\eta_2\in S$, $a_{\eta_1}\equiv_b a_{\eta_2}$,
    \item there is a strongly isomorphism $\iota:A\rightarrow S$.
\end{itemize}
\end{remark}
\begin{proof}
Fix a cardinal $\mu$ and an antichain $A\subset 2^{<\mu}$. Let $\mu'$ be a cardinal with $cf(\mu')>\mu+2^{|T|}$. Take a strongly indiscernible $(a_{\eta})_{\eta\in 2^{\le\mu'}}$ witnessing ATP of $\varphi$. 

Let $b\models \{\varphi(x;a_{\eta}):\eta\in 2^{\mu'}\}$ and let $X:=2^{\mu'}$. Consider a function $f:2^{<\mu'}\rightarrow X,\eta\mapsto \eta^{\frown}0^{\mu'}$ and a coloring $c:X\rightarrow S_y(b),\eta\mapsto \tp(a_{\eta}/b)$. Note that $|S_y(b)|\le 2^{|T|}$. By Remark \ref{rem:tree_monotone_decreasing_coloring}, there is $\eta^*\in 2^{<\mu'}$ and $p\in S_y(b)$ such that for all $\nu\in 2^{<\mu'}$ with $\eta^*\lhd \nu$, there is $\rho\in 2^{<\mu'}$ such that
\begin{itemize}
    \item $\nu\lhd \rho$,
    \item for $i=0,1$, there is $\rho^{\frown}\lhd \rho'\in 2^{<\mu'}$ such that $a_{f(\rho')}\models p$.
\end{itemize}
Since $cf(\mu')>\mu+2^{|T|}\geq \mu$, there is an antichain $S\subset 2^{\mu'}$ isomorphic to $A$ such that
\begin{itemize}
    \item for all $\eta'\in S$, $\models \varphi(b,a_{\eta'})$,
    \item for all $\eta_1,\eta_2\in S$, $a_{\eta_1}\equiv_b a_{\eta_2}$.
\end{itemize}
This completes the proof.
\end{proof}

\begin{thm}\label{thm:indisc_ext_NATP}
 Assume $\kappa$ and $\kappa'$ are cardinals such that $2^{|T|}<\kappa<\kappa'$ and cf$(\kappa)=\kappa$. The following are equivalent.
\begin{enumerate}
    \item $T$ is NATP.
    \item For any strongly indiscernible tree $(a_\eta)_{\eta\in2^{<\kappa'}}$ and finite tuple $b$, there are some $\rho\in2^\kappa$ and $b'$ such that
     \begin{enumerate}
         \item $(a_{\rho^\frown0^i})_{i<\kappa'}$ is indiscernible over $b'$,
         \item $b\equiv_{a_\rho}b'$.
     \end{enumerate}
\end{enumerate}
\end{thm}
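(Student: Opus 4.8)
The plan is to prove the two implications separately, using the combinatorial machinery developed in this section.

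For the direction $(2)\Rightarrow(1)$, I would argue by contraposition: assume $T$ has ATP and produce a strongly indiscernible tree and a finite tuple $b$ witnessing the failure of $(2)$. By Theorem~\ref{one-varable} we may take a witness $\varphi(x;y)$ of ATP in a single free variable. Using Remark~\ref{rem:arbitrary_configuration_embedding_ATP} (applied with a sufficiently large antichain, e.g.\ one strongly isomorphic to $2^{\kappa'}$ restricted suitably, or simply to a maximal antichain of the relevant size), I obtain a strongly indiscernible tree $(a_\eta)_{\eta\in2^{\le\mu'}}$ witnessing ATP of $\varphi$, a parameter $b$, and $S\subseteq 2^{\mu'}$ all of whose elements satisfy $\varphi(b,-)$ with the same type over $b$. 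The key point is that for such a tree no $\rho\in2^\kappa$ can have $(a_{\rho^\frown0^i})_{i<\kappa'}$ indiscernible over any $b'$ with $b'\equiv_{a_\rho}b$: the elements $a_{\rho^\frown 0^i}$ lie along a \emph{comparable} chain, and indiscernibility plus $\models\varphi(b',a_\rho)$ (which follows from $b'\equiv_{a_\rho}b$ once we arrange $\rho\in S$, or rather $\rho$ extending a node of $S$) would force $\{\varphi(x,a_{\rho^\frown 0^i}):i<\kappa'\}$ to be consistent, contradicting clause~(ii) of ATP, which makes two comparable nodes inconsistent. The mild technical care needed here is to ensure that the $\rho$ produced by~(2) actually does satisfy $\models\varphi(b',a_\rho)$; this is where one feeds in that $b'\equiv_{a_\rho}b$ and that $b$ realizes $\varphi(x,a_\rho)$ for the relevant $\rho$, using the freedom to choose the strongly indiscernible tree so that a path-initial-segment node carries the right type.

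For the direction $(1)\Rightarrow(2)$, suppose $T$ is NATP and fix a strongly indiscernible $(a_\eta)_{\eta\in2^{<\kappa'}}$ and a finite tuple $b$. The idea is to find the path $\rho$ by a coloring/tree-pruning argument of the kind established in Remark~\ref{rem:tree_monotone_decreasing_coloring} and Corollary~\ref{rem:tree_2kappa_coloring}. Concretely, consider the function $f:2^{<\kappa'}\to 2^{\kappa'}$, $\eta\mapsto\eta^\frown0^{\kappa'}$ and the coloring $c$ sending $\eta^\frown 0^{\kappa'}$ to $\tp(a_{\eta^\frown0^{\kappa'}}/b)\in S_y(b)$; since $2^{|T|}<\kappa=\mathrm{cf}(\kappa)$ and $|S_y(b)|\le 2^{|T|}$, Corollary~\ref{rem:tree_2kappa_coloring}(b) (or directly Remark~\ref{rem:tree_monotone_decreasing_coloring}) yields a monochromatic set; from it one extracts $\rho\in2^\kappa$ such that the nodes $(a_{\rho^\frown 0^i})_{i<\kappa'}$ all realize the \emph{same} type over $b$, say $p(y)$. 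Now set $b'$ to realize $p$ at the coordinate $a_\rho$ and more: using compactness and the assumption NATP, I want to extend $p$ to a type making $(a_{\rho^\frown 0^i})_{i<\kappa'}$ \emph{fully} indiscernible over $b'$ while keeping $b'\equiv_{a_\rho}b$. The mechanism is that if no such $b'$ existed, then the obstruction would be a formula exhibiting inconsistency along a chain together with consistency along antichains — precisely the ATP configuration — contradicting NATP. One needs to package this carefully: take the partial type over the chain $C=(a_{\rho^\frown 0^i})_{i<\kappa'}$ asserting $\tp(-/a_\rho)=\tp(b/a_\rho)$, and show its ``indiscernible completion'' is consistent; failure produces, via strong indiscernibility of the ambient tree and the standard tree-surgery building a binary tree out of a chain, a formula with ATP.

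\textbf{Main obstacle.} I expect the hard part to be the $(1)\Rightarrow(2)$ direction, specifically the step producing $b'$ over which the chain is indiscernible: one must convert ``no indiscernible completion'' into an actual ATP witness. This requires carefully building a tree-indexed family of parameters from the chain $(a_{\rho^\frown 0^i})_i$ and its conjugates — using strong indiscernibility of $(a_\eta)_{\eta\in2^{<\kappa'}}$ to move comparable initial segments around — so that inconsistency along a branch of this new tree (coming from the failure of indiscernibility of the chain over $b$-conjugates) matches clause~(ii) of ATP, while consistency along antichains comes for free from the consistency of the partial type $\{\varphi(x,a_\eta):\eta\in A\}$ for antichains $A$ in the original tree. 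Getting the indices to line up — so that an antichain in the new index set pulls back to an antichain (not a chain) in the old one, and a chain pulls back to a chain — is the delicate bookkeeping, and is the analogue of the path/antichain role-exchange emphasized in Remark~\ref{rem:ATP_vs_SOP2}.
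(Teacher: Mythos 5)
Your overall architecture matches the paper's proof (coloring to choose $\rho$, a consistency claim for the chain-type refuted via an ATP configuration, then Ramsey/compactness to get $b'$), but the $(1)\Rightarrow(2)$ direction has a genuine gap at its crux. First, your coloring is set up on nodes $\eta^\frown 0^{\kappa'}$, which are not in the index set $2^{<\kappa'}$, and the conclusion you extract from it --- that the chain $(a_{\rho^\frown 0^i})_{i<\kappa'}$ is monochromatic over $b$ --- is neither what the coloring can give nor what the argument needs. What is needed is a set $S\subseteq 2^\kappa$ (these nodes do lie in the tree, as $\kappa<\kappa'$) such that all $a_\eta$, $\eta\in S$, have the same type over $b$ \emph{and} $S$ contains, for every $k<\omega$, a tuple strongly isomorphic to the lexicographic enumeration of $2^k$; this is exactly what Corollary \ref{rem:tree_2kappa_coloring}(b) provides. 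Its role is to repair the step you declare ``comes for free'': you claim consistency of $\{\varphi(x,a_\eta):\eta\in A\}$ for antichains $A$ ``from the consistency of the partial type for antichains in the original tree,'' but the original tree is an \emph{arbitrary} strongly indiscernible tree and no such consistency is hypothesized --- it must be manufactured. Taking $\rho\in S$ and $p(x,y)=\tp(b,a_\rho)$, if $q(x)=\bigcup_{i<\kappa'}p(x,a_{\rho^\frown 0^i})$ were inconsistent, some $\varphi\in p$ would be $k$-inconsistent along the chain; since $b$ realizes $\varphi(x,a_\eta)$ simultaneously for all $\eta\in S$ and every finite antichain is strongly isomorphic to a subtuple of a lex-enumerated level $2^k$ (pad with zeros), strong indiscernibility yields consistency of $\varphi$ on all finite antichains, so the original tree itself witnesses $k$-ATP (then use Lemma \ref{lem:k-ATP=ATP}). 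In particular, the ``tree surgery building a binary tree out of a chain'' you anticipate as the main obstacle is not needed, and by itself it would not produce the antichain-consistency half of ATP without the monochromatic $S$.

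In the $(2)\Rightarrow(1)$ direction your worry about ensuring $\models\varphi(b',a_\rho)$ is legitimate, but your fix via Remark \ref{rem:arbitrary_configuration_embedding_ATP} does not close it: the $\rho$ handed back by $(2)$ is an arbitrary element of $2^\kappa$, while the set $S$ of that remark lives at the top level and need not control $a_\rho$. The clean solution is to make the ATP tree strongly indiscernible and choose $b\models\{\varphi(x,a_\eta):\eta\in 2^\kappa\}$, which is consistent because the entire level $2^\kappa$ is an antichain; then for whatever $\rho\in 2^\kappa$ and $b'$ are produced, $b\equiv_{a_\rho}b'$ gives $\models\varphi(b',a_{\rho^\frown 0^0})$, and indiscernibility of $(a_{\rho^\frown 0^i})_{i<\kappa'}$ over $b'$ forces $b'$ to satisfy $\varphi(x,a_{\rho^\frown 0^i})$ for all $i$, contradicting inconsistency along comparable nodes. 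Neither Theorem \ref{one-varable} nor Remark \ref{rem:arbitrary_configuration_embedding_ATP} is needed for this direction; your final Ramsey/compactness step for extracting $b'$ in $(1)\Rightarrow(2)$ is fine once the consistency claim is actually established.
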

\begin{proof}
(1$\Rightarrow$2) Suppose $T$ is NATP. Let $(a_\eta)_{\eta\in2^{<\kappa'}}$ and $b$ be given. Define $f:2^\kappa\rightarrow S(b), \eta\mapsto\tp(a_\eta/b)$ and apply Corollary \ref{rem:tree_2kappa_coloring} (b), then there is $S\subseteq 2^\kappa$ such that 
\begin{enumerate}
    \item[(i)] for any $\eta,\nu\in S$, $a_\eta\equiv_b a_\nu$, and
    \item[(ii)] for any $k<\omega$, there exists some tuple in $S$ which is strongly isomorphic to the lexicographic enumeration of $2^k$.
\end{enumerate}
Let $\rho$ be an element in $S$. Define $p(x,y):=$tp$(b,a_\rho)$ and $q(x):=\bigcup_{i<\kappa'}p(x,a_{\rho^\frown0^i})$. We claim $q$ is consistent.\par
Suppose not. Then by compactness and strong indiscernibility, there is $\varphi(x,y)\in p(x,y)$ such that $\{\varphi(x,a_{\rho^\frown0^i}):i<\kappa'\}$ is $k$-inconsistent for some $k<\omega$. On the other hand,  $b$ satisfies $\bigcup_{\eta\in S}p(x,a_\eta)$ by (i), so $b$ also satisfies $\bigwedge_{\eta\in S}\varphi(x,a_\eta)$. Therefore, by (ii) and strong indiscernibility, $(a_\eta)_{\eta\in2^{<\kappa'}}$ and $\varphi$ witness $k$-ATP, which is a contradiction.\par
By Ramsey, compactness, and the claim, there exists $b'\models q(x)$ such that $(a_{\rho^\frown0^i})_{i<\kappa'}$ is indiscernible over $b'$. Note $b'\models q(x)$ implies tp$(ba_\rho)=p=$tp$(b'a_\rho)$.\par

(2$\Rightarrow$1) Suppose $T$ has ATP, witnessed by $(a_\eta)_{\eta\in2^{<\kappa'}}$ and $\varphi(x,y)$. We may assume $(a_\eta)_{\eta\in2^{<\kappa'}}$ is strongly indiscernible.\par
Take a realization $b$ of $\bigwedge_{\eta\in2^\kappa}\varphi(x,a_\eta)$. Then we can find $\rho$ and $b'$ such that $(a_{\rho^\frown0^i})_{i<\kappa'}$ is indiscernible over $b'$ and $b\equiv_{a_\rho}b'$. 
Since $b\models \varphi(x,a_\rho)$, we have $b'\models \varphi(x,a_\rho)$. However, by indiscernibility, $b'$ realizes all the formulas $\varphi(x,a_{\rho^\frown0^i})$ for $i<\kappa$, which contradicts that $\varphi$ has ATP.
\end{proof}

\begin{cor}\label{cor:indisc_ext_NATP}
Let $\kappa$ and $\kappa'$ be cardinals in Theorem \ref{thm:indisc_ext_NATP}. Let $T$ be NATP and a strongly indiscernible tree $(a_\eta)_{\eta\in2^{<\kappa'}}$ and a tuple $b$ is given.
Then for any $S\subseteq 2^\kappa$ satisfying that
\begin{enumerate}
    \item[(i)] for any $\eta,\nu\in S$, $a_\eta\equiv_b a_\nu$, and
    \item[(ii)] for any $k<\omega$, there exists some tuple in $S$ which is strongly isomorphic to the lexicographic enumeration of $2^k$,
\end{enumerate}
and for any $\rho\in2^\kappa$, there is $b'$ satisfying
     \begin{enumerate}
         \item $(a_{\rho^\frown0^i})_{i<\kappa'}$ is indiscernible over $b'$,
         \item $b\equiv_{a_\rho}b'$.
     \end{enumerate}
\end{cor}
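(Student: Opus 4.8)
The plan is to re-run the proof of Theorem~\ref{thm:indisc_ext_NATP}, (1)$\Rightarrow$(2), almost word for word, after observing that the only features of the set $S$ used in that proof are properties (i) and (ii) above, and that the only feature of $\rho$ used there is that $a_\rho\equiv_b a_\eta$ for all $\eta\in S$ (which holds when $\rho\in S$). So I would fix such an $S$ and such a $\rho$, put $p(x,y):=\tp(b,a_\rho)$ and $q(x):=\bigcup_{i<\kappa'}p(x,a_{\rho^\frown0^i})$ --- a partial type, since $\rho=\rho^\frown0^0$ --- and try to produce $b'\models q$ over which $(a_{\rho^\frown0^i})_{i<\kappa'}$ is indiscernible; then $p(x,a_\rho)\subseteq q$ gives (b) and the indiscernibility gives (a).

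The heart of the matter is the consistency of $q$. Suppose it fails. Then by compactness and strong indiscernibility I get $\varphi(x,y)\in p$ and $k<\omega$ with $\{\varphi(x,a_{\rho^\frown0^i}):i<\kappa'\}$ being $k$-inconsistent, and since any two strictly increasing $k$-chains in $2^{<\kappa'}$ have the same quantifier-free $\mathcal{L}_0$-type, $\varphi$ is $k$-inconsistent along \emph{every} $k$-chain. On the other hand $\varphi\in p$ gives $\models\varphi(b,a_\rho)$, hence $\models\varphi(b,a_\eta)$ for all $\eta\in S$ by (i); using (ii), $b$ realizes $\bigwedge_{\eta\in A_k}\varphi(x,a_\eta)$ for a subset $A_k\subseteq S$ strongly isomorphic to the level $2^k$, and since every finite antichain of $2^{<\omega}$ strongly embeds into some level, strong indiscernibility upgrades this to: $\{\varphi(x,a_\eta):\eta\in A\}$ is consistent for every finite antichain $A$. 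Restricting $(a_\eta)$ to a strong copy of $2^{<\omega}$, $\varphi$ then has $k$-ATP, contradicting NATP by Lemma~\ref{lem:k-ATP=ATP}. So $q$ is consistent.

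Finally I would extract $b'$ exactly as in Theorem~\ref{thm:indisc_ext_NATP}: the chain $(a_{\rho^\frown0^i})_{i<\kappa'}$ is order-indiscernible over $\emptyset$ and $\kappa'>2^{|T|}$, so a finite fragment of $q(x)$ together with finitely many Ehrenfeucht--Mostowski conditions for $(a_{\rho^\frown0^i})_i$ is satisfiable --- realize $q$ by some $c$, pass to a long enough $c$-indiscernible sub-chain, and conjugate by an automorphism carrying it onto an initial segment of $(\rho^\frown0^i)_i$ --- whence by compactness there is $b'\models q$ with $(a_{\rho^\frown0^i})_{i<\kappa'}$ indiscernible over $b'$. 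I do not expect a genuinely new obstacle: the whole point is that the argument of Theorem~\ref{thm:indisc_ext_NATP} was never sensitive to which $S$ (satisfying (i),(ii)) or which such $\rho$ we feed it. The only spots needing a little care are in the middle paragraph: checking that $k$-inconsistency along one chain propagates to all $k$-chains, and that $b$-consistency on the levels $A_k$ upgrades to $b$-consistency on arbitrary finite antichains via the embedding of finite antichains into levels.
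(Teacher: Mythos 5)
Your proposal is correct and is exactly the paper's intended argument: the printed proof of this corollary is simply ``clear by the proof of Theorem \ref{thm:indisc_ext_NATP}'', i.e.\ one observes, as you do, that the (1)$\Rightarrow$(2) argument only uses properties (i)--(ii) of $S$ together with the fact that $\tp(a_\rho/b)$ is the common type realized on $S$ (so effectively $\rho\in S$, which is how the corollary is later applied in Theorem \ref{thm:Preservation_of_NATP} and what the statement's ``$\rho\in 2^\kappa$'' is meant to say). The two points you flag for care --- propagating $k$-inconsistency from one chain to all $k$-chains by strong indiscernibility, and upgrading consistency from level-isomorphic tuples in $S$ to arbitrary finite antichains via padding into a level --- are precisely the implicit steps already present in the paper's proof of Theorem \ref{thm:indisc_ext_NATP}, so nothing new is needed.
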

\begin{proof}
Clear by the proof of Theorem \ref{thm:indisc_ext_NATP}.
\end{proof}

\noindent By combining the proof of Theorem \ref{one-varable} and Theorem \ref{thm:indisc_ext_NATP}, we have the following.
\begin{thm}\label{thm:improved_indisc_ext_NATP}
 Assume $\kappa$ and $\kappa'$ are cardinals such that $2^{|T|}<\kappa<\kappa'$ and cf$(\kappa)=\kappa$. The following are equivalent.
\begin{enumerate}
    \item $T$ is NATP.
    \item For any strongly indiscernible tree $(a_\eta)_{\eta\in2^{<\kappa'}}$ and $b$ with $|b|=1$, there are some $\rho\in2^\kappa$ and $b'$ such that
     \begin{enumerate}
         \item $(a_{\rho^\frown0^i})_{i<\kappa'}$ is indiscernible over $b'$,
         \item $b\equiv_{a_\rho}b'$.
     \end{enumerate}
\end{enumerate}
\end{thm}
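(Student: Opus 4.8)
The plan is to combine the one-variable reduction from Theorem~\ref{one-varable} with the characterization in Theorem~\ref{thm:indisc_ext_NATP}. The implication $(1)\Rightarrow(2)$ is immediate: it is a special case of Theorem~\ref{thm:indisc_ext_NATP}$(1)\Rightarrow(2)$, since the hypothesis ``$|b|=1$'' is more restrictive than ``$b$ a finite tuple'', so no new work is needed there. Thus the entire content lies in the converse direction $(2)\Rightarrow(1)$, where we now only get to use the one-variable instance of condition (2) and must still derive NATP.

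For $(2)\Rightarrow(1)$ I would argue by contraposition. Suppose $T$ has ATP. By Theorem~\ref{one-varable} there is a formula $\varphi(x,y)$ witnessing ATP \emph{in a single free variable} $x$, i.e.\ $|x|=1$; fix a strongly indiscernible tree $(a_\eta)_{\eta\in 2^{<\kappa'}}$ witnessing ATP for $\varphi$ (using the strong modeling property and that $\kappa'$ is large enough, exactly as in the proof of Theorem~\ref{thm:indisc_ext_NATP}, noting $2^{|T|}<\kappa<\kappa'$). Now take a realization $b\models\bigwedge_{\eta\in 2^{\kappa}}\varphi(x,a_\eta)$, which exists by path/antichain-consistency of ATP applied to the antichain $2^\kappa\subseteq 2^{\le\kappa'}$ (more precisely to each finite antichain inside it, then compactness). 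Crucially, since $|x|=1$, this $b$ is a single element, so $|b|=1$ and condition (2) \emph{applies to this very $b$}. Hence there are $\rho\in 2^\kappa$ and $b'$ with $(a_{\rho^\frown 0^i})_{i<\kappa'}$ indiscernible over $b'$ and $b\equiv_{a_\rho}b'$. Since $b\models\varphi(x,a_\rho)$ and $b\equiv_{a_\rho}b'$, we get $b'\models\varphi(x,a_\rho)=\varphi(x,a_{\rho^\frown 0^0})$; by indiscernibility of $(a_{\rho^\frown 0^i})_{i<\kappa'}$ over $b'$ it follows that $b'\models\varphi(x,a_{\rho^\frown 0^i})$ for all $i<\kappa'$. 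But $\rho\in 2^\kappa$ and $\rho\trianglelefteq\rho^\frown 0^i$, so $(\rho^\frown 0^i)_{i<\kappa'}$ is a chain of pairwise comparable nodes, and $\{\varphi(x,a_{\rho^\frown 0^i}):i<\kappa'\}$ is realized by $b'$ — contradicting that $\varphi$ witnesses ATP (comparable nodes must give $2$-inconsistent, hence inconsistent, sets along a chain). This shows $T$ is NATP and completes the contrapositive.

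The only genuinely delicate point is the first paragraph's reduction: one must check that running Theorem~\ref{one-varable} does not destroy the ability to choose $(a_\eta)$ strongly indiscernible over $2^{<\kappa'}$ for the stated $\kappa'$. Since Theorem~\ref{one-varable} produces a single-variable ATP witness and the strong modeling property (Fact~\ref{modeling property}, together with the analysis in Remark~\ref{modeling properties of Q^omega} giving us sufficiently large strongly indiscernible trees by compactness) lets us stretch any ATP configuration to a strongly indiscernible tree indexed by $2^{\le\kappa'}$, this is routine and identical to the setup already used inside the proof of Theorem~\ref{thm:indisc_ext_NATP}. I would simply remark that ``by combining the proofs'' the argument is verbatim that of Theorem~\ref{thm:indisc_ext_NATP} with the extra input that the ATP witness may be taken with $|x|=1$, so the realization $b$ of $\bigwedge_{\eta\in 2^\kappa}\varphi(x,a_\eta)$ has $|b|=1$ and condition (2) in its restricted form suffices. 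I expect the main (minor) obstacle to be bookkeeping the cardinal arithmetic so that both the one-variable theorem and the coloring arguments of Corollary~\ref{rem:tree_2kappa_coloring} go through for the same $\kappa,\kappa'$; none of this is conceptually hard.
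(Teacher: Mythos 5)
Your proposal is correct and is exactly the argument the paper intends: the paper gives no separate proof, only the remark that one combines Theorem~\ref{one-varable} with Theorem~\ref{thm:indisc_ext_NATP}, and your argument does precisely that — $(1)\Rightarrow(2)$ as a special case of Theorem~\ref{thm:indisc_ext_NATP}, and $(2)\Rightarrow(1)$ by rerunning its $(2)\Rightarrow(1)$ proof with the ATP witness taken in a single free variable so that the realization $b$ of $\bigwedge_{\eta\in 2^\kappa}\varphi(x,a_\eta)$ has $|b|=1$.
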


\section{Examples}\label{sec:Algebraic examples}
In this section, we aim to provide criteria for the following algebraic structures to be NATP:
\begin{itemize}
    \item (Mekler's construction for NATP, Theorem \ref{thm:Preservation_of_NATP}) Groups in the groups language.
    \item (The Chatzidakis criterion for NATP, Theorem \ref{thm:PACfield_NATP_via_Galoisgroup}) Fields in the ring language.
    \item (The AKE-principle for NATP, Theorem \ref{thm:AKE_NATP}) Valued fields in the Denef-Pas language.
\end{itemize}

\noindent Also, at the end of the section, we provide structures having ATP. Specifically, we show that the Skolem arithmetic $(\mathbb{N},\cdot)$ and atomless Boolean algebras have ATP.

\subsection{Groups with NATP}\label{Subsection:mekler}

When a structure of countable language is given, Mekler's construction provides a pure group structure preserving some model-theoretic tameness properties of the given structure.
The following properties are known to be preserved by Mekler's construction: stability, simplicity, NIP, $k$-dependence, NTP$_2$, NSOP$_1$, and NSOP$_2$ (cf. \cite{Ahn, Baudisch, CH2, Mekler}).
In this subsection, We aim to prove that NATP is also preserved by the Mekler's construction. We follow the strategy in \cite{Ahn} and \cite{CH2}.

We recall definitions and facts on Mekler's construction based on \cite[Section 2]{CH2} and \cite[Section A.3]{Hod}.
\begin{dfn}\label{dfn:nice_graph}
A graph is called {\it nice} if 
 \begin{enumerate}
     \item for any vertex $a$ and $b$, there exists vertex $c\neq a, b$ such that $c$ is adjacent to $a$ but not to $b$, and
     \item the graph has no cycle of order 3 nor 4.
 \end{enumerate}
\end{dfn}

\begin{fact}\cite[Theorem 5.5.1]{Hod}\label{fact:nice_graph_conversion}
Any structure in a countable language is bi-interpretable with a nice graph.
\end{fact}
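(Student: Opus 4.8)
The plan is to mimic the classical coding of an arbitrary structure into a graph by ``gadgets'', and then to trim the resulting graph so that it becomes nice, keeping track of definability throughout. First I would reduce to a purely relational language: replace each function symbol by the graph of the function and each constant by a unary predicate, which changes neither the bi-interpretation type nor, after the final adjustments, the niceness conclusion. Since the language of $M$ is countable we are left with countably many relation symbols $R_i$ ($i<\omega$) of arities $n_i<\omega$; this countability is exactly what makes the construction go through, because the gadgets will be told apart by finite numerical data. I would then define a graph $G=G(M)$ whose vertex set is the disjoint union of: a copy $\{v_a : a\in M\}$ of the universe of $M$ (the \emph{element vertices}); one \emph{relation vertex} $w_{i,\bar a}$ for every $i<\omega$ and every tuple $\bar a\in R_i^{M}$; and auxiliary vertices forming, for each such $w_{i,\bar a}$ and each coordinate $j<n_i$, a path of a prescribed length $\ell(i,j)\ge 5$ joining $w_{i,\bar a}$ to $v_{a_j}$, together with a fixed ``tag path'' of length $\ge 5$ hung on every element vertex so that element vertices become first-order distinguishable from the rest. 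The lengths $\ell(i,j)$ are chosen pairwise distinct, and distinct from the tag length, so that from the multiset of path lengths incident to a relation vertex one reads off both the relation index $i$ and the slot occupied by each neighbour.

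Next I would exhibit the two interpretations. The interpretation of $G$ in $M$ is immediate: the element, relation and coding vertices are coded respectively by elements of $M$, by tuples in $\bigcup_i R_i^{M}$, and by further tuples recording ``which point on which coding path'', all of which live in definable subsets of finite powers of $M$ (working in $M^{\mathrm{eq}}$ to form the disjoint union), and the edge relation is given by a finite case split on these types. Conversely $M$ is interpreted in $G$: the formula ``$x$ carries a tag path of the designated length'' defines the element vertices, and $R_i(x_0,\dots,x_{n_i-1})$ is defined by ``there is a relation vertex joined, through coding paths of lengths $\ell(i,0),\dots,\ell(i,n_i-1)$ respectively, to $x_0,\dots,x_{n_i-1}$''. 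One then checks the round trips. For $M\to G(M)\to M'$, the structure $M'$ is carried by the definable set of element vertices and $a\mapsto v_a$ is a definable isomorphism. For $G\to M\to G'$, the point is that every vertex of $G$ is, definably \emph{in $G$}, named by a tuple of element vertices together with bounded combinatorial data (it is an element vertex, or the $k$-th point of the coding path between the relation vertex for $\bar a$ and its $j$-th slot, or a point of a tag path), so the canonical comparison map $G\to G'$ is first-order definable with the same coding. This establishes bi-interpretability of $M$ with the graph $G$.

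Finally I would arrange niceness. The girth requirement is cheap in this design: $G$ is a subdivision-type graph, a family of paths of length $\ge 5$ glued only at element and relation vertices, and two such glued paths already give length $\ge 10$, so a short inspection rules out cycles of length $3$ or $4$. For the separation clause I would attach to every vertex $v$ a small private gadget (say two pendant vertices adjacent to $v$ alone); then for $a\ne b$ a pendant of $a$ distinct from $b$ is the required witness $c$, and, the pendants being uniformly definable (degree-one vertices, with a harmless tweak so that tag-path endpoints have degree two), the two interpretations and the round-trip computations survive verbatim over the enlarged graph.

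I expect the main obstacle to be exactly this simultaneity: the gadgets must be long enough to force girth $\ge 5$, rigid and self-describing enough both for the $G\to M$ interpretation \emph{and} for the definability of the round-trip isomorphisms, and robust under the pendant padding forced by the separation clause; and then one must grind through the routine but genuinely lengthy verification that the two composite interpretations are definable isomorphisms of $M$, respectively of $G$. Packaging a single combinatorial design meeting all of these constraints at once is the real content of the argument, and is precisely what is carried out in \cite[Theorem 5.5.1]{Hod}.
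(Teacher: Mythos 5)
The paper offers no proof of this Fact --- it is quoted verbatim from Hodges --- so your sketch can only be measured against the standard gadget coding, and that is indeed the route you take: element vertices, relation vertices, coding paths of pairwise distinct lengths, tags, then a correction for niceness. Two steps, however, are genuine gaps rather than ``routine but lengthy verification''. First, your interpretation of $G(M)$ in $M$ assumes that the vertices of $G(M)$ are coded inside finitely many definable subsets of finite powers of $M$ and that the edge relation is given by ``a finite case split''. That is correct only if, after your reduction to a relational language, finitely many relation symbols of bounded arity remain. The statement allows a genuinely infinite countable language; then the relation vertices $w_{i,\bar a}$ range over $\bigcup_i R_i^{M}$ with possibly unbounded arities, the lengths $\ell(i,j)$ are unbounded, and the $M$-definition of the edge relation becomes an infinite disjunction over $i$, while the domain of the interpretation no longer fits inside any fixed $M^n$; the same uniformity problem recurs in the round trip $G\to M\to G'$. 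So your argument establishes the finite relational case (which is what the paper actually uses for its examples), not the statement as quoted; and the countable case is delicate, because with the strong notion of bi-interpretation (both composites definably isomorphic to the identity) every relation of $M$ would have to be parameter-definable from the finitely many symbols occurring in the $M$-side formulas. Your remark that countability of the language is ``exactly what makes the construction go through'' points the wrong way: countability is harmless in the $M$-in-$G$ direction, and the difficulty sits entirely in the other direction.

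Second, your niceness padding is itself not nice. A pendant vertex $c_0$ attached only to $v$ has $v$ as its unique neighbour, so taking $a=c_0$ and $b=v$ there is no $c\neq a,b$ adjacent to $a$ but not to $b$, and clause (1) of Definition \ref{dfn:nice_graph} fails; the free endpoints of your tag paths have the same defect, which your ``harmless tweak'' does not actually repair. Since your graph has girth at least $5$, clause (1) amounts to requiring minimum degree at least $2$: if $a$ is adjacent to $b$, triangle-freeness makes any second neighbour of $a$ a witness, and if $a$ is not adjacent to $b$, the absence of $4$-cycles leaves $a$ and $b$ at most one common neighbour. So the correct padding is, for instance, to attach to each vertex two internally disjoint paths of length at least $3$ identified at their far ends (a long private cycle) rather than pendant points, and then to recheck that these cycles neither shorten the girth nor interfere with the length bookkeeping on which your reverse interpretation and round-trip formulas depend.
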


Fix an odd prime number $p$.
\begin{dfn}\label{def:Mekler_group}
Given a nice graph $A$, the Mekler group $G(A)$ of $A$ is generated freely in the variety of 2-nilpotent groups of exponent p by the vertices of $A$ by imposing that two generators commute if and only if they are connected by an edge in $A$.
\end{dfn}

Fix a nice graph $A$ and let $G$ be a model of Th$(G(A))$.

\begin{dfn}\label{def:Equi_rel_in_Mekler_group}
Let $g$ and $h$ be elements in $G$. Also, let $C(g), C(h)$ be centralizers of each $g$ and $h$ and $Z$ be the center of $G$.
\begin{enumerate}
    \item By $g\sim h$, we mean $C(g)=C(h)$.
    \item By $g\approx h$, we mean $h=g^rc$ for some $c\in Z$ and non-negative integer $r$.
    \item By $g\equiv_Z h$, we mean $gZ=hZ$.
\end{enumerate}
\end{dfn}

\begin{dfn}
Let $g$ be an element in $G$.
\begin{enumerate}
    \item We say $g$ is isolated if for any $h\in C(g) \cap (G\backslash Z(G))$, $g\approx h$. If not, we say $g$ is non-isolated.
    \item We denote $n(g)$ to be the number of $\approx$-classes in the $\sim$-class of $g$.
\end{enumerate}
\end{dfn}

\begin{fact} \cite[Theorem A.3.10]{Hod}
$G$ can be partitioned into the following five 0-definable sets;
\begin{enumerate}
    \item $Z(G)$, the center of $G$,
    \item $1^\nu$, the set of non-isolated elements $g$ in $G\backslash Z(G)$ such that $n(g)=1$.
    \item $1^\iota$, the set of isolated elements $g$ in $G\backslash Z(G)$ such that $n(g)=1$.
    \item $P$, the set of elements $g\in G$ such that $n(g)=p$, and
    \item the set of elements $g\in G$ such that $n(g)=p-1$.
\end{enumerate}
\end{fact}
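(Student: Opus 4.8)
The plan is to separate the statement into an algebraic part — that the five listed sets genuinely partition $G$ — and a logical part — that each of them is $0$-definable — and to obtain the first part essentially for free from Mekler's structure theory for the groups $G(A)$, while carrying out the (routine) definability bookkeeping directly. Recall the setup: $G$ is $2$-nilpotent of exponent $p$, so $[G,G]\le Z(G)$, the quotient $\overline G:=G/[G,G]$ is an $\BF_p$-vector space, and the commutator induces an alternating $\BF_p$-bilinear map $\beta\colon\overline G\times\overline G\to[G,G]$; in $G(A)$ one has $\beta(a,b)=0$ iff $a=b$ or $a\sim b$, and $\{\,\beta(a,b):\{a,b\}\text{ a non-edge of }A\,\}$ is a basis of $[G,G]$. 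Because $A$ is nice — no universal vertex, no cycle of length $3$ or $4$, and hence no twin vertices and no isolated edge — Mekler's analysis (see \cite[Section A.3]{Hod} and \cite{Mekler}) shows that for $g\notin Z(G)$ the centralizer $C(g)$ is the preimage in $G$ of $\{\,v\in\overline G:\beta(\bar g,v)=0\,\}$, that the $\sim$-class of $g$ is controlled by a combinatorial ``handle'', and — crucially — that counting the lines $\BF_p\bar h$ occurring inside any single $\sim$-class always yields $n(g)\in\{1,p-1,p\}$. I would invoke this rather than reprove it; it gives at once $G=Z(G)\cup\{\,g:n(g)=1\,\}\cup\{\,g:n(g)=p-1\,\}\cup\{\,g:n(g)=p\,\}$, and the $n(g)=1$ part further divides into its isolated and non-isolated elements, which are precisely the five sets in the statement.

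For $0$-definability the point is that the auxiliary relations are all first-order without parameters. Let $\zeta(u)$ abbreviate $\forall v\,(uv=vu)$, so that $\zeta$ defines $Z(G)$. Then $x\sim y$ is defined by $\forall z\,(zx=xz\leftrightarrow zy=yz)$; the relation $x\approx y$ by the finite disjunction $\bigvee_{0\le r<p}\exists c\,(\zeta(c)\wedge y=x^{r}c)$, which is a genuine formula since $G$ has exponent $p$ (so $x^{r}$ is a term and only $r\in\{0,\dots,p-1\}$ matter); ``$x$ is isolated'' by $\forall y\,\big((xy=yx\wedge\neg\zeta(y))\to x\approx y\big)$; and ``$n(x)=k$'' by
\[
\exists y_1\cdots y_k\ \Big(\ \bigwedge_{i\le k}x\sim y_i\ \wedge\ \bigwedge_{i<j\le k}\neg(y_i\approx y_j)\ \wedge\ \forall y\,\big(x\sim y\to\bigvee_{i\le k}y\approx y_i\big)\ \Big),
\]
which is a legitimate first-order formula precisely because the algebraic input bounds $n(x)$ by $p$, so at most $p$ witnesses are ever needed. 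Consequently $Z(G)$ is defined by $\zeta$; the set $1^{\nu}$ by the conjunction of $\neg\zeta(x)$, ``$n(x)=1$'', and the negation of ``$x$ is isolated''; the set $1^{\iota}$ by the conjunction of $\neg\zeta(x)$, ``$n(x)=1$'', and ``$x$ is isolated''; the set $P$ by ``$n(x)=p$''; and the fifth set by ``$n(x)=p-1$'' (read as ``$n(x)=2$'' when $p=3$). Note $n(x)\in\{p-1,p\}$ already forces $x\notin Z(G)$, since $n(z)=1$ for every central $z$.

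Finally, the partition transfers from $G(A)$ to the arbitrary model $G\models\Th(G(A))$ by elementarity: ``these five formulas define pairwise disjoint sets whose union is the whole universe'' is a single first-order sentence, true in $G(A)$ by the structure theory above and hence true in $G$. I expect the genuine difficulty to lie entirely in the algebraic input quoted in the first paragraph — Mekler's handle calculus and the verification that niceness rules out any value of $n(g)$ other than $1$, $p-1$, $p$, which is a somewhat delicate case analysis with the form $\beta$ over $A$ (one treats single-vertex handles, edge handles, and ``two non-adjacent vertices with at most one common neighbour'' handles, and checks that longer configurations reduce to these); once that is granted, the definability and the elementary transfer are straightforward. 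This is in the spirit of the Mekler-construction arguments of \cite{Ahn,CH2}.
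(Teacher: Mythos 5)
The paper offers no proof of this statement at all: it is imported verbatim as a Fact from Hodges (Theorem A.3.10), and your argument ultimately rests on the same source, since the only substantive ingredient --- that every non-central $g$ satisfies $n(g)\in\{1,p-1,p\}$ via Mekler's handle analysis --- is exactly what you quote rather than prove. The routine material you do supply is correct: the parameter-free formulas for $Z(G)$, $\sim$, $\approx$ (the disjunction over $r<p$ is legitimate since $G$ has exponent $p$, and the $r=0$ case is harmless inside the $\sim$-class of a non-central element), isolation, and ``$n(x)=k$'' (first-order precisely because $n$ is bounded by $p$), together with the elementarity transfer to an arbitrary $G\models\Th(G(A))$, so your write-up is consistent with the paper's treatment, which is simply to cite the result.
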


\begin{dfn}
Let $g$ be an element in $P$. For $h\in G(A)$, we say $h$ is a handle of $g$ if it is in $1^\nu$ and satisfies $gh=hg$.
\end{dfn}

\begin{fact}
Both $Z(G)$, the center of $G$, and the quotient $G/Z(G)$ are isomorphic to $\mathbb{F}_p$-vector spaces.
\end{fact}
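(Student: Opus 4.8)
The plan is to reduce the statement to two first-order properties of $G(A)$ which automatically transfer to $G$: being $2$-nilpotent and having exponent $p$. Since $G(A)$ is constructed inside the variety of $2$-nilpotent groups of exponent $p$, it satisfies the sentences $\forall x\,(x^p = 1)$ and $\forall x\,\forall y\,\forall z\,([[x,y],z] = 1)$. Both are first-order, hence they hold in every model of $\Th(G(A))$, in particular in $G$. So throughout we may assume $x^p = 1$ for all $x\in G$ and $[G,G]\le Z(G)$.

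First I would treat $Z(G)$. It is abelian by definition of the center, and as a subgroup of $G$ each of its elements has order dividing $p$; an abelian group of exponent dividing $p$ carries a (unique) $\mathbb{F}_p$-vector space structure, with scalar multiplication $n\cdot g := g^n$ for $n\in\{0,1,\dots,p-1\}$, which is well-defined precisely because $g^p = 1$. This gives the claim for the center. For the quotient, $2$-nilpotency gives $[G,G]\le Z(G)$, so $G/Z(G)$ is abelian; being a homomorphic image of a group of exponent $p$, it also has exponent dividing $p$. By the same elementary remark, $G/Z(G)$ is an $\mathbb{F}_p$-vector space (the zero space in the degenerate case $G=Z(G)$, which does not occur for $G(A)$ built from a nontrivial nice graph, but the statement holds either way).

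The only step that deserves a moment's care is the transfer: one must observe that ``$2$-nilpotent of exponent $p$'' is a property of the theory $\Th(G(A))$, not merely an artefact of the chosen presentation of $G(A)$, so that the argument applies verbatim to an arbitrary $G\models\Th(G(A))$. Once this is noted there is no genuine obstacle, the conclusion being immediate from elementary group theory.
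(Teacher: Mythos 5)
Your proposal is correct: the identities $x^p=1$ and $[[x,y],z]=1$ axiomatize membership in the variety in which $G(A)$ is constructed, they transfer to any $G\models\Th(G(A))$ by first-orderness, and then $Z(G)$ and $G/Z(G)$ are abelian of exponent dividing $p$, hence $\mathbb{F}_p$-vector spaces. The paper states this as a background fact of Mekler's construction without proof, and your argument is exactly the standard one, so there is nothing to add.
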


\begin{dfn}
Let $B$ be a subgroup of $G$ containing $Z(G)$. We say a set $C\subseteq G$ is independent modulo $B$ if it is linearly independent over $B$ in terms of the corresponding vector space.
\end{dfn}

\begin{dfn}\label{def:Transversal}
Let $G$ be a model of Th$(G(A))$.
\begin{enumerate}
    \item A $1^\nu$-transversal of $G$, $X^\nu$, is a set containing exactly one representative in each $\sim$-class of $1^\nu$.
    \item An element in $G$ is called proper if it is not a product of elements in $1^\nu$.
    \item A $p$-transversal of $G$, $X^p$, is a set consisting proper elements in $P$ where 
     \begin{enumerate}
      \item for any $g,h\in X^p$, $g$ is not $\sim$-equivalent to $h$,
      \item $X^p$ is maximal with the property that for any finite subset $X'\subseteq X^p$, if all elements of $X'$ have the same handle, then $X'$ is independent modulo the subgroup $\langle Z(G)\cup1^\nu \rangle$.
     \end{enumerate}
    \item A $1^\iota$-transversal of $G$, $X^\iota$, is a set containing exactly one representative in each $\sim$-class of $1^\iota$ and is maximal independent modulo the subgroup $\langle Z(G)\cup1^\nu\cup P \rangle$.
    \item A transversal of $G$ is a union of some $X^\nu$, $X^p$, and $X^\iota$ of $G$.
\end{enumerate}
\end{dfn}

\begin{fact}\label{fact:interpretation_graph_in_Mekler_group}\cite[Theorem A.3.14 (a)]{Hod}
Let $\Gamma$ be an graph interpretation in $G$ such that 
\begin{enumerate}
    \item the universe is $\sim$-equivalent classes of the set of $1^\nu$,
    \item the edge relation is the set of all pairs $([g]_\sim,[h]_\sim)$ where $[g]_\sim\neq[h]_\sim$ and
    $gh=hg$.
\end{enumerate}
Then $\Gamma(G)$ is a model of Th$(A)$.
\end{fact}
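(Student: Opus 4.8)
The plan is to reduce to the single model $G(A)$ and then transfer the conclusion along elementary equivalence. First I would check that $\Gamma$ is a genuine parameter-free interpretation of a graph in $\Th(G(A))$: its ingredients are all $0$-definable in the pure group language --- $1^\nu$ is one of the five $0$-definable pieces of the partition recalled above, the relation $[g]_\sim = [h]_\sim$ unwinds to $C(g) = C(h)$, which is first-order because centralizers are uniformly definable, and the edge clause $gh = hg$ is atomic. Consequently, for any group $G \equiv G(A)$ one has $\Gamma(G) \equiv \Gamma(G(A))$, so it suffices to prove $\Gamma(G(A)) \cong A$; then $\Gamma(G) \equiv A$, i.e.\ $\Gamma(G) \models \Th(A)$, for every $G \models \Th(G(A))$.

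Next I would show that $v \mapsto [v]_\sim$ is a graph isomorphism from $A$ onto $\Gamma(G(A))$. By the defining presentation of $G(A)$, distinct generators commute precisely when they are adjacent in $A$, so the edge clause of $\Gamma$ is matched as soon as the map is injective, i.e.\ as soon as $v \neq w$ forces $C(v) \neq C(w)$. This is exactly where niceness is used: clause (1) of Definition~\ref{dfn:nice_graph}, applied with $a = v$ and $b = w$, produces a vertex $c \neq v,w$ adjacent to $v$ but not to $w$, so $c \in C(v) \setminus C(w)$ and hence $[v]_\sim \neq [w]_\sim$. Surjectivity onto the $\sim$-classes of $1^\nu$ is the substantive point: using that $Z(G(A))$ equals the commutator subgroup for nice $A$ and the normal-form description of the free $2$-nilpotent exponent-$p$ group, one computes centralizers of arbitrary products of generators and checks that every non-central element either fails to lie in $1^\nu$ or is $\sim$-equivalent to a (unique) vertex; the triangle- and $4$-cycle-freeness (clause (2)) is what keeps these centralizer computations under control and prevents two distinct vertices from being identified through a common neighbour.

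Putting these together gives $\Gamma(G(A)) \cong A$ and therefore $\Gamma(G) \models \Th(A)$ for all $G \models \Th(G(A))$. The main obstacle is the surjectivity half of the second step --- the structural assertion that the interpreted universe of $\Gamma(G(A))$ is no larger than the vertex set of $A$; this is the genuinely hard part of Mekler's construction and is precisely what the cited theorem of Hodges supplies, while the definability of the pieces, the transfer across $\equiv$, and the edge correspondence are routine once that analysis is in hand.
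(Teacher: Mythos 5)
The paper offers no proof of this statement at all: it is imported verbatim as a Fact with a citation to Hodges, Theorem A.3.14(a), so there is no internal argument to compare yours against. Your outline is the standard route and is the one the cited source follows: check that the universe, the equivalence relation $\sim$ (via $C(g)=C(h)$), and the edge relation are $0$-definable in the pure group language (the $0$-definability of the five-part partition, hence of $1^\nu$, is exactly the earlier quoted Theorem A.3.10); prove the single concrete isomorphism $\Gamma(G(A))\cong A$; and then transfer along elementary equivalence, since a parameter-free interpretation turns $G\equiv G(A)$ into $\Gamma(G)\equiv\Gamma(G(A))$. Your use of niceness clause (1) for injectivity of $v\mapsto[v]_\sim$ is also correct.

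The one point to flag is that, as a standalone proof, your treatment of the substantive half is circular: you attribute the surjectivity statement (every $\sim$-class of $1^\nu$ in $G(A)$ contains a vertex, and in fact exactly one) to ``the cited theorem of Hodges,'' but that cited theorem is precisely the statement being proved. What actually carries this step in the source is the underlying structure theory of $G(A)$ --- the normal form for the free $2$-nilpotent exponent-$p$ group, the computation of centralizers of words in the generators, and the verification that each generator is non-isolated with $n(v)=1$; the last point is also needed, and not mentioned in your sketch, just to see that $v\mapsto[v]_\sim$ lands in the universe of $\Gamma(G(A))$ at all. If the intent is merely to elaborate the citation, as the paper itself does, your proposal is consistent with it; if the intent is an independent proof, the centralizer analysis in $G(A)$ is the missing content and should be cited at the level of those lemmas (or carried out) rather than at the level of A.3.14(a) itself.
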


\begin{fact}\label{fact:Decomposition_of_Mekler_group}\cite[Theorem A.3.14 (d)]{Hod}
Let $C$ be an infinite nice graph and $G$ be a model of theory of $G(C)$. Then $G$ is isomorphic to $\langle X\rangle \times H$ where $X$ is the transversal of $G$ and $H$ is a subgroup of the center of $G$. Moreover, $H$ is isomorphic to a $\mathbb{F}_p$-vector space.
\end{fact}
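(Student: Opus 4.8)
The plan is to reduce everything to linear algebra over $\mathbb{F}_p$, once one establishes that the transversal $X$ generates $G$ modulo its centre. Recall that $G\models\mathrm{Th}(G(C))$ is $2$-nilpotent of exponent $p$, so (as recorded above) both $Z(G)$ and $\bar G:=G/Z(G)$ are elementary abelian $p$-groups, i.e. $\mathbb{F}_p$-vector spaces; write $\bar g$ for the image of $g$ in $\bar G$. The crux is the claim that $\langle X\rangle Z(G)=G$, equivalently that $\{\bar x:x\in X\}$ spans $\bar G$.

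To prove the claim I would run through the $0$-definable five-set partition $G=Z(G)\sqcup 1^\nu\sqcup 1^\iota\sqcup P\sqcup\{g:n(g)=p-1\}$. Elements of $Z(G)$ are trivially absorbed, and elements with $n(g)=p-1$ lie in $\langle 1^\nu\cup P\rangle$ by the structural description of Mekler groups (such a $g$ is, up to the centre, a product of a $P$-element and $1^\nu$-elements), so it suffices to span the images of $1^\nu$, $P$ and $1^\iota$. If $g\in 1^\nu$ then $n(g)=1$, so the unique representative $h\in X^\nu$ of the $\sim$-class of $g$ satisfies $g\approx h$; hence $\bar g$ is a scalar multiple of $\bar h$, and $\langle 1^\nu\rangle Z(G)=\langle X^\nu\rangle Z(G)$. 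If $g\in P$, then (once the handle constraint is taken into account) the maximality clause in Definition \ref{def:Transversal}(3) prevents $g$ from being independent modulo $\langle Z(G)\cup 1^\nu\cup X^p\rangle$, forcing $g\in\langle Z(G)\cup 1^\nu\cup X^p\rangle$; combined with the previous point, $\langle 1^\nu\cup P\rangle Z(G)=\langle X^\nu\cup X^p\rangle Z(G)$. Finally, for $g\in 1^\iota$ the maximal independence of $X^\iota$ modulo $\langle Z(G)\cup 1^\nu\cup P\rangle$ gives $g\in\langle Z(G)\cup 1^\nu\cup P\cup X^\iota\rangle$, and assembling the three cases yields $\langle X\rangle Z(G)=G$.

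Granting the claim, the rest is bookkeeping. Set $V:=\langle X\rangle\cap Z(G)$; since $Z(G)$ is an elementary abelian $p$-group, $V$ is automatically an $\mathbb{F}_p$-subspace, so pick an $\mathbb{F}_p$-complement $H$ with $Z(G)=V\oplus H$. Then $\langle X\rangle H=\langle X\rangle Z(G)=G$ and $\langle X\rangle\cap H=V\cap H=\{1\}$. As $H\le Z(G)$ it is central, hence normal and centralizing $\langle X\rangle$; consequently $\langle X\rangle$ is normal too, since conjugation by $xh$ (with $x\in\langle X\rangle$, $h\in H$) acts on $\langle X\rangle$ exactly like conjugation by $x$. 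Therefore $G=\langle X\rangle\times H$, and $H$, being a subgroup of the $\mathbb{F}_p$-vector space $Z(G)$, is itself an $\mathbb{F}_p$-vector space.

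The step I expect to be the genuine obstacle is the $P$-part (and the $n(g)=p-1$ part) of the claim $\langle X\rangle Z(G)=G$: making it precise requires the normal-form and decomposition lemmas for elements of a model of $\mathrm{Th}(G(C))$ and a careful analysis of how handles interact with the independence condition defining a $p$-transversal. The $1^\nu$ and $1^\iota$ cases and the central-complement argument are, by contrast, routine.
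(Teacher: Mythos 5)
The paper does not prove this statement: it is quoted verbatim as a Fact from Hodges \cite[Theorem A.3.14(d)]{Hod}, so there is no proof of the authors' own to compare yours against, and your proposal has to be judged as a standalone argument. Your endgame is the standard one and is fine: once one knows $\langle X\rangle Z(G)=G$, the subgroup $\langle X\rangle\cap Z(G)$ is an $\mathbb{F}_p$-subspace of the elementary abelian group $Z(G)$, any vector-space complement $H$ works, and the map $\langle X\rangle\times H\to G$, $(x,h)\mapsto xh$ is an isomorphism because $H$ is central and meets $\langle X\rangle$ trivially. Your $1^\nu$ case (using $n(g)=1$ to get $g\approx x$ for the representative $x\in X^\nu$) and the reduction of the $1^\iota$ case to the $P$ case via maximal independence are also correct.

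The genuine gap is precisely the claim $\langle X\rangle Z(G)=G$, at two points. First, for a proper $g\in P$ that \emph{is} $\sim$-equivalent to some $x\in X^p$, the maximality clause of Definition \ref{def:Transversal}(3) gives you nothing: what blocks adding $g$ to $X^p$ is clause (3)(a), and to conclude $g\in\langle Z(G)\cup 1^\nu\cup X^p\rangle$ you need the description of the $\sim$-class of a type-$p$ element via its handle (namely that the $\sim$-class of $x$ with handle $h$ is contained in $\langle x,h\rangle Z(G)$), which is one of the centralizer lemmas of Hodges' appendix and is not supplied by the transversal axioms. Second, and more seriously, the assertion that every $g$ with $n(g)=p-1$ lies, modulo $Z(G)$, in $\langle 1^\nu\cup P\rangle$ is exactly the instance of the theorem's conclusion for such $g$; it cannot be obtained by elementary transfer from $G(C)$ (the relevant products have unbounded length, so membership in $\langle 1^\nu\cup P\rangle Z(G)$ is not first-order), and it is the content of the normal-form/centralizer analysis that occupies most of Hodges' proof of A.3.14. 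Invoking ``the structural description of Mekler groups'' at that point is therefore close to assuming what is to be proved. As a reduction of the Fact to those structural lemmas your outline follows the standard route; as a proof it is incomplete exactly where you yourself flag the obstacle.
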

We may say $G$ is isomorphic to $\langle X\rangle \times \langle H'\rangle$ where $H'$ is a basis of $H$ in terms of the corresponding vector space. Also note that Th$(H)$ is stable and has elimination of quantifiers.

\begin{fact}\cite[Theorem A.3.14 (c), Corollary A.3.15]{Hod}\cite[Remark 2.12]{CH2}
Let $C$ be an infinite nice graph and $G$ be a saturated and uncountable model of $\Th(G(C))$. For a transversal $X=X^\nu\cup X^p\cup X^\iota$ of $G$, let $G$ be isomorphic to $\langle X\rangle \times H$ for some $H$.
 \begin{enumerate}
     \item Both $\Gamma(G)$ and $H$ are saturated, too.
     \item For any $g\in X^\nu$, the cardinality of the set $\{g'\in X^p:g \text{ is the handle of } g'\}$ is either uncountable or zero.
     \item $|X^\iota|$ is either uncountable or zero.
 \end{enumerate}
\end{fact}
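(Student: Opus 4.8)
This is a package of standard structural facts about Mekler's construction, and my plan would be to derive all three items from two soft principles together with the combinatorial analysis of $G(C)$ found in \cite{Hod}. The soft principles are: (a) a structure interpreted in $G$ over $\emptyset$ inherits $\lambda$-saturation from $G$ (pull types back along the interpretation, realize them in $G$, project); and (b) an infinite $\lambda$-saturated structure has cardinality $\ge\lambda$ — if $|N|<\lambda$, take $A=N$ and realize the consistent type $\{x\ne a:a\in A\}$ — so an infinite structure interpretable in $G$ has cardinality exactly $|G|$, and an interpretable $\mathbb{F}_p$-vector space has dimension either finite or $|G|$.

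For (1): $\Gamma(G)$ is by definition the quotient of the $\emptyset$-definable set $1^\nu$ by the $\emptyset$-definable relation $\sim$, with $\emptyset$-definable edge relation, hence $\emptyset$-interpretable; since $\Gamma(G)\models\Th(A)$ is infinite, (a) and (b) give $|\Gamma(G)|=|G|$ and $\Gamma(G)$ is saturated. For $H$: by the transversal structure theory (\cite[Theorem A.3.14]{Hod}), $H$ is, up to isomorphism, a $\emptyset$-interpretable quotient of the $\emptyset$-definable vector space $Z(G)$ by a $\emptyset$-definable subgroup; so $H$ is interpretable, (a) makes it $|G|$-saturated, $\Th(H)$ is the theory of infinite $\mathbb{F}_p$-vector spaces, and (b) forces $\dim H=|H|=|G|$, i.e.\ $H$ is saturated.

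For (2) and (3): each cardinality is the $\mathbb{F}_p$-dimension of a vector space interpretable over a finite parameter set. Indeed $|X^\iota|=\dim_{\mathbb{F}_p}\bigl(\langle 1^\iota\rangle/\langle Z(G)\cup 1^\nu\cup P\rangle\bigr)$, and for $g\in X^\nu$ the cardinality $|\{g'\in X^p:g\text{ is a handle of }g'\}|$ equals the dimension, modulo $\langle Z(G)\cup 1^\nu\rangle$, of the span of the proper elements of $P$ whose handle is $\sim$-equivalent to $g$, a set definable over $g$. By (b), each of these dimensions is finite or $|G|$; the remaining task is to rule out the finite nonzero case, i.e.\ to show each dimension is $0$ or infinite. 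Granting that, if the dimension is nonzero then for any small $B\subset G$ the type saying ``$x$ lies in the relevant definable set and is $\mathbb{F}_p$-independent from $B$'' is consistent (by the infiniteness), hence realized by saturation; iterating produces more than $\mu$ independent elements for every $\mu<|G|$, so the dimension is $|G|$, which is uncountable.

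The main obstacle is precisely this $0$-or-infinite dichotomy for (2) and (3). It is not a formal consequence of saturation; one has to return to $G(C)$ and use niceness of $C$ — the extension property of a nice graph (every vertex has a neighbour, and one can adjoin new vertices realizing prescribed adjacencies without creating $3$- or $4$-cycles) is what lifts, at the group level, the existence of a single isolated element (respectively of a single proper $P$-element with handle $g$) to an infinite $\mathbb{F}_p$-independent family of such elements. This is exactly the combinatorial content of \cite[Theorem A.3.14, Corollary A.3.15]{Hod} and \cite[Remark 2.12]{CH2}, which I would invoke for the dichotomy rather than reprove.
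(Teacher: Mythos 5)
The paper does not actually prove this statement: it is imported as a Fact, with the proof delegated wholesale to \cite{Hod} (Theorem A.3.14(c), Corollary A.3.15) and \cite{CH2} (Remark 2.12). So there is no in-paper argument to compare yours against, and your proposal has to be judged as a free-standing reconstruction. As such it has two weak points. First, the interpretability claims are not right as stated. For $\Gamma(G)$ the argument is fine, since $1^\nu$, $\sim$ and the edge relation are $\emptyset$-definable. But $H$ is a complement in $Z(G)$ of $\langle X\rangle\cap Z(G)$, and that subgroup --- like $\langle Z(G)\cup 1^\nu\rangle$, $\langle Z(G)\cup 1^\nu\cup P\rangle$, and the notion of a ``proper'' element --- is a subgroup generated by a (not even everywhere definable) set: it is only a countable increasing union of definable sets, not a definable subgroup, so $H$ and the subquotients whose dimensions you compute in (2) and (3) are not literally ``interpretable over a finite parameter set'', and your principle (a) does not apply to them directly. (For $H$ there is an easier route once infiniteness is known: $\Th(H)$ is the totally categorical theory of infinite $\mathbb{F}_p$-vector spaces, so every infinite model is saturated; but the genuine content --- that $H$ is infinite, indeed of dimension $|G|$ --- again requires the transversal analysis, not softness.) The saturation boost from ``infinite'' to ``uncountable/of full dimension'' can be salvaged, because the relevant conditions (properness, non-membership in a generated subgroup) are type-definable, but that is a repair you would still need to carry out explicitly.

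Second, and more importantly, you explicitly defer the $0$-or-infinite dichotomy in (2) and (3) --- and implicitly the nontriviality of $H$ in (1) --- to the very results being quoted. That dichotomy is essentially the entire content of Corollary A.3.15 in \cite{Hod} and Remark 2.12 in \cite{CH2}; the soft saturation bookkeeping surrounding it is routine. So your text reduces the Fact to its citations, which is consistent with how the paper itself treats it (pure citation), but it is not a proof of the statement, and the intermediate definability reductions you do offer would need the corrections indicated above before they could be used.
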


\begin{fact}\label{fact:type-definable_property_of_decomposition}\cite[Proposition 2.18]{CH2}
Let $C$ be an infinite nice graph and $G$ be a model of theory of $G(C)$. 
There exists a partial type $\pi(\bar{x},\bar{y})$ with small tuples $\bar{x},\bar{y}$ such that 
$\bar{a}\bar{b}\models\pi$ if and only if there is a transversal $X$ of $G$ containing $\bar{a}$ and and independent subset $H$ of $Z(G)$ containing $\bar{b}$ such that $G=\langle X\rangle\times\langle H\rangle$.
\end{fact}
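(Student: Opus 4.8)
The plan is to write $\pi(\bar x,\bar y)$ out explicitly as a conjunction of clauses, each visibly a partial type over $\emptyset$, and then check the two directions separately: necessity is immediate from the definitions of a transversal and of $\sim,\approx$, while sufficiency is an extension argument relying on the structure theory of Mekler groups (Fact \ref{fact:Decomposition_of_Mekler_group} and the material of \cite{Hod}, Theorem A.3.14).

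Concretely, I would split $\bar x=\bar x^\nu\bar x^p\bar x^\iota$ according to which piece of the $0$-definable partition of $G$ each coordinate should land in, and let $\pi$ assert: (i) the coordinates of $\bar x^\nu$, $\bar x^p$, $\bar x^\iota$ lie in $1^\nu$, $P$, $1^\iota$ respectively, and those of $\bar y$ lie in $Z(G)$; (ii) the coordinates within each of $\bar x^\nu$, $\bar x^p$, $\bar x^\iota$ are pairwise $\not\sim$-equivalent; (iii) every coordinate $g$ of $\bar x^p$ is \emph{proper}, which is the partial type $\bigwedge_{n<\omega}\neg\exists h_1\cdots h_n\bigl(\bigwedge_{i\le n}h_i\in 1^\nu\,\wedge\, g=h_1\cdots h_n\bigr)$; (iv) every subtuple of $\bar x^p$ with a common handle (``$h$ is a handle of $g$'' being first-order) is linearly independent modulo $\langle Z(G)\cup 1^\nu\rangle$, where ``$g\notin\langle Z(G)\cup 1^\nu\rangle$'' is again a partial type of the shape in (iii); (v) $\bar x^\iota$ is independent modulo $\langle Z(G)\cup 1^\nu\cup P\rangle$, expressed the same way; and (vi) $\bar y$ is $\mathbb{F}_p$-linearly independent, which is first-order since every element has order $p$. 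The crucial point in this step is that ``proper'' and ``$\notin\langle Z(G)\cup 1^\nu\rangle$'' appear only on the side where they are a countable conjunction of negated existential formulas, so $\pi$ is genuinely a partial type; writing any of these conditions with the opposite polarity would break type-definability.

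For the equivalence: if $X=X^\nu\cup X^p\cup X^\iota\supseteq\bar a$ is a transversal and $H\supseteq\bar b$ an independent subset of $Z(G)$ with $G=\langle X\rangle\times\langle H\rangle$, then each clause (i)--(vi) holds by Definition \ref{def:Transversal} and Definition \ref{def:Equi_rel_in_Mekler_group}, so $\bar a\bar b\models\pi$. Conversely, given $\bar a\bar b\models\pi$ I would enlarge $\bar a^\nu$ to a full $1^\nu$-transversal by adjoining one representative of each remaining $\sim$-class; extend $\bar a^p$, by Zorn's lemma, to a maximal set of proper elements of $P$, pairwise $\not\sim$, satisfying the common-handle independence condition; extend $\bar a^\iota$ to a maximal $\sim$-transversal of $1^\iota$ independent modulo $\langle Z(G)\cup 1^\nu\cup P\rangle$; and extend $\bar b$ to a basis $H$ of a complement of $\langle X\rangle\cap Z(G)$ in $Z(G)$. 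That the resulting $X$ is a transversal and that $G=\langle X\rangle\times\langle H\rangle$ is exactly the content of the structure theory cited above. The main obstacle is this last point: one must know that a finite partial transversal satisfying only the local clauses is never ``stuck'' — every such configuration embeds into a genuine transversal decomposition — and verifying this uses the fine description of models of $\Th(G(C))$ from \cite{Hod}, in particular that niceness and infiniteness of $C$ leave enough room in each $\sim$-class, enough $p$-elements with a given handle, and enough isolated elements.
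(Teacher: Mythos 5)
The statement you are proving is quoted in the paper as a Fact, with no in-paper proof: it is imported verbatim from \cite{CH2} (Proposition 2.18). Judged on its own terms, your construction of $\pi$ does not satisfy the stated biconditional, because nothing in clauses (i)--(vi) ties $\bar{y}$ to the subgroup generated by a transversal. Concretely, take $\bar{x}$ empty (or any admissible $\bar{a}$) and $\bar{y}=(b)$ with $b=[g,h]\neq 1$ a nontrivial commutator. Then $b\in G'\subseteq Z(G)$ and $b\neq 1$, so clauses (i) and (vi) hold and $\bar{a}\bar{b}\models\pi$; but for any decomposition $G=\langle X\rangle\times\langle H\rangle$ the $H$-part is central, so $G'=\langle X\rangle'\subseteq\langle X\rangle$, whence $b\in\langle X\rangle\cap\langle H\rangle=\{1\}$, a contradiction. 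So no transversal/complement pair containing this $\bar{b}$ exists, and your $\pi$ is strictly weaker than the right-hand side. What is missing is (at least) the clause that every nontrivial $\mathbb{F}_p$-linear combination of $\bar{y}$ lies outside $G'$ (this is type-definable, a countable conjunction of negated existentials, exactly like your properness clause); your final step ``extend $\bar{b}$ to a basis of a complement of $\langle X\rangle\cap Z(G)$ in $Z(G)$'' silently uses precisely this independence, which does not follow from (i)--(vi).

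Even after adding that clause, the sufficiency direction is not finished. You would still need to know (a) that $\langle X\rangle\cap Z(G)$ is $G'$ (or is otherwise intrinsically identified), so that independence of $\bar{b}$ over $G'$ really permits completing it to a complement for the particular transversal $X$ you build around $\bar{a}$; and (b) your ``never stuck'' claim, i.e.\ that any tuple satisfying the local clauses extends to an honest transversal with $G=\langle X\rangle\times\langle H\rangle$. These two points are the actual mathematical content of \cite{CH2}, Proposition 2.18 (resting on the proof of Theorem A.3.14 in \cite{Hod}, i.e.\ Fact \ref{fact:Decomposition_of_Mekler_group}), and gesturing at ``the fine description of models'' leaves them unproved. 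The purely formal part of your plan --- splitting $\bar{x}$ by transversal sort, putting properness and non-membership in $\langle Z(G)\cup 1^\nu\rangle$ on the type-definable side, handling the common-handle condition as a conditional --- does match how the type is written down in the source, so the skeleton is right; the genuine gaps are the missing $\bar{y}$-clause exhibited above and the unproved extension/complement step.
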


\begin{fact}\label{fact:iso_ext_thm_for_Mekler_groups}\cite[Lemma 2.14]{CH2}
Let $C$ be an infinite nice graph and $G$, $X$, and $H$ be given in Fact \ref{fact:Decomposition_of_Mekler_group}. Let $f:Y\rightarrow Z$ be a bijection between two small subsets $Y$ and $Z$ of the transversal $X$ such that
 \begin{enumerate}
     \item $f$ preserves each transversal-type of $Y$ to $Z$.
     \item $f$ preserves the handle relation of $Y$ to $Z$, that is, if $g\in X^\nu$ is the handle of $g'\in X^p$, then $f(g)$ is the handle of $f(g')$, too.
     \item $Y^\nu$ and $Z^\nu$ have the same first order type.
 \end{enumerate}
Then there exist an automorphism $\sigma$ of $G$ extended from $f$. 

Moreover, for any $h,k\in H$, if tp($h$) and tp($k$) are equivalent modulo Th($H$), then we may assume the automorphism $\sigma$ sends $h$ to $k$.
\end{fact}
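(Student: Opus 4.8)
The plan is to reduce the statement, via the decomposition $G=\langle X\rangle\times\langle H'\rangle$ of Fact~\ref{fact:Decomposition_of_Mekler_group} (with $H'$ a basis of $H$ as a vector space), to extending $f$ separately over the three pieces $X^\nu$, $X^p$, $X^\iota$ of the transversal and over $H'$, and then to glue the pieces into a single automorphism. The first point I would record is that a group automorphism of $G$ amounts to a bijection of the transversal $X$ compatible with the commutation relations among its elements — which are governed by the edge relation of the interpreted graph $\Gamma(G)$ (Fact~\ref{fact:interpretation_graph_in_Mekler_group}) and the handle relation — together with a linear automorphism of $H$. So it is enough to extend $f$ to a global bijection of $X$ of this kind and, independently, to extend the $H$-part.

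First I would handle $X^\nu$. By Fact~\ref{fact:interpretation_graph_in_Mekler_group} the $\sim$-classes of $1^\nu$ form the universe of a model $\Gamma(G)\models\Th(A)$, which is saturated by the saturation fact for Mekler groups recalled above; hypothesis~(3), that $Y^\nu$ and $Z^\nu$ have the same first-order type in $G$, translates into the induced map on $\Gamma(G)$ being a partial elementary map with small domain, so a standard back-and-forth extends it to an automorphism $\sigma_0$ of $\Gamma(G)$. Lifting $\sigma_0$ through a choice of $\sim$-representatives compatibly with $f\restriction Y^\nu$ (possible by hypothesis~(1)) and using the functoriality of Mekler's construction on the graph part, I obtain an automorphism of $\langle X^\nu\rangle\cdot Z(G)$ extending $f\restriction Y^\nu$.

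Next I would extend over $X^p$ and then over $X^\iota$, once more by back-and-forth. For $g'\in X^p$ with handle $h(g')\in X^\nu$, hypothesis~(2) forces the image of $g'$ to be an element of $X^p$ with handle $\sigma_0(h(g'))$; by the saturation fact the fiber of $X^p$ over any given vertex is uncountable or empty, and it is nonempty here because $h(g')$ had a nonempty fiber and $\sigma_0$ is a graph automorphism, so there is always room to place the next element while keeping the maximal-independence clause of Definition~\ref{def:Transversal} (independence modulo $\langle Z(G)\cup 1^\nu\rangle$ for a fixed handle). The same argument, using that $|X^\iota|$ is uncountable or zero and the maximal-independence clause modulo $\langle Z(G)\cup 1^\nu\cup P\rangle$, extends the bijection over $X^\iota$; throughout, hypothesis~(1) is exactly what lets each one-step extension be realized. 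Finally, $\Th(H)$ is the theory of an $\mathbb{F}_p$-vector space, hence stable with quantifier elimination, and $H$ is saturated, so a type-preserving partial map on a basis extends to a linear automorphism, and one may prescribe the image of a single $h$ to be any $k$ with $\tp(h)\equiv\tp(k)$ modulo $\Th(H)$. Assembling the transversal bijection with this linear automorphism, and invoking Fact~\ref{fact:type-definable_property_of_decomposition} to see that the result is a genuine automorphism of $G=\langle X\rangle\times\langle H'\rangle$, produces the desired $\sigma\in\mathrm{Aut}(G)$ extending $f$, with the extra control $h\mapsto k$.

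The main obstacle will be the bookkeeping in the $X^p$-step: I must check that after applying the graph automorphism $\sigma_0$ the relevant handle-fibers inside $X^p$ still have large enough cardinality, and that the various independence constraints in Definition~\ref{def:Transversal} are preserved by the partial map being built, so the back-and-forth never gets stuck; I must also verify that the glued bijection of $X$ really does respect every commutation relation defining $\langle X\rangle$ — equivalently, that each $p$-transversal element is sent to one commuting with exactly the elements forced by its image handle. These are precisely the places where the structural facts recalled above (Facts~\ref{fact:interpretation_graph_in_Mekler_group}, \ref{fact:Decomposition_of_Mekler_group}, \ref{fact:type-definable_property_of_decomposition}, and the saturation fact) carry the weight.
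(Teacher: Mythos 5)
The paper itself gives no proof of this statement: it is imported from Chernikov--Hempel \cite[Lemma 2.14]{CH2}, whose argument (resting on Hodges' structure theory \cite[Theorem A.3.14]{Hod}) is exactly the route you sketch --- extend the induced map on $\Gamma(G)$ to a graph automorphism by saturation and homogeneity, extend fiberwise over $X^p$ and $X^\iota$ using the ``uncountable or zero'' cardinality facts, treat the $H$-part as a saturated $\mathbb{F}_p$-vector space, and glue. So the overall plan is correct and matches the cited source in structure.

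Two of your justifications, however, do not hold as written, and they sit exactly at the load-bearing points you flag as ``obstacles''. First, the claim that the $X^p$-fiber over $\sigma_0(h(g'))$ is nonempty ``because $\sigma_0$ is a graph automorphism'' is a non sequitur: whether a $\sim$-class of $1^\nu$ is the handle class of some element of $X^p$ (i.e.\ of a proper element of $P$ lying outside $\langle Z(G)\cup 1^\nu\rangle$) is a property of the ambient group, not formally of the graph $\Gamma(G)$, so you must either show it is determined by the graph-type of the class (in a saturated model one can check it comes down to the class having infinitely many neighbours, via a compactness argument) or arrange that $\sigma_0$ preserves $G$-types of representatives; moreover, for the back-and-forth on $X^p$ you need corresponding nonempty fibers to have \emph{equal} cardinality (in a monster model they all have size $|G|$), which ``uncountable or zero'' alone does not give. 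Second, the gluing step is not ``functoriality'' plus Fact \ref{fact:type-definable_property_of_decomposition}: that fact only says that being a decomposition is type-definable, whereas what you need is the substantive content of Hodges' theorem --- that $\langle X\rangle$ is freely determined, in the class-$2$ exponent-$p$ variety, by the graph structure on $X^\nu$, the handle assignment, and the independence data, so that any bijection of $X$ respecting these induces an automorphism of $\langle X\rangle$, which then combines with the chosen automorphism of $\langle H'\rangle$. Note also that your argument (like that of \cite{CH2}) uses the saturation facts throughout, while the statement as phrased only invokes Fact \ref{fact:Decomposition_of_Mekler_group}; the saturation hypothesis must be carried along. With these points supplied, your sketch becomes essentially the proof of \cite[Lemma 2.14]{CH2}.
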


\begin{thm}\label{thm:Preservation_of_NATP}
For any infinite nice graph $C$, Th$(G(C))$ is NATP if and only if Th$(C)$ is NATP.
\end{thm}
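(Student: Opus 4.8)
The plan is to mimic the standard Mekler-transfer arguments (as in \cite{Ahn} and \cite{CH2} for NSOP$_1$ and NSOP$_2$), using the fact that the graph $C$ is \emph{interpretable} in the Mekler group $G(C)$ (Fact \ref{fact:interpretation_graph_in_Mekler_group}) for one direction, and the structural decomposition $G \cong \langle X\rangle \times \langle H\rangle$ together with the isomorphism-extension theorem (Fact \ref{fact:iso_ext_thm_for_Mekler_groups}) for the other. First I would handle the easy direction: if $\Th(C)$ has ATP, then since $C$ is interpretable in $G(C)$ and ATP (like all tree properties defined by consistency/inconsistency of instances of a single formula) is preserved under interpretation, $\Th(G(C))$ has ATP. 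Concretely, pull back a witnessing tree $(a_\eta)_{\eta\in 2^{<\omega}}$ for a formula $\varphi(x,y)$ of $\Th(C)$ through the interpretation: the interpreting formula gives a formula $\psi(\bar x,\bar y)$ over $G$, the defining sets of imaginaries give tuples $\bar a_\eta$ in $G$, and antichain-consistency resp.\ comparable-inconsistency transfer directly because the interpretation is a bijection on the relevant quotient.

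For the hard direction, suppose $\Th(C)$ is NATP and suppose toward contradiction that $\Th(G(C))$ has ATP, witnessed by a formula $\chi(x,\bar a_\eta)_{\eta\in 2^{<\omega}}$ which, by Theorem \ref{one-varable}, we may take in a single free variable $x$, and by the strong modeling property (Fact \ref{modeling property}) we may take $(\bar a_\eta)_{\eta\in 2^{<\omega}}$ strongly indiscernible; pass to a large strongly indiscernible tree $(\bar a_\eta)_{\eta\in 2^{<\kappa'}}$ as in Theorem \ref{thm:indisc_ext_NATP}. Working in a saturated $G \models \Th(G(C))$, I would use Fact \ref{fact:Decomposition_of_Mekler_group} to write $G \cong \langle X\rangle\times\langle H\rangle$ and—using the type-definability of the decomposition (Fact \ref{fact:type-definable_property_of_decomposition})—arrange that each parameter tuple $\bar a_\eta$ has its coordinates expressed through the transversal $X$ (and the central part $H$). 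The realization $b$ of $\{\chi(x,\bar a_\eta):\eta\in 2^\kappa\}$ likewise decomposes through $X$ and $H$. The idea is then to show that the ``graph part'' of this configuration—the images $[\bar a_\eta]$ under the graph interpretation $\Gamma$ of Fact \ref{fact:interpretation_graph_in_Mekler_group}, together with the relevant part of $b$—must itself witness ATP (or $k$-ATP for some $k$, which suffices by Lemma \ref{lem:k-ATP=ATP}) in $\Gamma(G)\models\Th(C)$, contradicting NATP of $\Th(C)$.

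The main obstacle, exactly as in the NSOP$_1$/NSOP$_2$ cases, is that the formula $\chi$ need not speak only about the graph part: it can involve the $X^p$-coordinates, the $X^\iota$-coordinates, the handle relation, and the central vector-space part $H$, all of which are governed by \emph{stable} (indeed QE) theories once the graph part is fixed. The strategy to overcome this is the standard ``stable part is harmless'' move: using Fact \ref{fact:iso_ext_thm_for_Mekler_groups}, one shows that any two antichain-configurations in the tree that agree on their graph-interpretation reduct are conjugate by an automorphism of $G$ (after absorbing the $X^p$/$X^\iota$/$H$ data into the same transversal-type, which is possible since those types are determined by stable data and can be matched by saturation), so the consistency/inconsistency pattern of $\{\chi(x,\bar a_\eta)\}$ depends only on the graph reduct $([\bar a_\eta])_\eta$. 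Concretely I would: (i) extract from $(\bar a_\eta)_{\eta\in 2^{<\kappa'}}$ a strongly indiscernible tree whose graph reducts $(c_\eta)_\eta:=(\Gamma\text{-image of }\bar a_\eta)$ are themselves strongly indiscernible in $\Gamma(G)$; (ii) use Corollary \ref{cor:indisc_ext_NATP} and the NATP criterion Theorem \ref{thm:indisc_ext_NATP} applied to $\Th(C)$ to find, along some branch $\rho\in 2^\kappa$, a $b'$-indiscernible sequence $(c_{\rho^\frown 0^i})_{i<\kappa'}$ with $b'\equiv_{c_\rho} (\text{graph part of }b)$; (iii) lift $b'$ back to an element $\tilde b$ of $G$ using Fact \ref{fact:iso_ext_thm_for_Mekler_groups} so that $\tilde b \equiv_{\bar a_\rho} b$ and $(\bar a_{\rho^\frown 0^i})_{i<\kappa'}$ is indiscernible over $\tilde b$; (iv) conclude via Theorem \ref{thm:indisc_ext_NATP} (direction 2$\Rightarrow$1 contrapositive) that $\Th(G(C))$ is NATP, contradicting our assumption. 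The delicate bookkeeping is in step (iii)—checking that the hypotheses of Fact \ref{fact:iso_ext_thm_for_Mekler_groups} (preservation of transversal-types, handle relation, and first-order type of $Y^\nu$) are met—and this is where I expect most of the work to go; but it is precisely the adaptation of the already-worked-out NSOP$_1$ and NSOP$_2$ arguments in \cite{Ahn, CH2}, replacing their tree-pattern by the antichain-tree pattern and invoking the ATP-specific tools (Theorems \ref{one-varable}, \ref{thm:indisc_ext_NATP} and Lemma \ref{lem:k-ATP=ATP}) in place of their analogues.
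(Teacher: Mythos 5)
Your outline follows the paper's own route: the easy direction by interpretability of $C$ in $G(C)$, and for the converse the decomposition $G=\langle X\rangle\times\langle H\rangle$ (Facts \ref{fact:Decomposition_of_Mekler_group}, \ref{fact:type-definable_property_of_decomposition}), strong indiscernibility, an application of the NATP criterion to the graph sort, and a lift via Fact \ref{fact:iso_ext_thm_for_Mekler_groups} contradicting $2$-inconsistency along a chain (your appeal to Theorem \ref{one-varable} is unnecessary: since $G=\langle X\rangle\times\langle H\rangle$, the realization is a tuple of terms in finitely many transversal and central elements, which is all the paper uses). The genuine gap is in how you dispose of the central part. Your claim that the consistency pattern of $\{\chi(x,\bar a_\eta)\}$ ``depends only on the graph reduct'', with the $X^p/X^\iota/H$ data ``matched by saturation'', is neither justified nor what is needed: the instances of the formula really do involve the $H$-coordinates $\bar h_\eta$ of the parameters, and matching types of central elements by saturation does not make the sequence $(\bar h_{\rho^\frown 0^i})_{i<\kappa'}$ indiscernible over the new central datum. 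In your step (iii), Fact \ref{fact:iso_ext_thm_for_Mekler_groups} only extends a transversal-type- and handle-preserving bijection, with the ``moreover'' clause matching central elements of the same $\Th(H)$-type; to conclude that $(\bar a_{\rho^\frown 0^i})_{i<\kappa'}$ is indiscernible over $\tilde b$ --- equivalently, to get $\tp(\bar r_\rho\bar h_\rho/r'_ch'_c)=\tp(\bar r_{\rho^\frown 0}\bar h_{\rho^\frown 0}/r'_ch'_c)$ and thereby push the realization from level $\rho$ to level $\rho^\frown 0$ --- you must also have produced $h'_c$ with $h_c\equiv_{\bar h_\rho}h'_c$ such that $(\bar h_{\rho^\frown 0^i})_{i<\kappa'}$ is indiscernible over $h'_c$. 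Nothing in your plan produces this.

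The paper obtains it by running the very same criterion inside $\Th(\langle H\rangle)$, a theory of $\BF_p$-vector spaces, hence stable and in particular NATP with quantifier elimination; ``the stable part is harmless'' means precisely ``apply the criterion to the stable sort as well'', not ``absorb it by saturation''. This forces a second point your plan omits: the branch $\rho$ must be the \emph{same} for the graph part and for the central part, since both are fed into a single automorphism. That is why the paper first colours $2^\kappa$ by the pairs $(\tp(\bar r^\nu_\eta/r^\nu_c),\tp(\bar h_\eta/h_c))$ using Corollary \ref{rem:tree_2kappa_coloring} to get one homogeneous set $S$ containing strong copies of every finite lexicographic $2^k$, fixes a single $\rho\in S$, and only then invokes Corollary \ref{cor:indisc_ext_NATP} (rather than Theorem \ref{thm:indisc_ext_NATP}, which would hand you two unrelated branches) separately in $\Th(C)$ and in $\Th(\langle H\rangle)$ for that fixed $\rho$. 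With these two repairs --- a parallel criterion application for $H$ and the joint homogenization fixing a common $\rho$ --- your outline becomes essentially the paper's proof; the remaining bookkeeping (uniformizing the terms $t_\eta$ via Remark \ref{rem:tree_monotone_decreasing_coloring}, placing handles at the head of $\bar r^\nu_\eta$, and strong indiscernibility over the finite overlap with $r_c{}^\frown h_c$) is routine.
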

\begin{proof}
The right direction is clear by the interpretability of $C$ in $G(C)$.

Suppose Th$(C)$ is NATP but Th$(G(C))$ has ATP. 
Fix cardinals $\kappa<\kappa'$ with $2^{|T|}<\kappa$, $\kappa=cf(\kappa)$, and $\kappa'=cf(\kappa')$, and assume the antichain tree property is witnessed by a formula $\varphi(x,y)$ and $(a_\eta)_{\eta\in2^{<\kappa'}}$ in some monster model $G$ of Th$(G(C))$. 
The Fact \ref{fact:Decomposition_of_Mekler_group} shows that for a transversal $X$ of $G$, there is $H$ satisfying \ref{fact:Decomposition_of_Mekler_group} such that $G=\langle X\rangle\times\langle H\rangle$.
Thus we can find tuples $\bar{r}_\eta=\bar{r}^\nu_\eta\null^\frown\bar{r}^p_\eta\null^\frown\bar{r}^\iota_\eta\in X$, $\bar{h}_\eta\in H$ and term $t_\eta(y_{r_\eta},y_{h_\eta})$ for each $\eta\in2^{<\kappa'}$ such that $a_\eta=t_\eta(\bar{r}_\eta,\bar{h}_\eta)$.

By Remark \ref{rem:tree_monotone_decreasing_coloring} with the fact that $|T|<\kappa'=cf(\kappa')$, there is an embedding $\iota:2^{<\kappa'}\rightarrow 2^{\kappa'}$ such that 
$t_\eta$ is constant for all $\eta\in \iota[2^{<\kappa'}]$. 
Hence we may assume that for $(a_\eta)_{\eta\in2^{<\kappa'}}$, there exists a term $t(y_r,y_h)$ such that each $a_\eta$ is of the form $t(\bar{r}_\eta,\bar{h}_\eta)$ for some $\bar{r}_\eta=\bar{r}^\nu_\eta\null^\frown\bar{r}^p_\eta\null^\frown\bar{r}^\iota_\eta\in X$ and $\bar{h}_\eta\in H$. We may further assume handles of elements in $\bar{r}^p_\eta$ are placed to the beginning of $\bar{r}^\nu_\eta$ for each $\eta\in2^{<\kappa'}$.

Let $\bar{b}_\eta=\bar{r}_\eta\null^\frown\bar{h}_\eta$ and let $y'$ be a tuple of variables with the length $|\bar{b}_\eta|$ which is constant. If we define $\varphi'(x,y')=\varphi(x,t(y'))$, then this formula and $(b_\eta)_{\eta\in2^{<\kappa'}}$ witness ATP again.

By the strong modeling property (Fact \ref{modeling property}), we may assume $(b_\eta)_{\eta\in2^{<\kappa'}}$ is a strongly indiscernible tree. Also, by Fact \ref{fact:type-definable_property_of_decomposition}, we can ensure that $\bar{r}_\eta=\bar{r}^\nu_\eta\null^\frown\bar{r}^p_\eta\null^\frown\bar{r}^\iota_\eta$ and $\bar{h}_\eta$ are still in transversal $X$ and independent set $H\subseteq Z(G)$ such that $G=\langle X\rangle\times\langle H\rangle$.

Find a realization $c=s(r_c,h_c)$ of $\{\varphi'(x,\bar{b}_\eta):\eta\in2^\kappa\}$ where $s$ is a term and $r_c=r^\nu_c\null^\frown r^p_c\null^\frown r^\iota_c\in X$ and $h_c\in H$. Again, we assume handles of elements in $r^p_c$ placed to the beginning of $r^\nu_c$. 
If we define $\psi(x',y')=\varphi'(s(x'),t(y'))$, then for any $\eta\lhd\eta'\in2^{<\kappa'}$, $\{\psi(x',\bar{b}_\eta),\psi(x',\bar{b}_{\eta'})\}$ is inconsistent and $r_c^\frown h_c$ realizes $\{\psi'(x',\bar{b}_\eta):\eta\in2^\kappa\}$. Moreover, since $r_c^\frown h_c \cap \bigcup\{\bar{b}_\eta:\eta\in2^\kappa\}$ is finite, we may assume the tree is strongly indiscernible over $r_c^\frown h_c \cap \bigcup\{\bar{b}_\eta:\eta\in2^\kappa\}$.

Now consider $r^\nu_c$ and the tree $(\bar{r}^\nu_\eta)_{\eta\in2^{<\kappa'}}$ in $X^\nu$ and $h_c$ and the tree $(\bar{h}_\eta)_{\eta\in2^{<\kappa'}}$ in $H$. 
Note that by Fact \ref{fact:interpretation_graph_in_Mekler_group}, $X^\nu$ can be regarded as a model of Th$(C)$, which is a theory of graph having NATP. Also, since Th$(\langle H\rangle)$ is a theory of vector spaces, it is stable (thus NATP too), and has quantifier elimination.

Let $S(r^\nu_c)$ be the set of all $|\bar{r}^\nu_0|$-types over $r^\nu_c$ and $S(h_c)$ be the set of all $|\bar{h}_0|$-types over $h_c$. Let $f:2^{<\kappa'}\rightarrow S(r^\nu_c)\times S(h_c)$ be a function such that $f(\eta)=(\text{tp}(\bar{r}^\nu_\eta/r^\nu_c),\text{tp}(\bar{h}_\eta/h_c))$. By Corollary \ref{rem:tree_2kappa_coloring}, we can find $S\subseteq 2^\kappa$ such that 
\begin{enumerate}
    \item[(i)] for any $\eta,\eta'\in S$, $\bar{r}^\nu_\eta\equiv_{r^\nu_c} \bar{r}^\nu_{\eta'}$,
    \item[(ii)] for any $\eta,\eta'\in S$, $\bar{h}_\eta\equiv_{h_c} \bar{h}_{\eta'}$, and
    \item[(iii)] for any $k<\omega$, there exists some tuple in $S$ which is strongly isomorphic to the lexicographic enumeration of $2^k$,
\end{enumerate}
Fix an element $\rho$ in $S$. 
Then by Corollary \ref{cor:indisc_ext_NATP}, there exists $r'^\nu_c$ such that $r_c^\nu\equiv_{b_\rho}r'^\nu_c$ and $(\bar{r}^\nu_{\rho^\frown0^i})_{i<\kappa'}$ is indiscernible over $r'^\nu_c$.
We do the similar work for $\langle H\rangle$, then find some $h'_c$ such that $h_c\equiv_{h_\rho}h'_c$ and $(\bar{h}_{\rho^\frown0^i})_{i<\kappa'}$ is indiscernible over $h'_c$.

By Fact \ref{fact:iso_ext_thm_for_Mekler_groups} and the strong indiscernibility of $(\bar{b}_\eta)_{\eta\in2^{<\kappa'}}$ over $r_c^\frown h_c \cap \bigcup\{\bar{b}_\eta:\eta\in2^\kappa\}$, we can find a handle preserving bijection and then extend it to an automorphism of $G$ to have
that tp($r_c h_c/\bar{r}_\rho\bar{h}_\rho$)=tp($r'_c h'_c/\bar{r}_\rho\bar{h}_\rho$) and tp($\bar{r}_\rho\bar{h}_\rho/r'_c h'_c$)= tp($\bar{r}_{\rho^\frown0}\bar{h}_{\rho^\frown0}/r'_c h'_c$). So, we have $G\models\psi(r'_c h'_c,\bar{r}_\rho\bar{h}_\rho)\wedge\psi(r'_c h'_c,\bar{r}_{\rho^\frown0}\bar{h}_{\rho^\frown0})$, which is a contradiction.
\end{proof}

We will later observe that there is an NATP theory of countable language which is SOP$_1$ and TP$_2$ (See Example \ref{ex:example_sop_TP2_NATP}).
Thus, by Fact \ref{fact:nice_graph_conversion}, and the preservation of NSOP$_1$, TP$_2$, and NATP in Mekler's construction, we can obtain an NATP pure group having SOP$_1$ and TP$_2$.

\subsection{Fields with NATP}\label{Subsection:Chat-Ramsey_PAC_fields}
We recall several facts on PAC fields, which will be used to give a criteria for PAC fields to be (N)ATP in this subsection. A field $K$ is called \textit{pseudo algebraically closed (PAC)} if any absolutely irreducible variety over $K$ has a $K$-rational point. We consider a PAC field as a $\CL_{ring}$-structure for the ring language $\CL_{ring}=\{+,-,\times;0,1\}$. And we only consider fields contained in a fixed algebraically closed field $\Omega$ of big enough cardinality. For a subfield $K$ of $\Omega$, we write $K^s$ for the separable closure of $K$ in $\Omega$ and $G(K)$ for the absolute Galois group, $G(K^s/K)$, of $K$.

For PAC fields, having NATP is a non-trivial condition, that is, there is a PAC field whose theory has ATP. 
We recall the existence of a PAC field encoding a given graph $(V,E)$ where $E$ is a binary irreflexive symmetric relation on a set $V$.
\begin{fact}\label{fact:graph_coding_PACfield}\cite[Subection 28.20]{FJ}
Let $(V,E)$ be a graph. Then, there is a PAC field $K$ and $\CL_{ring}$-formulas $\Phi_V$ such that $(\Phi_V(K),\Phi_E(K))$ is a graph isomorphic to $(V,E)$.
\end{fact}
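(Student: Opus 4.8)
This statement is a theorem of Fried and Jarden, and the plan is to recall the strategy behind it. The engine is the interplay, for PAC fields, between the first-order theory of $K$ and the inverse system of finite continuous quotients of the absolute Galois group $G(K)$: by the Cherlin--van den Dries--Macintyre elementary-equivalence analysis, $\Th(K)$ is determined by, and controls, that quotient system together with the isomorphism type of the core field $K\cap\overline{\mathbb{Q}}$ (which we hold fixed, working say in characteristic $0$), so that a sufficiently large supply of Galois-theoretic conditions on $K$ become expressible by $\CL_{ring}$-formulas. Since moreover every projective profinite group is realized as $G(K)$ for some PAC field $K$ (Lubotzky--van den Dries), the task reduces to coding the graph $(V,E)$ into a projective profinite group $G$ so that the vertex set and the edge relation become uniformly first-order visible in the lattice of finite quotients of $G$.

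To code $(V,E)$ I would attach to each vertex $v$ a finite ``marker'' group $M_v$, taken pairwise non-isomorphic and rigid enough that ``$K$ has a Galois extension with group $\cong M_v$'' is an $\CL_{ring}$-sentence, and build $G$ as a closed subgroup of a sufficiently large free profinite group --- so that $G$ remains projective --- in such a way that, for $v\ne w$, the marker extensions attached to $v$ and $w$ embed into a common finite Galois extension of a fixed prescribed shape precisely when $(v,w)\in E$. Then $\Phi_V(x)$ expresses ``$x$ is a code (say the coefficient list of a defining polynomial, or a generating tuple) of a marker Galois extension of $K$'', where the PAC property is what allows ``there is a Galois extension of type $\tau$'' to be rephrased as ``there is a tuple in $K$ solving the equations cutting out a splitting configuration of type $\tau$'', and $\Phi_E(x,y)$ expresses ``the extensions coded by $x$ and $y$ jointly embed into the prescribed common extension''. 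Then $\Phi_V(K)$ is in canonical bijection with $V$ and $\Phi_E(K)$ with $E$, so $(\Phi_V(K),\Phi_E(K))\cong(V,E)$; if parameters are disallowed the marker data is arranged to be $\emptyset$-definable, and otherwise one fixes a finite parameter tuple naming the base data, as is standard in such coding arguments.

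The step I expect to be the main obstacle is justifying the passage from the profinite side to the pure field: the Cherlin--van den Dries--Macintyre theorem is about \emph{theories}, not a literal interpretation of $G(K)$ inside $K$, so one must verify by hand that each marker extension and each ``joint embedding'' witnessing an edge is captured by a genuine $\CL_{ring}$-formula --- exactly the point where the PAC hypothesis (rational points on absolutely irreducible varieties, hence on the relevant splitting configurations) is used essentially. A secondary issue is ruling out accidental coincidences, so that no finite quotient of $G$ makes a non-edge look like an edge and distinct vertices are never conflated; this is handled by choosing the marker groups and the amalgamating relations generically, in the same spirit as the use of ``niceness'' of the graph in the Mekler construction of Subsection~\ref{Subsection:mekler}. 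Finally, when $V$ is infinite and there are not enough non-isomorphic small groups to serve as markers, one passes to a free profinite group of rank $\ge|V|$ and codes vertices by independent topological generators together with prescribed relations (or uses sufficiently rich finite group-theoretic invariants), so that $|V|$-many distinguishable markers are available.
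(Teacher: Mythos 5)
The paper does not prove this statement at all: it is quoted as a Fact from Fried--Jarden \cite{FJ}, so there is no in-paper argument to match your sketch against; what can be assessed is whether your outline would actually reconstruct such a proof, and there I see a genuine gap.

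The gap is uniform definability of the vertex set. You code the vertices by pairwise non-isomorphic finite ``marker'' groups $M_v$ and let $\Phi_V(x)$ say ``$x$ is a code (coefficient list, generating tuple) of a marker Galois extension''. But $\Phi_V$ must be a single $\CL_{ring}$-formula, with a fixed finite tuple of free variables, while your codes have unbounded length and degree as soon as $V$ is infinite (infinitely many isomorphism types of markers, hence extensions of unbounded degree); ``the Galois group of the extension coded by $x$ is isomorphic to some $M_v$'' is then an infinite disjunction, not a formula, and for uncountable $V$ the scheme fails outright since there are only countably many finite groups --- your fallback via topological generators of a large free profinite group offers no mechanism by which a fixed ring-formula evaluated in $K$ could distinguish those generators. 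This is not a corner case: the statement is for arbitrary graphs, and its only use in the paper (Remark \ref{rem:PACfield_with_ATP}) needs an infinite graph, namely one bi-interpretable with Skolem arithmetic. The construction behind the cited fact (in \cite{FJ}, in the spirit of \cite{GDM}) is uniform in exactly the place where yours is not: all vertices are represented by field elements through one fixed kind of small extension (e.g.\ square classes $a$, i.e.\ quadratic extensions $K(\sqrt a\,)$), so $\Phi_V$ is one formula in one variable, and the edge relation is one fixed existential condition on pairs $(a,b)$ (solvability of a norm/quaternion-type equation, equivalently a prescribed embedding problem for $K(\sqrt a,\sqrt b\,)/K$). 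The real content is then the construction of the PAC field itself --- prescribing a suitable projective group as absolute Galois group (Lubotzky--van den Dries, as you say) and controlling which of these fixed finite embedding problems are solvable --- so that exactly the prescribed incidences hold and no unwanted square classes enter $\Phi_V(K)$; this is precisely the ``accidental coincidences'' point that your sketch waves at but which carries the proof. Your appeal to Cherlin--van den Dries--Macintyre does not substitute for it: that result says the theory of a PAC field is controlled by the Galois-theoretic data, but it does not produce the formulas $\Phi_V,\Phi_E$; each condition must be written out as an existential statement about rational points, which is where PAC is used.

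A secondary issue, present in your sketch and worth flagging: each vertex is represented by infinitely many codes (all generating tuples of the same extension, resp.\ a whole square class), so to get $(\Phi_V(K),\Phi_E(K))$ literally isomorphic to $(V,E)$ one must in addition quotient by a definable equivalence relation or arrange canonical representatives; as written, your $\Phi_V(K)$ is not in bijection with $V$.
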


\begin{remark}\label{rem:PACfield_with_ATP}
There is a PAC field whose theory has ATP.
\end{remark}
\begin{proof}
There is a structure $M$ of finite language whose theory has ATP ({\it e.g.} $(\BN,\cdot)$, see Example \ref{ex:Skolem arithmetic ATP}) and there is a graph $(V,E)$ which is bi-interpretable with $M$. By Fact \ref{fact:graph_coding_PACfield}, there is a PAC field $K$ interpreting the graph $(V,E)$ so that $\Th(K)$ has ATP.
\end{proof}

Next, we give a sufficient condition for PAC fields to have NATP via their Galois groups. We recall the notion of the complete system of a profinite group ({\it cf.} \cite[Subsection 2.1]{C2}). Let $\CL_{G}$ be a $\omega$-sorted language consisted with the following relations:
\begin{itemize}
    \item a binary relation $\le_{n,m}$ for $n\ge m$;
    \item a binary relation $C_{n,m}$ for $n\ge m$;
    \item a ternary relation $P_n$ for $n\in \omega$.
\end{itemize}
If there is no confusion, we write $\le$, $C$, and $P$ for $\le_{n,m}$, $C_{n,m}$ and $P_n$ respectively. For a profinite group $G$, the \textit{complete system} of $G$ is given as follows:
\begin{itemize}
    \item For each $n\in \omega$, the sort $n$ is the disjoint union of $G/N$ for each open normal subgroup $N$ of $G$ with $[G:N]\le n+1$ so that the sort $0$ consists of the single element $G$.
    \item The relations $\le$, $C$, and $P$ are interpreted as follows:
    \begin{itemize}
        \item For $gN\in n$ and $hM\in m$, $$gN\le hM \Leftrightarrow N\subset M.$$
        \item For $gN\in n$ and $hM\in m$, $$C(gN,hM)\Leftrightarrow gN\subset hM.$$
        \item For $g_1N_1,g_2N_2,g_3N_3\in n$, $$P(g_1N_1,g_2N_2,g_3N_3)\Leftrightarrow N_1=N_2=N_3(=:N),\ g_1g_2N=g_3N.$$
    \end{itemize}
\end{itemize}

\noindent A subset $S_0$ of a complete system $S$ is called a \textit{subsystem} of $S$ if it satisfies the following:
\begin{enumerate}
    \item $\forall \sigma,\tau\in S_0,\exists \rho\in S_0(\rho\le \sigma\wedge \rho\le \tau)$;
    \item $\forall \sigma\in S_0,\forall \tau\in S(\sigma\le \tau\rightarrow \tau\in S_0)$.
\end{enumerate}
For a subset $A$ of $S$, we write $\langle A\rangle $ for the subsystem of $S$ generated by $A$. For a field $K$, we write $SG(K)$ for the complete system of $G(K)$. Let $\varphi:K^s\rightarrow L^s$ be an isomorphism with $\varphi[K]=L$. Then, $\varphi$ induces an isomorphsim $$\Phi:G(L)\rightarrow G(K),\sigma\mapsto \varphi^{-1}\circ \sigma\circ \varphi.$$ This isomorphism induces an isomorphism $S\Phi:SG(K)\rightarrow SG(L)$, called the \textit{double dual} of $\varphi$.

Using complete systems, we have the following type-amalgamation for PAC fields, proved by Chatzidakis.

\begin{fact}\label{fact:zoe_type-amalgamation_PACfield}\cite[Theorem 3.1]{C2}
Let $F$ be a PAC field, and let $E,A,B,C_1,C_2$ be algebraically closed subsets of $F$, with $E$ contained in $A,B,C_1,C_2$. Assume that $A\cap B=E$, that $A$ and $C_1$, and $B$ and $C_2$ are SCF-independent over $E$ and that if the degree of imperfection of $F$ is finite, then $E$ contains a $p$-basis of $F$. Moreover, assume that there is an $E^s$-isomorphism $\varphi:C_1^s\leftrightarrow C_2^s$ such that $\varphi(C_1)=C_2$, and that there is $S_0\subset SG(F)$, and elementary (in $SG(F)$) isomorphisms $$S\Psi_1:\langle SG(C_1),SG(A)\rangle\rightarrow \langle S_0,SG(A)\rangle,$$ $$S\Psi_2:\langle SG(C_2),SG(B)\rangle\rightarrow \langle S_0,SG(B)\rangle$$
such that
\begin{enumerate}
    \item $S\Psi_1$ is the identity on $SG(A)$, $S\Psi_2$ is the identity on $SG(B)$, and $$S\Psi_i(SG(C_i))=S_0\,;$$
    \item if $S\Phi:SG(C_1)\rightarrow SG(C_2)$ is the morphsim double dual to $\varphi$, then $$S\Psi_2\circ S\Phi=S\Psi_1\restriction_{SG(C_1)}.$$
\end{enumerate}
Then, in some elementary extension $F^*$ of $F$, there is $C$ whcih is SCF-independent from $AB$ over $E$ realizes $\tp(C_1/A)\cup \tp(C_2/B)$ and with $SG(C)=S_0$ (the variables for $\tp(C_1/A)$ and $\tp(C_2/B)$ are identified via $\varphi$).
\end{fact}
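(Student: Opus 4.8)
The plan is to split the amalgamation into a purely field-theoretic part and a Galois-theoretic (complete-system) part, carry out each, and then glue the two using the description of complete types over algebraically closed subsets in a PAC field. (This is Chatzidakis's theorem, so strictly speaking one is reconstructing her argument; we only indicate the shape of the proof since it is quoted here as a black box.)

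First I would perform the field-theoretic amalgamation. Using $\varphi$ to identify $C_1$ and $C_2$ with a single abstract field $C\supseteq E$, and exploiting $A\cap B=E$ together with the two SCF-independence hypotheses, one builds a common overfield $D$ generated by $ABC$ in which $AC\cong_A AC_1$ and $BC\cong_B BC_2$ as fields, with $C$ SCF-independent from $AB$ over $E$. The clause about finite degree of imperfection (a $p$-basis of $F$ contained in $E$) is precisely what keeps the amalgam separable and keeps the SCF-independence witnessed. At this stage only the quantifier-free/field part of $\tp(C_1/A)\cup\tp(C_2/B)$ has been realized.

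Next I would do the Galois-theoretic amalgamation: choose the placement of $C^s$ inside a fixed separable closure so that the restriction data realize the prescribed subsystem $S_0\subseteq SG(F)$, matching $S\Psi_1$ over $SG(A)$ and $S\Psi_2$ over $SG(B)$, coherently with $\varphi$ via the double-dual condition (2). The key external input here is that the absolute Galois group of any PAC field is projective (Ax), which allows one to lift embeddings of profinite groups into $G(F)$ over prescribed finite quotients; this realizes the abstract amalgam $\langle S_0, SG(A), SG(B)\rangle$ of complete systems as an actual configuration of closed subgroups of $G(F^*)$ for a sufficiently saturated $F^*\succeq F$. The hypothesis that $S\Psi_1$ and $S\Psi_2$ are \emph{elementary} in $SG(F)$ is exactly what forces the glued complete system to sit as an elementary substructure of $SG(F^*)$, which the last step needs.

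Finally I would glue. By the characterization of complete types over an algebraically closed subset $A$ in a PAC field --- roughly, $\tp(\bar{c}/A)$ is pinned down by the isomorphism type of the field $A(\bar{c})$ over $A$ together with the elementary type, inside $SG(F)$, of the complete system of $\operatorname{acl}(A\bar{c})$ over $SG(A)$ --- the field $C$ built above, equipped with the complete system $S_0$, simultaneously realizes $\tp(C_1/A)$ over $A$ and $\tp(C_2/B)$ over $B$; hence the union is consistent and is realized in the elementary extension $F^*$, with $SG(C)=S_0$ and $C$ SCF-independent from $AB$ over $E$ by construction. I expect the main obstacle to be the second step: one must carry out the profinite-group amalgamation so that it is at once compatible with the field amalgamation of the first step (so that it actually arises from some placement of $C^s$ relative to $A^s$ and $B^s$), coherent with $\varphi$ in the sense of condition (2), and realizable as an elementary configuration inside $SG(F^*)$; balancing projectivity (which supplies the group-theoretic lifts) against the first-order constraints on complete systems is the technical heart of the argument.
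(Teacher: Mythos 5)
This statement is not proved in the paper at all: it is imported verbatim as Fact \ref{fact:zoe_type-amalgamation_PACfield}, namely Chatzidakis's amalgamation theorem \cite[Theorem 3.1]{C2}, and is used as a black box in the proof of Theorem \ref{thm:PACfield_NATP_via_Galoisgroup}. So there is no in-paper argument to compare your proposal against; the only meaningful comparison is with Chatzidakis's original proof, and your outline does match its broad architecture: amalgamate the fields using $A\cap B=E$, the two SCF-independence hypotheses and the $p$-basis clause; arrange the Galois-theoretic data prescribed by $S\Psi_1$, $S\Psi_2$ and the double-dual condition; and conclude via the description of complete types in PAC fields in terms of the field isomorphism type together with the type of the associated complete system.

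That said, what you have written is a strategy sketch rather than a proof, and the step you yourself label ``the technical heart'' is exactly the content of the theorem. Two concrete gaps. First, step (ii) is misframed: $S_0$ is by hypothesis already a subsystem of $SG(F)$ (equivalently, it corresponds to a closed subgroup of $G(F)$), so the task is not to realize an abstract amalgam of complete systems as a configuration of closed subgroups of $G(F^*)$ via projectivity; it is to produce an actual field $C$ in an elementary extension whose relative algebraic closures with $A$ and with $B$ induce precisely the prescribed systems, i.e.\ so that $SG(\acl(CA))$ and $SG(\acl(CB))$ are identified with $\langle S_0,SG(A)\rangle$ and $\langle S_0,SG(B)\rangle$ compatibly with $S\Psi_1,S\Psi_2$ and condition (2). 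This is where the embedding lemma for PAC fields (the place where projectivity of $G(F)$ enters) and the \emph{elementarity} of the $S\Psi_i$ are actually used, and none of that is carried out or even precisely stated in your sketch. Second, the gluing step silently invokes a precise statement --- that in a PAC field $\tp(C/A)$ is determined by the $A$-isomorphism type of $\acl(AC)$ together with the elementary type of $SG(\acl(AC))$ over $SG(A)$ inside $SG(F)$, plus the imperfection data --- and this determination theorem is itself a nontrivial result that must be quoted in its exact form (it is the reason the hypotheses ask the $S\Psi_i$ to be elementary in $SG(F)$); writing ``roughly'' at this point leaves the key inference unjustified. So your proposal is a reasonable reconstruction of the shape of Chatzidakis's argument, but it does not constitute a proof of the Fact, nor does it diverge from her route in any essential way.
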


\begin{theorem}\label{thm:PACfield_NATP_via_Galoisgroup}
For a PAC field $F$, $\Th(F)$ has NATP if $\Th(SG(F))$ has NATP.
\end{theorem}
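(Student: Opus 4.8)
The plan is to prove the contrapositive: if $\Th(F)$ has ATP, then $\Th(SG(F))$ has ATP. Two inputs carry the argument. The tree-indiscernibility criterion for NATP (Theorem~\ref{thm:indisc_ext_NATP}, together with its one-variable refinement Theorem~\ref{thm:improved_indisc_ext_NATP}) is used to recast ATP of $\Th(F)$ as the failure of an \emph{indiscernible extension} of a type along a $\trn$-chain of algebraically closed subfields; and Chatzidakis' type-amalgamation theorem (Fact~\ref{fact:zoe_type-amalgamation_PACfield}) is used to transfer such configurations between $F$ and its complete system. We work inside a monster model $\M\models\Th(F)$ and its companion $SG(\M)\models\Th(SG(F))$ interpreted from $\M$, and use Lemma~\ref{lem:k-ATP=ATP} freely to ignore the sizes of inconsistent chains.

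Assume $\Th(F)$ has ATP. Fix cardinals $2^{|T|}<\kappa<\kappa'$ with $\mathrm{cf}(\kappa)=\kappa$ and $\mathrm{cf}(\kappa')=\kappa'$, and a witness $\varphi(x,y)$ and $(a_\eta)_{\eta\in 2^{<\kappa'}}$, strongly indiscernible by Fact~\ref{modeling property}. Since ``$\bar x$ enumerates an algebraically closed subfield of $\M$'' (with a fixed $p$-basis of $\M$ included when the degree of imperfection is finite) is a partial type, we may replace each $a_\eta$ by an enumeration of $\acl(a_\eta)$ and re-extract, by strong local basedness, a strongly indiscernible tree still witnessing ATP; so we assume $a_\eta$ enumerates $E_\eta:=\acl(a_\eta)$. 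Take $b\models\{\varphi(x,a_\eta):\eta\in 2^\kappa\}$ --- consistent because $2^\kappa$ is an antichain of $2^{<\kappa'}$ --- and use Corollary~\ref{rem:tree_2kappa_coloring}(b) on $\eta\mapsto\tp(a_\eta/b)$ to get $S\subseteq 2^\kappa$ on which $\tp(a_\eta/b)$ is constant and which contains a strong copy of the lexicographic enumeration of $2^k$ for every $k$. Fix $\rho\in S$, set $E_\rho^{*}:=\bigcup_{i<\kappa'}E_{\rho^\frown 0^i}$ and $p(x,y):=\tp(b,a_\rho)$. The partial type $q(x):=\bigcup_{i<\kappa'}p(x,a_{\rho^\frown 0^i})$ is inconsistent, for any realization would satisfy $\{\varphi(x,a_{\rho^\frown 0^i}):i<\kappa'\}$ along pairwise $\trn$-comparable nodes, contradicting ATP of $\varphi$. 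In other words, $\tp(b/E_\rho)$ has no completion over $E_\rho^{*}$ realized by an element over which $(E_{\rho^\frown 0^i})_{i<\kappa'}$ is indiscernible; the goal is to produce one anyway.

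Let $C:=\acl(b,E_\rho)$ and form the tree $(\bar s_\eta)_{\eta\in 2^{<\kappa'}}$ of complete systems, $\bar s_\eta$ an enumeration of the subsystem $SG(E_\eta)\subseteq SG(\M)$; because ``enumerates the complete system of the absolute Galois group of an algebraically closed subfield of $\M$'' is again type-definable, $(\bar s_\eta)$ may be taken strongly indiscernible, coherently with $(E_\eta)$. Now invoke the hypothesis that $\Th(SG(F))$ is NATP through Theorem~\ref{thm:indisc_ext_NATP} (and Corollary~\ref{cor:indisc_ext_NATP}), applied to $(\bar s_\eta)$ and an enumeration of $SG(C)$ over $SG(E_\rho)$: after matching branches by a further application of Corollary~\ref{rem:tree_2kappa_coloring}(b) and the strong homogeneity of $(\bar s_\eta)$, this yields a subsystem $S_0$ --- a copy of $SG(C)$ over $SG(E_\rho)$ --- such that $(SG(E_{\rho^\frown 0^i}))_{i<\kappa'}$ is indiscernible over $S_0$. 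Feeding $S_0$ into Fact~\ref{fact:zoe_type-amalgamation_PACfield} with $E=E_\rho$ (taking the independent bases along the chain $(E_{\rho^\frown 0^i})_i$, the attendant elementary isomorphisms being the identity on the bases and coherent via the double dual of the identity on $C^s$, and arranging SCF-independence and the $p$-basis condition as above), iterated along the chain and then closed up by compactness, produces $C^{*}$ realizing $\tp(C/E_\rho)$, SCF-independent from $E_\rho^{*}$ over $E_\rho$, with $SG(C^{*})=S_0$. Since types over an algebraically closed subfield of a PAC field are determined by their ACF-reduct together with the relevant complete-system datum, SCF-freeness of $C^{*}$ over $E_\rho$ and indiscernibility of $S_0$ over the chain give that $(E_{\rho^\frown 0^i})_{i<\kappa'}$, hence $(a_{\rho^\frown 0^i})_{i<\kappa'}$, is indiscernible over $C^{*}$. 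Taking $b'\in C^{*}$ the image of $b$ under the isomorphism $C\to C^{*}$ over $E_\rho$, we get $b\equiv_{a_\rho}b'$ and $(a_{\rho^\frown 0^i})_{i<\kappa'}$ indiscernible over $b'$, so $b'\models q(x)$ --- contradicting the second paragraph. Hence $\Th(F)$ has no ATP.

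The crux, and expected main obstacle, is this downward transfer: one must check that the indiscernible extension produced by Theorem~\ref{thm:indisc_ext_NATP} on the complete-system side really assembles into the precise hypotheses of Fact~\ref{fact:zoe_type-amalgamation_PACfield} --- the two families of elementary isomorphisms being the identity on their bases and coherent through the double dual, plus SCF-independence and the finite-degree-of-imperfection caveat --- and that this survives the passage from one amalgamation step to an indiscernible $\kappa'$-chain by compactness, with the bases chosen consistently along the iteration. One also needs the routine but unavoidable facts that the type-definability reductions keep $(E_\eta)$ and $(\bar s_\eta)$ coherent, that strong indiscernibility lifts from the field tree to the tree of complete systems, and that model-theoretic $\acl$ in PAC fields behaves as expected; the cardinal-arithmetic conditions are exactly those of Theorem~\ref{thm:indisc_ext_NATP}.
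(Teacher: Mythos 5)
Your high-level plan (contrapositive, use NATP of the complete system to produce a compatible Galois datum $S_0$, then push it down with Fact~\ref{fact:zoe_type-amalgamation_PACfield}) points in the right direction, but the step you yourself flag as ``the crux'' is exactly where the argument is missing, and the route you sketch for it does not go through as stated. First, you aim to realize the \emph{entire} chain type $q(x)=\bigcup_{i<\kappa'}p(x,a_{\rho^\frown 0^i})$ by iterating the amalgamation theorem along a $\kappa'$-chain ``closed up by compactness.'' Fact~\ref{fact:zoe_type-amalgamation_PACfield} amalgamates exactly two types over a common base under strong hypotheses (SCF-independence, $A\cap B=E$, explicit elementary isomorphisms of subsystems that are identities on $SG(A)$, $SG(B)$ and coherent with a double dual), and each iteration would require re-verifying all of these for the newly produced amalgam; compactness does not absorb this, since the global condition $SG(C^*)=S_0$ is not a first-order condition preserved along a limit of separate amalgamation steps. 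The paper avoids this entirely: since ATP makes comparable pairs $2$-inconsistent, it suffices to realize $\varphi(x;a_{0^{\omega}})\wedge\varphi(x;a_{0^{\omega\frown}1})$ at a \emph{single} comparable pair, i.e.\ one application of the amalgamation theorem, with NATP of $SG(F)$ used only to show that $\tp(SG(B)/SG(A_0))\cup\tp(SG(B)/SG(A_\emptyset))$ is consistent (an explicit ATP-witness argument inside $SG(F)$ using the strong indiscernibility of the tree of complete systems).

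Second, you never establish the hypotheses of Fact~\ref{fact:zoe_type-amalgamation_PACfield}: ``arranging SCF-independence and the $p$-basis condition as above'' refers to nothing in your construction. The paper's essential device here is absent from your proposal: it uses Remark~\ref{rem:arbitrary_configuration_embedding_ATP} to place $b$ against a carefully chosen antichain $\{0^{i\frown}1:i<\omega+\omega\}\cup\{0^{\omega+\omega\frown}\eta:\eta\in2^\omega\}$, takes $E$ to be the Skolem hull of the first $\omega$ levels so that $(A_i)_{i<\omega}$ is an $E$-finitely satisfiable Morley sequence (enumerated in reverse), and then applies Kim's lemma in SCF to get $B\ind^{\mathrm{SCF}}_E A_0$ and $A_0\ind^{\mathrm{SCF}}_E A_\emptyset$; without some such base there is no reason the independence hypotheses hold. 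Third, your invocation of Theorem~\ref{thm:indisc_ext_NATP} on the complete-system side is applied to an enumeration of $SG(C)$, an infinite tuple, whereas the theorem is stated for finite $b$; and even granting an extension, its output (some $b'$ with $b\equiv_{a_\rho}b'$ and indiscernibility) does not hand you the subsystem $S_0\subseteq SG(F)$ together with the elementary isomorphisms $S\Psi_1,S\Psi_2$ and double-dual coherence that the amalgamation theorem demands. Finally, the closing claim that types over algebraically closed subsets of a PAC field are determined by their ACF-reduct plus the complete-system datum, and that this yields indiscernibility of the chain over $C^*$, is asserted without proof and is essentially the content being proved; as written, the proposal is a plan whose decisive steps remain open.
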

\begin{proof}
We follow the proof scheme of \cite[Proposition 7.2.8]{Ram1}. We assume that $F$ is a monster model of $\Th(F)$ and fix a Skolemization of $F$ so that $\acl=\dcl$ and any definably closed subset is an elementary submodel. In the proof, indiscernibility will always mean with respect to this structure, but $\acl$ means model-theoretic algebraic closure in $F$ in the ring language. Note that the complete system $SG(F)$ is uniformly interpretable in $(F^s,F)$ ({\it cf.} \cite[Fact 7.1.7]{Ram1}).

Suppose $\Th(F)$ has ATP, witnessed by a formula $\varphi(x;y)$ via the parameters $(a_{\eta})_{\eta\in 2^{<\mu}}$, which forms a strongly indiscernible tree, for large enough cardinal $\mu$. By Remark \ref{rem:arbitrary_configuration_embedding_ATP} and by taking an automorphism, there is $b$ such that for an antichain $X=\{0^{i\frown}1:i<\omega+\omega\}\cup\{0^{\omega+\omega\frown}\eta:\eta\in 2^{\omega}\}\subset 2^{<\mu}$,
\begin{itemize}
    \item for all $\eta'\in X$, $\models \varphi(b;a_{\eta'})$;
    \item for all $\eta_1,\eta_2\in X$, $a_{\eta_1}\equiv_b a_{\eta_2}$.
\end{itemize}
\noindent Note that $(a_{0^{i\frown}1})_{i<\omega+\omega}$ is $\{a_{0^{\omega+\omega\frown}\eta}:\eta\in 2^{\omega}\}$-indiscernible. Put $a_i:=a_{0^{i\frown}1}$ for each $i<\omega+\omega$. By Ramsey, compactness, and automorphism, we may assume that $(a_i)_{i<\omega+\omega}$ is $\{b\}\cup\{a_{0^{\omega+\omega\frown}\eta}:\eta\in 2^{\omega}\}$-indiscernible.

Let $E$ be the Skolem hull of $(a_i)_{i<\omega}$ in $F$. Let $B:=\acl(bE)$, and let $A_{\eta}:=\acl(a_{0^{\omega\frown}\eta} E)$ for each $\eta\in 2^{\le\omega+\omega}$. Put $A_i:=\acl(a_{0^{\omega+i\frown}1})$ for each $i<\omega$. Then, we have that
\begin{itemize}
    \item $(A_{\eta})_{\eta\in 2^{\le\omega+\omega}}$ is strongly indiscernible over $E$;
    \item $(A_i)_{i<\omega}$ is $B\cup \bigcup\{A_{0^{\omega\frown}\eta:\eta\in 2^{\omega}}\}$-indiscernible;
    \item $(A_i)_{i<\omega}$ is an $E$-finitely satisfiable Morley sequence, enumerated in reverse.
\end{itemize}
By Kim's lemma in the stable theory $SCF$, we have that $B$ is $SCF$-independent from $A_0$ over $E$. By strongly indiscernibility, $(A_i)_{i<\omega}$ is also $A_{\emptyset}$-indiscernible, and so $A_0$ is $SCF$-independent from $A_{\emptyset}$ over $E$.

Choose $B'$ so that $A_{\emptyset}B'\equiv_E A_0B$. Let $q(X;SG(A_0)):=\tp(SG(B)/SG(A_0))$. Then, we have that
\begin{itemize}
    \item[$(\dagger)_1$] for each $\eta\in 2^{\omega}$, $SG(A_{0^{\omega\frown}\eta})\equiv_{SG(B)}SG(A_0)$ ;
    \item[$(\dagger)_2$] for any finitely many incomparable vertices $\eta_1,\ldots,\eta_n\in 2^{\omega+\omega+\omega}$, there are $\eta_1',\ldots\eta_n'\in 2^{\omega}$ such that $$SG(A_{\eta_1})\ldots SG(A_{\eta_n})\equiv SG(A_{0^{\omega\frown}\eta_1'})\ldots SG(A_{{\omega\frown}\eta_n'});$$
     \item[$(\dagger)_3$] $(SG(A_{\eta}))_{\eta\in 2^{\le\omega+\omega}}$ is a strongly indiscernible tree over $SG(E)$.
\end{itemize}
We have that $q(X;SG(A_0))\cup q(X;SG(A_{\emptyset}))$ is consistent. Namely, suppose that $q(X;SG(A_0))\cup q(X;SG(A_{\emptyset}))$ is inconsistent. So, there is a formula $\Phi(X;Y)$ such that $\Phi(X;SG(A_0))\in q(X;SG(A_0))$ and $\{\Phi(X;SG(A_0)),\Phi(X;SG(A_{\emptyset}))\}$ is inconsistent $(\ddagger)$. By $(\dagger)_1$ and $(\dagger)_2$, for any antichain $C\subset2^{\le \omega+\omega}$, $\{\Phi(X;SG(A_{\eta})):\eta\in C\}$ is consistent. Therefore, by $(\dagger)_3$ and $(\ddagger)$, the formula $\Phi(X;Y)$ witnesses ATP via $(SG(A_{\eta}))_{\eta\in 2^{\le\omega+\omega}}$, which contradicts that $\Th(SG(F))$ has NATP. Let $S_0\models q(X;SG(A_0))\cup q(X;SG(A_{\emptyset}))$. By Fact \ref{fact:zoe_type-amalgamation_PACfield}, there is $B_*(=\acl(b_*E))\subset F$ such that $B_*\equiv_{A_0} B$ and $B_*\equiv_{A_{\emptyset}}B'$, and $SG(B_*)=S_0$. By the choice of $B_*$ and definition of $A_0$ and $A_{\emptyset}$, we have that $\models \varphi(b_*;a_{0^{\omega\frown}1})\wedge\varphi(b_*;a_{0^{\omega}})$, which is a contradiction. Thus, $\Th(F)$ has NATP.
\end{proof}

\begin{remark}\label{rem:PAC_structure_ATP}
It is known that for $n\ge 1$, a PAC field $F$ has NSOP$_n$ if the complete system $SG(F)$ has NSOP$_n$ ({\it cf.} \cite[Theorem 3.9]{C2} and \cite[Corollary 7.2.7, Proposition 7.2.8]{Ram1}). It is generalized to PAC structures in \cite{HL}. The criterion for a PAC field in Theorem \ref{thm:PACfield_NATP_via_Galoisgroup} can also be generalized to PAC structures by the same argument.
\end{remark}

We end this subsection with a question on a relationship between pseudo real closed (PRC) fields, pseudo $p$-adic (PpC) fields, and NATP. It is well known that a PAC field is simple if and only if it is bounded (cf. \cite{CP98, C97}), and Montenegro in \cite{Mon17} proved that a PRC field or PpC field is NTP$_2$ if it is bounded, and the converse holds for PRC fields.  Also. Kaplan and Ramsey in \cite{KR20} showed that a Frobenius field is NSOP$_1$. Here, a Frobenius field is a PAC field whose Galois group has the embedding property (see \cite[Chapter 24]{FJ}). According to the equation ``NATP=NTP$_2$+NSOP$_1$", we ask the following question.
\begin{question}\label{question:PRC_PpC_NATP}
Let $F$ be a PRC field or a PpC field whose the Galois group $G(F)$ has the embedding property. Does $\Th(F)$ have NATP?
\end{question}

\subsection{Valued field with NATP}\label{Subsection:AKE_NATP}
We recall several facts on valued fields, which will be used to give a criteria for a valued field to be NATP.

Let $\mathcal{K}=(K,\Gamma,k,\nu:K\rightarrow \Gamma,\ac:K\rightarrow k)$ be a henselian valued field of characteristic $(0,0)$ in the {\bf Denef-Pas language} $\CL_{Pas}=\CL_{K}\cup\CL_{\Gamma,\infty}\cup\CL_{k}\cup\{\nu,\ac\}$, where $\CL_{\Gamma,\infty}$ is the language of ordered abelian group expanded by a constant symbol $\infty$ (see \cite[Definition 1.3.1]{Sinclair}). The following properties hold: 
\begin{itemize}
	\item For $a\in K$, $$\nu(a)=\infty\Leftrightarrow \ac(a)=0\Leftrightarrow a=0;$$
	
	\item For $a,b\in K$, suppose $\nu(a)=\nu(b)\neq \infty$. Then, $$\big[\ac(a)+\ac(b)=\ac(a+b)\wedge \ac(a+b)\neq 0 \big]\Leftrightarrow \big[\nu(a)=\nu(b)=\nu(a+b)\big];$$
	
	\item For $a,b\in K$, suppose $\nu(a)<\nu(b)$. Then, $\ac(a)=\ac(a+b)$.

\end{itemize}
Suppose that $\Th(\mathcal K)$ admits \textit{relative quantifier elimination}, that is, every formula $\varphi(x,x^{\Gamma},x^{k})$ is equivalent to one of the form
$$\bigvee_{i\le n}\chi_i\left(\nu(f_1(x)),\ldots,\nu(f_m(x),x^{\Gamma})\right)\wedge \rho_i\left(\ac(f_1(x)),\ldots,\ac(f_m(x),x^{k} \right)$$
where $x$, $x^{\Gamma}$, $x^k$ are tuples of variables corresponding to $K$, $\Gamma$, $k$ respectively, $\chi_i\in \CL_{\Gamma}$, $\rho_i\in \CL_{k}$, and $f_j$ are polynomials in $\BZ[x]$. For example, any henselian valued fields of characteristic $(0,0)$ and algebraically maximal Kaplansky fields of characteristic $(p,p)$ give the theory having relative quantifier elimination (see \cite[page 9]{Sinclair}). We aim to prove that $\Th(\mathcal K)$ is NATP if and only if $\Th(k)$ is NATP. We follow the strategy of the proof that $\Th(\mathcal K)$ is NTP$_2$ if and only if $\Th(k)$ is NTP$_2$ in \cite[Subsection 7.2]{Che}.

\begin{remark}\label{rem:path_consistent_NATP}
Let $T$ be a complete theory having NATP. Let $\varphi(x;y)$ be a formula and let $(a_{\eta})_{\eta\in \omega^{\le \omega}}$ be strongly indiscernible over $\emptyset$. Suppose $\{\varphi(x;a_{\eta}):\eta\in \omega^{\omega}\}$ is consistent. Then, for each $\eta\in \omega^{\omega}$, $\{\varphi(x;\eta\restriction_i):i\le \omega\}$ is consistent.
\end{remark}
\begin{proof}
Suppose there is $\eta\in \omega^{\omega}$ such that $\{\varphi(x;\eta\restriction_i):i\le \omega\}$ is inconsistent. Then, by strong indiscernibility, $(a_{\eta})_{\eta\in \omega^{<\omega}}$ witnesses $k$-ATP of $\varphi(x;y)$ for some $k\ge 2$, which contradicts the assumption that $T$ has no ATP.
\end{proof}

\begin{fact}\label{fact:basic_facts_on_PAS language}
\begin{enumerate}
	\item Let $\varphi(x)$ be a formula with parameters for $x$ corresponding to $K$ and $|x|=1$. Then, $\varphi(x)$ is equivalent to a finite disjunction of formulas of the form $$\chi_i(\nu(x-d_{i,1}),\ldots,\nu(x-d_{i,m}),d_i^{\Gamma})\wedge \rho_i(\nu(x-d_{i,1}),\ldots,\nu(x-d_{i,m}),d_i^{k}),$$ where $\chi_i\in\CL_{\Gamma,\infty}$ and $\rho_i\in \CL_{k}$ ({\it cf.} \cite[Proposition 2.1.3]{Sinclair}).
	\item $\Gamma$ and $k$ are stably embedded ({\it cf.} \cite[Lemma 2.3]{CH}).
\end{enumerate}
\end{fact}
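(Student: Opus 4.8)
The plan is to deduce both items directly from the hypothesis that $\Th(\mathcal K)$ admits relative quantifier elimination; the only genuinely technical point is a polynomial-to-linear reduction inside item (1), for which I would invoke \cite[Proposition 2.1.3]{Sinclair} rather than reproduce the bookkeeping.

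For item (2), let $\psi(x^{\Gamma};\bar a)$ be a formula whose free variables $x^{\Gamma}$ all range over $\Gamma$, with a parameter tuple $\bar a=\bar a^{K}\,\bar a^{\Gamma}\,\bar a^{k}$ from $\mathcal K$; write it as a parameter-free formula $\psi(x^{\Gamma},y^{K},y^{\Gamma},y^{k})$ with $\bar a$ substituted for $(y^{K},y^{\Gamma},y^{k})$, and apply relative quantifier elimination:
\[
\psi \;\equiv\; \bigvee_{i\le n}\chi_i\!\left(\nu(f_1(y^{K})),\ldots,\nu(f_m(y^{K})),x^{\Gamma},y^{\Gamma}\right)\wedge \rho_i\!\left(\ac(f_1(y^{K})),\ldots,\ac(f_m(y^{K})),y^{k}\right),
\]
with $\chi_i\in\CL_{\Gamma}$, $\rho_i\in\CL_{k}$, and $f_j\in\BZ[y^{K}]$; since the only field variables present are the parameter variables $y^{K}$, the polynomial arguments involve only $y^{K}$. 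Substituting $\bar a$, each $\nu(f_j(\bar a^{K}))$ lies in $\Gamma$ and each $\ac(f_j(\bar a^{K}))$ lies in $k$, so every conjunct $\rho_i(\cdots)$ becomes a sentence of $\CL_{k}$ with parameters from $k$ --- a truth value $\varepsilon_i\in\{0,1\}$ --- and therefore $\psi(x^{\Gamma};\bar a)$ is equivalent to $\bigvee_{\varepsilon_i=1}\chi_i(\cdots,x^{\Gamma},\bar a^{\Gamma})$, an $\CL_{\Gamma}$-formula with parameters from $\Gamma$. Hence every subset of a power of $\Gamma$ definable in $\mathcal K$ is $\CL_{\Gamma}$-definable over $\Gamma$, so $\Gamma$ is stably embedded; the argument for $k$ is symmetric, interchanging the roles of the $\chi_i$ and the $\rho_i$.

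For item (1), let $\varphi(x)$ be a formula with $|x|=1$ and $x$ of field sort, over parameters $\bar a$. Now $x$ is a genuine field variable, and relative quantifier elimination presents $\varphi(x)$ as a finite disjunction of formulas
\[
\chi_i\!\left(\nu(f_{i,1}(x)),\ldots,\nu(f_{i,m_i}(x)),\bar a^{\Gamma}\right)\wedge \rho_i\!\left(\ac(f_{i,1}(x)),\ldots,\ac(f_{i,m_i}(x)),\bar a^{k}\right)
\]
with $f_{i,j}\in K[x]$ once the field parameters $\bar a^{K}$ are absorbed. It remains to replace each $f_{i,j}$ by linear polynomials $x-d$ with $d\in K$. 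Factoring $f_{i,j}=c\prod_l(x-d_l)^{e_l}$ over $K^{\mathrm{alg}}$ and using that the valuation extends uniquely and the angular component extends multiplicatively, one gets, for field points $x$, $\nu(f_{i,j}(x))=\nu(c)+\sum_l e_l\,\nu(x-d_l)$ and $\ac(f_{i,j}(x))=\ac(c)\prod_l\ac(x-d_l)^{e_l}$. Roots $d_l\in K$ are already in the desired shape; the contributions of roots $d_l\notin K$ --- for which $x\mapsto\nu(x-d_l)$ on $K$ is governed by $\sup_{b\in K}\nu(b-d_l)$ and the pseudo-convergence behaviour of $K$ at $d_l$ --- can be absorbed into adjusted $\CL_{\Gamma}$- and $\CL_{k}$-conditions phrased through finitely many $\nu(x-d)$ and $\ac(x-d)$ with $d\in K$, by the standard Newton-polygon analysis of Henselian valued fields; this is precisely where equicharacteristic $0$ (or the Kaplansky hypothesis in mixed characteristic) is used. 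Rewriting the $\chi_i$ and $\rho_i$ accordingly yields the stated normal form.

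The main obstacle is exactly this last reduction: relative quantifier elimination by itself controls $\varphi(x)$ only up to arbitrary polynomials $f_{i,j}(x)$, and descending to linear polynomials with coefficients in $K$ is the one place that genuinely uses the Henselian structure theory (Hensel's lemma, Newton polygons, pseudo-Cauchy sequences); it is carried out in \cite[Proposition 2.1.3]{Sinclair}. Everything else --- item (2) in full, and the coarser ``Boolean combination of finitely many $\nu(f_j(x))$ and $\ac(f_j(x))$'' version of item (1) --- is a direct unwinding of the relative quantifier elimination hypothesis stated above.
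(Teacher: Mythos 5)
The paper provides no proof of this Fact: it simply cites Sinclair's Proposition 2.1.3 for item (1) and Chernikov--Hils, Lemma 2.3 for item (2). Your treatment is therefore not competing against an in-paper argument, but against those references. For item (1) you and the paper end up in the same place: the passage from the general relative quantifier-elimination form, which involves arbitrary polynomials $f_{i,j}\in K[x]$, down to the one-variable linear form with parameters $\nu(x-d)$, $\ac(x-d)$, $d\in K$, really is the only nontrivial content, and you correctly explain both why the na\"ive factorization $\nu(f(x))=\nu(c)+\sum_l e_l\nu(x-d_l)$ only handles roots lying in $K$ and why absorbing the $K$-conjugate clusters requires the Henselian/Newton-polygon analysis; deferring that bookkeeping to Sinclair is exactly what the paper does. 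For item (2) you give a self-contained derivation of stable embeddedness of $\Gamma$ (and by symmetry $k$) directly from the relative quantifier-elimination hypothesis rather than citing Chernikov--Hils: you observe that when the only free variables live in $\Gamma$, the field-sort slots in the $f_j$ are filled exclusively by parameter variables, so after substitution the $\rho_i$ collapse to $\CL_k$-sentences (truth values) and the $\chi_i$ become $\CL_\Gamma$-formulas with parameters $\nu(f_j(\bar a^K)),\bar a^\Gamma\in\Gamma$. That orthogonality between $\chi_i$ (seeing only $x^\Gamma$) and $\rho_i$ (seeing only $x^k$) is indeed what the Denef--Pas normal form gives you, and the argument is correct; it is a pleasant, slightly more elementary route than invoking the cited lemma, though it buys nothing beyond self-containment. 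One small caution worth keeping in mind: this direct argument silently uses that the field-sort free-variable tuple of a $\Gamma$-sorted formula is empty, which is exactly the point where naively applying the same shape to a formula mixing $\Gamma$- and $K$-variables would fail; you phrased this correctly, but it is the one place a reader could slip.
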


\begin{remark}\label{rem:induced_structure_ATP}
Let an $\emptyset$-definable set $D$ be stably embedded and assume that $D_{ind}$ is NATP. Let $\varphi(x;y)$ be a formula such that $\models \forall y(\varphi(x;y)\rightarrow D(x))$. Then, $\varphi(x;y)$ has NATP.
\end{remark}
\begin{proof}
Suppose there is $(a_{\eta})_{\eta\in 2^{<\kappa}}$ witnessing ATP of $\varphi(x;y)$. Note that each $a_{\eta}$ can be outside $D$. We may assume that $(a_{\eta})_{\eta\in 2^{<\kappa}}$ is strongly indiscernible and $cf(\kappa)>|\CL|$.

Since $D$ is stably embedded, for each $\eta\in \kappa$, there is $\psi_{\eta}(x;y)\in \CL$ and $b_{\eta}\in D$ such that $$\varphi(x;a_{\eta})\wedge D(x)=\psi_{\eta}(x;b_{\eta})\wedge D(x).$$ By Remark \ref{rem:tree_monotone_decreasing_coloring} and $cf(\kappa)>|\CL|$, there are a formula $\psi(x;y)$ and an embedding $\iota:2^{<\omega}\rightarrow 2^{<\kappa}$ such that for each $\eta\in 2^{<\omega}$, $$\psi_{\iota(\eta)}(x;z)=\psi(x;z).$$ So, we may assume that there are a strong indiscernible $(a_{\eta})_{\eta\in 2^{<\omega}}$ witnessing ATP of $\varphi(x;y)$ and a formula $\psi(x;z)$ such that for each $\eta\in 2^{<\omega}$, there is $b_{\eta}\in D$ such that $$\varphi(x;a_{\eta})\wedge D(x)=\psi(x;b_{\eta})\wedge D(x).$$ Then, the formula $\psi(x;z)\wedge D(x)$ has ATP witnessed by $(b_{\eta})_{\eta\in 2^{<\omega}}$ and so $D_{ind}$ has ATP, a contradiction.
\end{proof}

\begin{fact}\label{fact:basic_on_valuation_indiscernibles}\cite[Lemma 7.9, Lemma 7.10]{Che}
Let $(c_i)_{i\in I}$ be indiscernible.
\begin{enumerate}
	\item Consider the function $(i,j)\mapsto \nu(c_j-c_i)$ for $i<j$. Then, it satisfies one of the following:
	\begin{itemize}
		\item It is strictly increasing depending on $i$ so the sequence is pseudo-convergent.
		\item It is strictly decreasing depending on $j$ so the sequence taken in the reverse direction is pseudo-convergent. In this case, we call such a sequence \normalfont{decreasing}.
		\item It is constant. In this case, we call such a sequence \textit{constant}.

	\end{itemize}
	
	\item Suppose that $(c_i)_{i\in I}$ is a pseudo-convergent sequence. Let $\bar I$ be the Dedekind closure of $I$. Then, for any $a$, there is some $h\in \bar I\cup \{-\infty,\infty\}$ such that after taking $c_{-\infty}$ and $c_{\infty}$ such that $c_{-\infty}^{\frown}(c_i)_{i\in I}^{\frown}c_{\infty}$ is indiscernible,
	\begin{itemize}
		\item[For $i<h$:] $$\nu(c_{\infty}-c_i)<\nu(a-c_{\infty}),\ \nu(a-c_i)=\nu(c_{\infty}-c_i),\ \ac(a-c_i)=\ac(c_{\infty}-c_i),$$
		\item[For $i>h$:] $$\nu(c_{\infty}-c_i)>\nu(a-c_{\infty}),\ \nu(a-c_i)=\nu(a-c_{\infty}),\ \ac(a-c_i)=\ac(a-c_{\infty}).$$
	\end{itemize}
\end{enumerate}
\end{fact}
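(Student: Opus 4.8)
The plan is to prove both parts by using indiscernibility to reduce to a finite case analysis governed by the ultrametric inequality $\nu(x+y)\ge\min(\nu(x),\nu(y))$ — with equality when $\nu(x)\neq\nu(y)$ — together with its $\ac$-companion $\nu(x)<\nu(y)\Rightarrow \ac(x+y)=\ac(x)$, both already recorded for the Denef--Pas language in the discussion preceding Fact \ref{fact:basic_facts_on_PAS language}. We follow the proof scheme of Chernikov.

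\medskip

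\textbf{Part (1).} First I would observe that, since $(c_i)_{i\in I}$ is an indiscernible sequence, the quantifier-free type of an increasing triple $(c_i,c_j,c_k)$ with $i<j<k$ does not depend on the choice of $i<j<k$; in particular the truth value of each of ``$\nu(c_j-c_i)<\nu(c_k-c_j)$'', ``$\nu(c_j-c_i)=\nu(c_k-c_j)$'', ``$\nu(c_j-c_i)>\nu(c_k-c_j)$'' is fixed once and for all, and these three possibilities will give the three cases of the statement. In the ``$<$'' case the ultrametric inequality forces $\nu(c_k-c_i)=\nu(c_j-c_i)$ for all $i<j<k$; applying indiscernibility to pairs (the quantifier-free type of $(c_i,c_j)$ depends only on its being an increasing pair) then shows $\nu(c_j-c_i)$ depends only on $i$, say $\nu(c_j-c_i)=g(i)$, and the case hypothesis becomes $g(i)<g(j)$ for $i<j$, so $g$ is strictly increasing and $(c_i)_{i\in I}$ is pseudo-convergent. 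The ``$>$'' case is symmetric, with the roles of $i$ and $j$ (equivalently, of the sequence and its reverse) interchanged, giving a decreasing sequence. In the ``$=$'' case the ultrametric inequality only gives $\nu(c_k-c_i)\ge\nu(c_j-c_i)$, so I must still exclude the possibility of a strict inequality: if $\nu(c_k-c_i)>\nu(c_j-c_i)$ held uniformly, then for $i<j<k<l$ one computes $\nu(c_l-c_i)$ through $c_k$ (getting the smaller value $\nu(c_j-c_i)$, since $\nu(c_l-c_k)=\nu(c_j-c_i)$ by the ``$=$'' hypothesis applied to adjacent gaps) and compares with the value forced on the increasing triple $(c_i,c_j,c_l)$ (which would require $\nu(c_l-c_i)>\nu(c_j-c_i)$), a contradiction; hence all pairs have one common valuation and $\nu$ is constant. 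This last step — ruling out a spurious fourth pattern — is the only delicate point in Part (1); everything else is bookkeeping with indiscernibility.

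\medskip

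\textbf{Part (2).} Given a pseudo-convergent $(c_i)_{i\in I}$, I would first extend it by compactness to an indiscernible sequence $c_{-\infty}\coc(c_i)_{i\in I}\coc c_{\infty}$; by Part (1), $c_{\infty}$ is a pseudo-limit, i.e.\ $\nu(c_{\infty}-c_i)=\nu(c_j-c_i)=g(i)$ for all $i<j$ in $I$, and $g$ is strictly increasing. Now fix $a$ and set $\delta:=\nu(a-c_{\infty})\in\Gamma\cup\{\infty\}$. Writing $a-c_i=(a-c_{\infty})+(c_{\infty}-c_i)$ and comparing valuations: when $g(i)<\delta$ the ultrametric equalities give $\nu(a-c_i)=g(i)=\nu(c_{\infty}-c_i)$ and $\ac(a-c_i)=\ac(c_{\infty}-c_i)$, and when $g(i)>\delta$ they give $\nu(a-c_i)=\delta=\nu(a-c_{\infty})$ and $\ac(a-c_i)=\ac(a-c_{\infty})$ — these are exactly the two bulleted clauses. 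Since $g$ is strictly increasing, $\{i\in I:g(i)<\delta\}$ is an initial segment and there is at most one index $i_0$ with $g(i_0)=\delta$, so I would take $h$ to be the associated Dedekind cut in $\bar I$: $h:=i_0$ if this exceptional index exists (leaving it out of both clauses, as the statement permits), and $h:=-\infty$ or $h:=\infty$ in the degenerate cases where one of the two pieces is empty. Then ``$i<h$'' captures exactly $g(i)<\delta$ and ``$i>h$'' captures exactly $g(i)>\delta$, which finishes the proof. The only subtlety here is placing $h$ so that the single possible boundary index is handled correctly.
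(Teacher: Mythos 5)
The paper offers no proof of this statement; it is imported verbatim as a Fact from Chernikov's NTP$_2$ paper (Lemmas 7.9 and 7.10), so the only comparison available is with that source, and your argument is correct and is essentially the standard one given there. For (1) you correctly use indiscernibility to make the comparison of $\nu(c_j-c_i)$ and $\nu(c_k-c_j)$ uniform and then settle each branch by the ultrametric identities (including the needed four-index argument ruling out $\nu(c_k-c_i)>\nu(c_j-c_i)$ in the equality case), and for (2) you correctly note that the appended $c_{\infty}$ is a pseudo-limit, so that writing $a-c_i=(a-c_{\infty})+(c_{\infty}-c_i)$ and comparing $g(i)=\nu(c_{\infty}-c_i)$ with $\delta=\nu(a-c_{\infty})$ gives exactly the two clauses, with $h$ the cut determined by $\{i:g(i)<\delta\}$ (or the unique index with $g(i_0)=\delta$, or $\pm\infty$ in the degenerate cases).
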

 
From now on, we assume that $\Th(k)$ has no ATP (automatically, $\Th(\Gamma)$ has no ATP because it has NIP).
\begin{lemma}\label{lem:NATP_complexity_one}
Let $$\varphi(x;yy^{\Gamma}y^k)\equiv\chi(\nu(x-y);y^{\Gamma})\wedge \rho(\ac(x-y);y^k)$$ where $\chi\in \CL_{\Gamma,\infty}$ and $\rho\in\CL_k$. Then, $\varphi(x;yy^{\Gamma}y^k)$ does not witness ATP.
\end{lemma}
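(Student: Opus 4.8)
The plan is to assume that $\varphi$ witnesses ATP and derive a contradiction with the standing hypothesis that $\Th(k)$ is NATP (and hence so is $\Th(\Gamma)$, being NIP). By the strong modeling property (Fact~\ref{modeling property}) one may fix a strongly indiscernible tree $(a_\eta)_{\eta\in 2^{<\kappa}}$ witnessing ATP of $\varphi$ for $\kappa$ large enough, and write $a_\eta=d_\eta{}^\frown\gamma_\eta{}^\frown\xi_\eta$ with $d_\eta\in K$, $\gamma_\eta\in\Gamma$, $\xi_\eta\in k$. The leftmost branch gives an indiscernible sequence $(a_{0^i})_{i<\kappa}$, so by Fact~\ref{fact:basic_on_valuation_indiscernibles}(1) the sequence of centers $(d_{0^i})_{i<\kappa}$ is constant, decreasing, or pseudo-convergent.

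\emph{The constant case.} Any two comparable pairs of nodes of $2^{<\kappa}$ realize the same quantifier-free $\mathcal{L}_0$-type, so if $d_{0^0}=d_{0^1}$ then strong indiscernibility forces $d_\eta=d_\nu$ for all comparable $\eta,\nu$, and hence $d_\eta$ is a constant $d$ on the whole tree. Then $\varphi(x;a_\eta)$ depends on $x$ only through the pair $(\nu(x-d),\ac(x-d))$. A straightforward analysis (treating the locus $x=d$ separately, harmless by Lemma~\ref{lem:disjunction_NATP}) then shows that for any antichain $X$ the set $\{\varphi(x;a_\eta):\eta\in X\}$ is consistent if and only if both $\{\chi(v;\gamma_\eta):\eta\in X\}$ is consistent in $\Gamma$ and $\{\rho(w;\xi_\eta):\eta\in X\}$ is consistent in $k$ (using that $\Gamma$ and $k$ are stably embedded, Fact~\ref{fact:basic_facts_on_PAS language}(2)), and that for comparable $\eta\tri\nu$ the set $\{\varphi(x;a_\eta),\varphi(x;a_\nu)\}$ is inconsistent if and only if $\{\chi(v;\gamma_\eta),\chi(v;\gamma_\nu)\}$ or $\{\rho(w;\xi_\eta),\rho(w;\xi_\nu)\}$ is inconsistent. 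Since the type of a comparable pair does not depend on which pair is chosen, one of these two inconsistency alternatives holds for all comparable pairs simultaneously; thus either $(\gamma_\eta)_{\eta\in 2^{<\kappa}}$ together with $\chi(v;y^\Gamma)$ witnesses ATP of $\Th(\Gamma)$, contradicting that $\Th(\Gamma)$ is NIP, or $(\xi_\eta)_{\eta\in 2^{<\kappa}}$ together with $\rho(w;y^k)$ witnesses ATP of $\Th(k)$, contradicting the hypothesis.

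\emph{The moving case.} Suppose $(d_{0^i})_{i<\kappa}$ is pseudo-convergent; the decreasing case is symmetric after reversing the branch. This is the heart of the argument, because the centers genuinely move and one cannot extract a single instance over $(\Gamma,k)$ directly from a realization of an antichain. The plan is to re-center. For any branch, the length-$\kappa$ node $d_\rho$ is a pseudo-limit of $(d_{\rho\restriction i})_{i<\kappa}$ — this follows from Fact~\ref{fact:basic_on_valuation_indiscernibles}(1) applied to the indiscernible sequence $(d_{\rho\restriction i})_{i\le\kappa}$ — so, taking $d_\rho$ as the endpoint $c_\infty$ in Fact~\ref{fact:basic_on_valuation_indiscernibles}(2), for every parameter $b$ there is $h<\kappa$ with $\nu(b-d_{\rho\restriction i})=\nu(b-d_\rho)$ and $\ac(b-d_{\rho\restriction i})=\ac(b-d_\rho)$ whenever $h<i<\kappa$; hence $\models\varphi(b;a_{\rho\restriction i})\leftrightarrow\varphi(b;d_\rho{}^\frown\gamma_{\rho\restriction i}{}^\frown\xi_{\rho\restriction i})$ for such $i$. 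Feeding this back, together with strong indiscernibility and the tree-Ramsey tools of Remark~\ref{rem:tree_monotone_decreasing_coloring} and Corollary~\ref{rem:tree_2kappa_coloring}, one passes to a strongly indiscernible witness of ATP whose $K$-coordinate is constant, to which the constant case applies, giving the contradiction.

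The step I expect to be the main obstacle is exactly this re-centering: the stabilization level $h$ above depends on both $b$ and the branch, and one must make it uniform over the whole tree so that the re-centered configuration is again a genuine strongly indiscernible ATP-witness; this is where one has to be careful with Fact~\ref{fact:basic_on_valuation_indiscernibles}(2), with the Denef--Pas identities relating $\nu$ and $\ac$, and with the homogeneity available from the coloring lemmas.
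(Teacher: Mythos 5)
There is a genuine gap, in two places. First, your case division is not exhaustive because you misread the trichotomy of Fact \ref{fact:basic_on_valuation_indiscernibles}(1): there ``constant'' means that the map $(i,j)\mapsto\nu(c_j-c_i)$ is constant, not that the centers coincide. Your ``constant case'' only treats the degenerate situation $d_{0^0}=d_{0^1}$ (all centers literally equal, where $\varphi(x;a_\eta)$ indeed factors through the single pair $(\nu(x-d),\ac(x-d))$ and the projection argument to $\Gamma$ and $k$ can be made to work). The genuinely constant case --- distinct centers with $\nu(d_\eta-d_\nu)$ of fixed finite value --- is covered by neither of your cases, and it is exactly where the paper has to work hardest (its Case 3, Subcase 3.2): one shows that at most one level can satisfy $\nu(a-c_{\eta_i})>\nu(c_{\eta_\infty}-c_{\eta_i})$, then re-centers at $c_{\eta_{i_0}}$ using the Denef--Pas identity $\ac(a-c_\eta)=\ac(a-c_{\eta_{i_0}})+\ac(c_{\eta_{i_0}}-c_\eta)\neq 0$, builds a shifted residue formula $\rho'$, and uses Remark \ref{rem:path_consistent_NATP} in $k$ to realize a whole path, contradicting the pairwise inconsistency along comparable nodes. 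None of this is reachable from your two cases.

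Second, in the moving (pseudo-convergent/decreasing) case your argument is only a plan, and the step you yourself flag as the obstacle --- making the stabilization level $h$ (which depends on both the would-be realization $b$ and the branch) uniform so that the re-centered family is again a strongly indiscernible ATP-witness --- is not resolved and is genuinely problematic: the equivalence $\varphi(b;a_{\rho\restriction i})\leftrightarrow\varphi(b;d_\rho{}^\frown\gamma_{\rho\restriction i}{}^\frown\xi_{\rho\restriction i})$ holds only for $i$ beyond $h(b)$, so the \emph{inconsistency} of comparable pairs (which must hold against all potential realizations, each with its own $h$) need not survive re-centering, and a single center $d_\rho$ bears no controlled relation to nodes off the branch of $\rho$. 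The paper does not attempt this reduction to a constant-center witness at all; instead it fixes one pseudo-limit node $c_{\eta_\infty}$, splits on the position of $h$ (Subcase 1.1: $c_{\eta_\infty}$ itself realizes a path, immediate contradiction), and in Subcase 1.2 rewrites the instances along a fixed subtree as conditions on $\nu(x-c_{\eta_\infty})$ and $\ac(x-c_{\eta_\infty})$, uses NATP of $\Gamma$ and $k$ through Remark \ref{rem:path_consistent_NATP} to realize path-sets in the two sorts, and lifts to an element of $K$ with prescribed valuation and angular component relative to $c_{\eta_\infty}$, contradicting path inconsistency. To repair your proof you would either have to carry out this kind of fixed-center, path-consistency-transfer argument in both the pseudo-convergent and the constant-valuation cases, or supply the missing uniformization, which I do not believe can be done as stated.
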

\begin{proof}
Suppose $\varphi(x;yy^{\Gamma}y^k)$ witnesses ATP with strongly indiscernible $(d_{\eta})_{\eta\in \omega^{\le \omega+\omega}}$, where $d_{\eta}=c_{\eta}d_{\eta}^{\Gamma}d_{\eta}^k$, and $c_{\eta}\in K$ corresponds to $y$, $d_{\eta}^{\Gamma}\in \Gamma$ corresponds to $y^{\Gamma}$, and $d_{\eta}^k\in k$ corresponds to $y^k$.

We choose universal antichains $X_i$ in $\omega^{\le \omega+\omega}$  for each $i\in \omega\cup\{-\infty,\infty\}$ as follows:
\begin{itemize}
	\item $X_{-\infty}:=\{0^{\frown}\eta:\eta\in \omega^\omega\}$, and $X_{\infty}:=\{1^{\omega\frown}\eta:\eta\in \omega^{\omega}\}$,
	\item for each $i\in \omega$, $X_i:=\{1^{1+i\frown }0^{\frown}\eta:\eta\in \omega^{\omega}\}$,
	\item $(X:=)\bigcup_{i\in\omega\cup\{-\infty,\infty\}} X_i$ is an antichain.
\end{itemize}
We choose a subtree $\mathcal T_{i}$ in $\omega^{\le \omega+\omega}$  for each $i\in \omega\cup\{-\infty,\infty\}$ as follows:
\begin{itemize}
	\item $\mathcal T_{-\infty}:=\{0^{\frown}\eta:\eta\in \omega^{\le \omega}\}$, and $\mathcal T_{\infty}:=\{1^{\omega\frown}\eta:\eta\in \omega^{\le \omega}\}$,
	\item for each $i\in \omega$, $\mathcal T_i:=\{1^{1+i\frown }0^{\frown}\eta:\eta\in \omega^{\le\omega}\}$.
\end{itemize}
Then for each choice of $\eta_i,\eta_i'\in\mathcal T_i$ for all $i\in \omega\cup\{-\infty,\infty\}$, the sequence $(d_{\eta_i})_{i\in\omega\cup\{-\infty,\infty\}}$ is indiscernible and $(d_{\eta_i})_{i\in\omega\cup\{-\infty,\infty\}}\equiv (d_{\eta_i'})_{i\in\omega\cup\{-\infty,\infty\}}$. Let $a\models \{\varphi(x;d_{\eta}):\eta\in X\}$.

\medskip

\noindent{\it Case 1.} There is $\eta_i\in X_i$ for each $i\in \omega$ such that $(c_{\eta_i})_{i\in\omega}$ is pseudo-convergent and so for each $i\in \omega$ for any $\eta_i\in X_i$, $(c_{\eta_i})_{i\in\omega}$ is pseudo-convergent. Fix $\eta_{-\infty}\in X_{-\infty}$ and $\eta_{\infty}\in X_{\infty}$ arbitrarily. Let $h\in \{-\infty\}\cup \omega\cup\{\infty\}$ be as given by Fact \ref{fact:basic_on_valuation_indiscernibles}(2).

\smallskip

\noindent{\it Subcase 1.1.} Assume $0<h$. Then, $$\nu(a-c_{\eta_0})=\nu(c_{\eta_\infty}-c_{\eta_0}),\ \ac(a-c_{\eta_0})=\ac(c_{\eta_{\infty}}-c_{\eta_0}).$$ Then, $c_{\eta_{\infty}}\models \varphi(x;d_{\eta_0})$. Since $\mathcal T_0$ is strongly indiscernible over $c_{\eta_{\infty}}$, $$c_{\eta_{\infty}}\models \{\varphi(x;d_{\zeta}):\zeta\unlhd \eta_0,\zeta\in \mathcal T_0\},$$ which is impossible.

\smallskip

\noindent{\it Subcase 1.2.} Therefore we may assume that $h\in \{-\infty,0\}$. 
Then, we have that $$\nu(a-c_{\eta_i})=\nu(a-c_{\eta_\infty}),\ \ac(a-c_{\eta_i})=\ac(a-c_{\eta_{\infty}}),\ \nu(a-c_{\eta_{\infty}})<\nu(c_{\eta_{\infty}}-c_{\eta_i})$$ 
for each $0<i\in \omega$.

Let $$\chi'(x^{\Gamma};e_{\eta}^{\Gamma})\equiv \chi(x^{\Gamma};d_{\eta}^{\Gamma})\wedge x^{\Gamma}<\nu(c_{\eta_\infty}-c_{\eta})$$ with $e_{\eta}^{\Gamma}=d_{\eta}^{\Gamma}\coc \nu(c_{\eta_{\infty}}-c_{\eta})$ for each $\eta\in \omega^{\le \omega}$. 
Then, we have that $$\nu(a-c_{\eta_{\infty}})\models \{\chi'(x^{\Gamma};e_{\eta}^{\Gamma}):\eta\in X_1\},\ \ac(a-c_{\eta_{\infty}})\models \{\rho(x^k;d_{\eta}^k):\eta\in X_1\}.$$ Since $\Th(\Gamma)$ and $\Th(k)$ has no ATP, by Remark \ref{rem:path_consistent_NATP}, for any $\eta\in X_1$, there are $a_{\eta}^{\Gamma}\in \Gamma$ and $a_{\eta}^k\in k$ such that $$a_{\eta}^{\Gamma}\models \{\chi'(x^{\Gamma};e_{\zeta}^{\Gamma}):\zeta\unlhd \eta, \zeta\in \mathcal T_1\},\ a_{\eta}^k\models \{\rho(x^k;d_{\zeta}^k):\zeta\unlhd \eta,\zeta\in \mathcal T_1\}.$$ For each $\eta\in X_1$, take $a_{\eta}\in K$ such that $$\nu(a_{\eta}-c_{\eta_{\infty}})=a_{\eta}^{\Gamma},\ \ac(a_{\eta}-c_{\eta_{\infty}})=a_{\eta}^k.$$ Then, for each $\eta\in X_1$, we have that $a_{\eta}\models\{\varphi(x;d_{\zeta}):\zeta\unlhd \eta,\zeta\in \mathcal T_1\}$, a contradiction.

\medskip

\noindent{\it Case 2.} There is $\eta_i\in X_i$ for each $i\in \omega$ such that $(c_{\eta_i})_{i\in\omega}$ is decreasing, which is reduced to the first case by reversing the order $\{-\infty\}\cup\omega\cup\{\infty\}$.

\medskip

\noindent{\it Case 3.} There is $\eta_i\in X_i$ for each $i\in \omega$ such that $(c_{\eta_i})_{i\in\omega}$ is constant.

\smallskip

\noindent{\it Subcase 3.1.} There is $i\in \omega$ such that $\nu(a-c_{\eta_i})<\nu(c_{\eta_{\infty}}-c_{\eta_i})$. By the assumption, we have that $$\nu(c_{\eta_{\infty}}-c_{\eta_i})=\nu(c_{\eta_{\infty}}-c_{\eta})=\nu(c_{\eta}-c_{\eta_i})$$ for any $\eta\in X_{i+1}$. So, we have that $$\nu(a-c_{\eta_i})=\nu(a-c_{\eta})=\nu(a-c_{\eta_{\infty}}),\ \ac(a-c_{\eta})=\ac(a-c_{\eta_{\infty}})$$ for any $\eta\in X_{i+1}$. Then, by the argument in the subcase 1.2., for each $\eta\in X_{i+1}$, there is $a_{\eta}\models \{\varphi(x;d_{\zeta}):\zeta\unlhd \eta,\zeta\in \mathcal T_{i+1}\}$.

\smallskip

\noindent{\it Subcase 3.2.} For every $i\in \omega$ and for every $\eta \in X_i$, $$\nu(a-c_{\eta})\ge \nu(c_{\eta_{\infty}}-c_{\eta}).$$ Note that there can be at most one $i\in \omega$ such that $\nu(a-c_{\eta_i})>\nu(c_{\eta_{\infty}}-c_{\eta_i})$. Namely, if $\nu(a-c_{\eta_j})>\nu(c_{\eta_{\infty}}-c_{\eta_j})$ for some $j\neq i\in \omega$, then we have that 
\begin{align*}
\nu(c_{\eta_{\infty}}-c_{\eta_j})&=\nu(c_{\eta_j}-c_{\eta_i})\\
&\ge \min\{\nu(a-c_{\eta_i}),\nu(a-c_{\eta_j})\}\\
&>\nu(c_{\eta_{\infty}}-c_{\eta_i})=\nu(c_{\eta_{\infty}}-c_{\eta_j}),
\end{align*}
a contradiction. So, there is $i_0\in \omega$ such that for all $i> i_0$, for all $\eta\in X_i$, $$\nu(a-c_{\eta})=\nu(c_{\eta_{\infty}}-c_{\eta})=\nu(c_{\eta}-c_{\eta_{i_0}})=\nu(a-c_{\eta_{i_0}})(\neq \infty).$$ Then, for $\eta\in X_{i_0+1}$, $\ac(a-c_{\eta})\neq \ac(c_{\eta_{i_0}}-c_{\eta})$ so that $$\ac(a-c_{\eta})=\ac(a-c_{\eta_{i_0}})+\ac(c_{\eta_{i_0}}-c_{\eta})\neq 0.$$ Namely, if $\ac(a-c_{\eta})=\ac(c_{\eta_{i_0}}-c_{\eta})$, then $$\nu(a-c_{\eta_{i_0}})=\nu\left((a-c_{\eta})-(c_{\eta_{i_0}}-c_{\eta})\right)>\nu(a-c_{\eta})=\nu(a-c_{\eta_{i_0}}),$$ a contradiction.

For each $\eta\in \mathcal T_{i_0+1}$, let $$\rho'(x^{k},e_{\eta}^k)\equiv\rho(x^k+\ac(c_{\eta_{i_0}}-c_{\eta});d_{\eta}^k)\wedge x^k+ \ac(c_{\eta_{i_0}}-c_{\eta})\neq 0$$ with $e_{\eta}^k:=d_{\eta}^k\coc\ac(c_{\eta_{i_0}}-c_{\eta})$. Since $\ac(a-c_{\eta_{i_0}})\models \{\rho'(x^k;e_{\eta}^k):\eta\in X_{i_0+1}\}$ and $\mathcal T_{i_0+1}$ is strongly indiscernible over $c_{\eta_{i_0}}$, by Remark \ref{rem:path_consistent_NATP}, for each $\eta\in X_{i_0+1}$, there is $a_{\eta}^k\in k$ such that $$a_{\eta}^k\models \{\rho'(x^k;e_{\zeta}^k):\zeta\unlhd \eta,\zeta\in \mathcal T_{i_0+1}\}.$$ For each $\eta\in X_{i_0+1}$, take $a_{\eta}\in K$ such that $\nu(a_{\eta}-c_{\eta_{i_0}})=\nu(a-c_{\eta_{i_0}})\wedge \ac(a_{\eta}-c_{\eta_{i_0}})=a_{\eta}^k$. Then, for each $\eta\in X_{i_0+1}$ and for each $\zeta\unlhd \eta\in \mathcal T_{i_0+1}$, we have that 
$$\nu(a_{\eta}-c_{\eta_{i_0}})=\nu(a-c_{\eta_{i_0}})=\nu(c_{\eta_{i_0}}-c_{\eta})=\nu(c_{\eta_{i_0}}-c_{\zeta}),$$ and $$\ac(a_{\eta}-c_{\zeta})=\ac(a_{\eta}-c_{\eta_{i_0}})+\ac(c_{\eta_{i_0}}-c_{\zeta})=a_{\eta}^k+\ac(c_{\eta_{i_0}}-c_{\zeta})\neq 0.$$
So, $$\nu(a_{\eta}-c_{\zeta})=\nu(c_{\eta_{i_0}}-c_{\zeta})=\nu(a-c_{\eta_{i_0}}),$$ which implies that $a_{\eta}\models \{\varphi(x;d_{\zeta}):\zeta\unlhd \eta,\zeta\in \mathcal T_{i_0+1}\}$, a contradiction.
\end{proof}

\begin{lemma}\label{lem:NATP_complexity_n}
Let $$\varphi(x;y)=\chi(\nu(x-y_1),\ldots,\nu(x-y_n),y^{\Gamma})\wedge \rho(\ac(x-y_1),\ldots,\ac(x-y_n),y^k)$$ be a formula where $\chi\in \CL_{\Gamma,\infty}$ and $\rho\in \CL_{k}$. Then, any tree $(d_{\eta})_{\eta\in \omega^{<\omega}}$ does not witness ATP of $\varphi(x;y)$.
\end{lemma}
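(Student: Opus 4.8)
The plan is to argue by induction on $n$, the number of polynomials appearing in $\varphi$; the base case $n=1$ is exactly Lemma \ref{lem:NATP_complexity_one}. So assume the statement for $n-1$ and suppose toward a contradiction that some tree $(d_\eta)_{\eta\in\omega^{<\omega}}$ witnesses ATP of $\varphi(x;y)$, where $d_\eta=c_{\eta,1}\coc\cdots\coc c_{\eta,n}\coc d^{\Gamma}_\eta\coc d^k_\eta$ with $c_{\eta,j}\in K$. By the strong modeling property (Fact \ref{modeling property}) I would first replace the tree by a strongly indiscernible one, then enlarge the index tree to $\omega^{\le\omega+\omega}$ and choose the universal antichains $X_i$ ($i\in\omega\cup\{-\infty,\infty\}$), the subtrees $\mathcal T_i$, and a realization $a\models\{\varphi(x;d_\eta):\eta\in X\}$ with $X=\bigcup_i X_i$, exactly as in the proof of Lemma \ref{lem:NATP_complexity_one}. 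The only difference in the setup is that now there are $n$ "coordinate sequences" $(c_{\eta_i,j})_{i\in\omega}$, $j\le n$, to be analyzed at once instead of a single one.

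For each $j\le n$, applying Fact \ref{fact:basic_on_valuation_indiscernibles}(1) to the indiscernible sequence $(c_{\eta_i,j})_{i\in\omega}$ shows it is pseudo-convergent, decreasing, or constant; since there are finitely many $j$ there are only finitely many combinations of behaviours to treat, and the order-reversal device of Case 2 of Lemma \ref{lem:NATP_complexity_one} reduces their number. For each pseudo-convergent $j$ I invoke Fact \ref{fact:basic_on_valuation_indiscernibles}(2) to get a threshold $h_j$ and endpoints $c_{\pm\infty,j}$. If some $h_j$ lies strictly above the bottom of its block — the analogue of Subcase 1.1 — then the corresponding endpoint $c_{\infty,j}$ realizes $\varphi(x;d_{\eta_0})$ and hence, by strong indiscernibility of $\mathcal T_j$, an entire path below $\eta_0$, contradicting chain-inconsistency. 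Otherwise I pass to a final segment $\bigcup_{i>i_0}X_i$ of the antichain with $i_0$ above all thresholds, so that for every $j$ simultaneously the pair $\big(\nu(a-c_{\eta,j}),\ac(a-c_{\eta,j})\big)$ is stabilized: it is either a genuine constant, or it equals $\big(\nu(a-d_j),\ac(a-d_j)\big)$ for a fixed parameter $d_j\in\{c_{\infty,j},c_{\eta_{i_0},j}\}$, possibly subject to an $\eta$-varying constraint of the form $\nu(a-d_j)<\nu(d_j-c_{\eta,j})$ or $\ac(a-d_j)=\ac(a-c_{\eta,j})-\ac(d_j-c_{\eta,j})\neq 0$, where $\nu(d_j-c_{\eta,j})$ and $\ac(d_j-c_{\eta,j})$ are "known" data in $\Gamma$ and in $k$.

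Now there is a dichotomy. If at least one coordinate $j$ has had its $\eta$-varying parameter $c_{\eta,j}$ replaced by a fixed $d_j$, then, after absorbing the known data $\nu(d_j-c_{\eta,j})$, $\ac(d_j-c_{\eta,j})$ into $d^{\Gamma}_\eta$, $d^k_\eta$ and working over $d_j$, the configuration $\{\varphi(x;d_\eta):\eta\}$ over the relevant strongly indiscernible subtree becomes an ATP-configuration for a formula with at most $n-1$ $\eta$-varying polynomials, and the inductive hypothesis gives a contradiction. If instead no coordinate varies, then $\varphi(a;d_\eta)$ over a final-segment antichain is equivalent to a formula $\theta(\bar\xi;\bar e_\eta)$ purely in the value group and residue field, where $\bar\xi\in\Gamma^{<\omega}\times k^{<\omega}$ is determined by $a$ together with the fixed parameters and $\bar e_\eta$ is the known $\eta$-data; since $\Gamma$ and $k$ are stably embedded (Fact \ref{fact:basic_facts_on_PAS language}(2)), $\Th(\Gamma)$ is NIP and $\Th(k)$ is NATP, the formula $\theta$ has no ATP, and Remark \ref{rem:path_consistent_NATP} (applied as in Subcase 1.2 of Lemma \ref{lem:NATP_complexity_one}) lets me realize an entire path of $\theta$-instances, contradicting the chain-inconsistency of $\varphi$. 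The step I expect to be the main obstacle is precisely this simultaneous bookkeeping: arranging a single final segment $i_0$ and a single choice of endpoints that stabilize all $n$ coordinates at once, controlling the cross-terms $\nu(c_{\eta,j}-c_{\eta',j'})$ for $j\neq j'$, and verifying that in each branch of the dichotomy the reduced configuration — whether an $(n-1)$-polynomial ATP-configuration or a pure $\Gamma\oplus k$ one — is still carried by a strongly indiscernible subtree, so that the inductive hypothesis and Remark \ref{rem:path_consistent_NATP} genuinely apply.
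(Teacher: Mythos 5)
Your plan diverges from the paper's argument in a way that leaves a real gap. The paper's inductive step is purely syntactic: after choosing $a\models\{\varphi(x;d_\eta):\eta\in\omega^{\omega}\}$ and arranging that the level-$\omega$ antichain is $\delta$-indiscernible over $a$, one compares the three values $\nu(a-d_\eta^1)$, $\nu(a-d_\eta^n)$, $\nu(d_\eta^n-d_\eta^1)$, which by this indiscernibility fall into one of five cases uniformly in $\eta$; in each case the terms $\nu(x-y_n)$, $\ac(x-y_n)$ (or the $y_1$-versions) occurring in $\varphi$ can be replaced either by the corresponding $y_1$-terms or by parameter-only terms $\nu(y_n-y_1)$, $\ac(y_n-y_1)$, producing a formula $\varphi_i$ that implies $\varphi$ (hence is path-inconsistent), is realized by $a$ on the level-$\omega$ antichain (hence antichain-consistent by strong indiscernibility), and contains only $n-1$ terms of the form $\nu(x-y_j)$, so the induction hypothesis applies. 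No new field elements are ever constructed in the inductive step, and the whole geometric analysis with the antichains $X_i$, subtrees $\mathcal T_i$, and Fact \ref{fact:basic_on_valuation_indiscernibles} is confined to the base case $n=1$. Your proposal instead tries to re-run that geometric analysis for all $n$ coordinate sequences simultaneously, and the reduction you aim for never performs the key rewriting that exploits the internal distances $\nu(d_\eta^n-d_\eta^1)$ within a single parameter tuple.

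Concretely, two steps of your sketch do not go through as stated. First, the dichotomy ``some coordinate's parameter is replaced by a fixed $d_j$'' versus ``no coordinate varies'' is not justified: in the analogue of Subcase 1.2 the parameter $c_{\eta,j}$ keeps varying while only the values $\nu(a-c_{\eta,j})$, $\ac(a-c_{\eta,j})$ stabilize, and the side condition $\nu(a-c_{\infty,j})<\nu(c_{\infty,j}-c_{\eta,j})$ still involves $\eta$-varying data, so neither branch of your dichotomy captures this situation for several $j$ at once; moreover Fact \ref{fact:basic_on_valuation_indiscernibles}(1) may give different behaviours (pseudo-convergent, decreasing, constant) for different $j$, and the order-reversal device cannot be applied to one coordinate without disturbing the others. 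Second, and more seriously, both branches of your dichotomy end by trying to recover path-consistency in $K$ from path-consistency in $\Gamma$ and $k$ (via Remark \ref{rem:path_consistent_NATP}), which in the $n=1$ proof is done by picking $a_\eta\in K$ with prescribed $\nu(a_\eta-c_{\eta_\infty})$ and $\ac(a_\eta-c_{\eta_\infty})$ relative to a \emph{single} center. With $n>1$ centers $d_1,\ldots,d_n$ the tuples $\bigl(\nu(x-d_j),\ac(x-d_j)\bigr)_{j\le n}$ are not freely prescribable — they are constrained by the ultrametric relations among the $d_j$ — so the realization step fails without exactly the cross-term control you defer as ``the main obstacle.'' That obstacle is not bookkeeping; it is the point where this approach breaks, and the paper avoids it entirely by eliminating one $x$-dependent term from the formula before ever needing to build realizations.
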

\begin{proof}
We prove it by induction on $n$. The case that $n=1$ is done by Lemma \ref{lem:NATP_complexity_one}. Suppose it holds for $n-1$ and it does not hold for $n$. Let $(d_{\eta})_{\eta\in \omega^{\le \omega}}$ witness ATP of $\varphi(x;y)$. We may assume that $(d_{\eta})_{\eta\in \omega^{\le \omega}}$ is strongly indiscernible. For each $\eta\in \omega^{\le \omega}$ Write $d_{\eta}=d_{\eta}^1\ldots d_{\eta}^n d_{\eta}^{\Gamma}d_{\eta}^k$ where each $d_{\eta}^i\in K$, $d_{\eta}^{\Gamma}\subset \Gamma$, and $d_{\eta}^k\subset k$. Note that $d_{\eta}^1\neq d_{\eta}^n$ for each $\eta\in \omega^{\le \omega}$. Let $a\models \{\varphi(x;d_{\eta}):\eta\in \omega^{\omega}\}$. We may assume $(d_\eta)_{\eta\in\omega^\omega}$ is $\delta$-indiscernible over $a$. Then $(d_\eta)_{\eta\in\omega^\omega}$ satisfies exactly one of the following cases.

\medskip

\noindent{\it Case 1.} $\nu(a-d_{\eta}^1)<\nu(d_{\eta}^1-d_{\eta}^n)$ for all $\eta\in\omega^\omega$.

\noindent{\it Case 2.} $\nu(a-d_{\eta}^1)>\nu(d_{\eta}^n-d_{\eta}^1)$ for all $\eta\in\omega^\omega$.

\noindent{\it Case 3.} $\nu(a-d_{\eta}^n)<\nu(d_{\eta}^n-d_{\eta}^1)$ for all $\eta\in\omega^\omega$.

\noindent{\it Case 4.} $\nu(a-d_{\eta}^n)<\nu(d_{\eta}^n-d_{\eta}^1)$ for all $\eta\in\omega^\omega$.

\noindent{\it Case 5.} $\nu(a-d_{\eta}^1)=\nu(a-d_{\eta}^n)=\nu(d_{\eta}^n-d_{\eta}^1)(\neq \infty)$ for all $\eta\in\omega^\omega$.

\medskip

Suppose $(d_\eta)_{\eta\in\omega^\omega}$ satisfies Case 1. Then, $$\nu(a-d_{\eta}^1)=\nu(a-d_{\eta}^n),\ \ac(a-d_{\eta}^1)=\ac(a-d_{\eta}^n)$$ for all $\eta\in\omega^\omega$. Let
\begin{align*}
\varphi_1(x;y)&\equiv \chi(\nu(x-y_1),\ldots,\nu(x-y_{n-1}),\nu(x-y_1),y^{\Gamma})\\
&\wedge \nu(x-y_1)<\nu(y_n-y_1)\\
&\wedge \rho(\ac(x-y_1),\ldots,\ac(x-y_{n-1}),\ac(x-y_1),y^k).
\end{align*}
Clearly $\{\varphi_1(x;d_\eta):\eta\in\omega^\omega\}$ is consistent. Note that $$\nu(x-y_1)<\nu(y_n-y_1)\Rightarrow \nu(x-y_1)=\nu(x-y_n).$$ Thus $\{\varphi_1(x;d_{0^i}):i\in\omega\}$ is not consistent. So, $\varphi_1$ witnesses ATP and it violates the inductive hypothesis.

By the same argument, for Case 2, 3, 4, use $\varphi_2$, $\varphi_3$, $\varphi_4$ respectively to derive a contradiction, where
\begin{align*}
\varphi_2(x;y)&\equiv \chi(\nu(x-y_1),\ldots,\nu(x-y_{n-1}),\nu(y_n-y_1),y^{\Gamma})\\
&\wedge \nu(x-y_1)>\nu(y_n-y_1)\\
&\wedge \rho(\ac(x-y_1),\ldots,\ac(x-y_{n-1}),\ac(y_n-y_1),y^k),\\
\varphi_3(x;y)&\equiv \chi(\nu(x-y_n),\nu(x-y_2),\ldots,\nu(x-y_n),y^{\Gamma})\\
&\wedge \nu(x-y_n)<\nu(y_n-y_1)\\
&\wedge \rho(\ac(x-y_n),\ac(x-y_2),\ldots,\ac(x-y_n),y^k),\\
\varphi_4(x;y)&\equiv \chi(\nu(y_n-y_1),\nu(x-y_2),\ldots,\nu(x-y_n),y^{\Gamma})\\
&\wedge \nu(x-y_n)>\nu(y_n-y_1)\\
&\wedge \rho(\ac(y_1-y_n),\ac(x-y_2),\ldots,\ac(x-y_n),y^k).
\end{align*}
\noindent For Case 5, note that $\nu(a-d_{\eta}^1)=\nu(a-d_{\eta}^n)=\nu(d_{\eta}^n-d_{\eta}^1)(\neq \infty)$ implies that $\ac(a-d_{\eta}^n)=\ac(a-d_{\eta}^1)-\ac(d_{\eta}^n-d_{\eta}^1)(\neq 0)$. By using
\begin{align*}
\varphi_5(x;y)&\equiv \chi(\nu(x-y_1),\ldots,\nu(x-y_{n-1}),\nu(y_n-y_1),y^{\Gamma} )\\
&\wedge \nu(x-y_1)=\nu(y_n-y_1)\\
&\wedge \rho(\ac(x-y_1),\ldots,\ac(x-y_{n-1}),\ac(x-y_1)-\ac(y_n-y_1),y^k)\\
&\wedge \ac(x-y_1)-\ac(y_n-y_1)\neq 0,
\end{align*}
we can derive a contradiction as in the Case 1. This completes the proof.
\end{proof}

\begin{theorem}\label{thm:AKE_NATP}
$\Th(\mathcal K)$ has NATP if and only if $\Th(k)$ has NATP.
\end{theorem}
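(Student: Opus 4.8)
The plan is to prove the two implications separately; the real work has already been done in Lemmas~\ref{lem:NATP_complexity_one} and~\ref{lem:NATP_complexity_n}, so the proof of the theorem will merely assemble those lemmas with the tools of Section~\ref{sec:Combinatorial properties of ATP}.

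For ``$\Th(\mathcal K)$ NATP $\Rightarrow$ $\Th(k)$ NATP'' I would argue the contrapositive. If $\Th(k)$ has ATP, witnessed by an $\CL_k$-formula $\rho(x;y)$ and a tree $(b_\eta)_{\eta\in 2^{<\omega}}$ of tuples from $k$, then, reading $\rho$ as an $\CL_{Pas}$-formula whose variables range over the residue sort, the same tree witnesses ATP in $\mathcal K$: for any antichain $X\subset 2^{<\omega}$ the set $\{\rho(x;b_\eta):\eta\in X\}$ is realized in $k\subseteq\mathcal K$, and a realization of $\{\rho(x;b_\eta),\rho(x;b_\nu)\}$ for $\eta\trn\nu$ would lie in $k$ and contradict inconsistency in $k$. (By relative quantifier elimination and stable embeddedness the structure induced on the residue sort is just the pure $\CL_k$-structure $k$, so no extra care is needed; the analogous remark for the value group, together with the fact that ordered abelian groups are NIP, shows the structure induced on $\Gamma$ is NATP, which will be used below.)

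For ``$\Th(k)$ NATP $\Rightarrow$ $\Th(\mathcal K)$ NATP'' I would assume toward a contradiction that $\Th(\mathcal K)$ has ATP. By Theorem~\ref{one-varable} there is a witness $\varphi(x;y)$ with $|x|=1$, and $x$ is a variable of one of the three sorts. If $x$ is a value-group variable then $\models\forall y\,(\varphi(x;y)\to\Gamma(x))$; since $\Gamma$ is stably embedded (Fact~\ref{fact:basic_facts_on_PAS language}(2)) and the structure induced on $\Gamma$ is NATP, Remark~\ref{rem:induced_structure_ATP} forces $\varphi$ to be NATP, a contradiction. If $x$ is a residue-field variable the same argument applies, using that $\Th(k)$ is NATP. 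Hence $x$ is a variable of the valued-field sort $K$. Now Fact~\ref{fact:basic_facts_on_PAS language}(1)---applied after, if necessary, enlarging each tuple $a_\eta$ to record the finitely many field elements $d_{i,j}$ produced by the reduction, all of which are definable over the original parameters---lets me assume
\[
\varphi(x;y)\equiv\bigvee_{i\le N}\chi_i\!\left(\nu(x-y_{i,1}),\dots,\nu(x-y_{i,m_i}),y_i^{\Gamma}\right)\wedge\rho_i\!\left(\ac(x-y_{i,1}),\dots,\ac(x-y_{i,m_i}),y_i^{k}\right),
\]
with $\chi_i\in\CL_{\Gamma,\infty}$ and $\rho_i\in\CL_k$. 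By Lemma~\ref{lem:disjunction_NATP} (applied repeatedly) some single disjunct
\[
\psi(x;y)=\chi(\nu(x-y_1),\dots,\nu(x-y_n),y^{\Gamma})\wedge\rho(\ac(x-y_1),\dots,\ac(x-y_n),y^k)
\]
then witnesses ATP. But, with $\Th(k)$ (hence $\Th(\Gamma)$, by NIP) having no ATP, Lemma~\ref{lem:NATP_complexity_n} says no tree witnesses ATP of a formula of precisely this form---a contradiction. Therefore $\Th(\mathcal K)$ is NATP.

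I expect the main obstacle to sit in Lemmas~\ref{lem:NATP_complexity_one} and~\ref{lem:NATP_complexity_n}, which do the genuine valued-field combinatorics; within the proof of the theorem itself the only delicate point is the reduction to the linear-factor disjunctive normal form via Fact~\ref{fact:basic_facts_on_PAS language}(1), i.e. checking that the classical reduction of a one-variable polynomial to linear factors over a henselian valued field is uniform enough to be absorbed into the parameter tree. Everything else---Theorem~\ref{one-varable}, the sort analysis through Remark~\ref{rem:induced_structure_ATP}, and Lemma~\ref{lem:disjunction_NATP}---is applied off the shelf.
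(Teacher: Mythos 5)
Your overall route coincides with the paper's: the trivial direction via the residue sort, and for the converse a one-variable witness from Theorem \ref{one-varable}, elimination of the $\Gamma$- and $k$-sorted cases by stable embeddedness together with Remark \ref{rem:induced_structure_ATP}, and reduction of the $K$-sorted case to Lemma \ref{lem:NATP_complexity_n} via Lemma \ref{lem:disjunction_NATP}. The genuine gap sits exactly at the step you flag as ``delicate''. Fact \ref{fact:basic_facts_on_PAS language}(1) is an instance-by-instance statement: for each node $\eta$ of the witnessing tree it produces \emph{some} formula $\psi_\eta(x;z)$ of the special disjunctive shape and \emph{some} new parameters $d_\eta$ with $\varphi(x;c_\eta)$ equivalent to $\psi_\eta(x;d_\eta)$, but neither the shape of $\psi_\eta$ (the number of disjuncts, the number of linear terms, the particular $\chi_i\in\CL_{\Gamma,\infty}$ and $\rho_i\in\CL_k$) nor the parameters are uniform in $\eta$, and your parenthetical claim that the $d_{i,j}$ are ``definable over the original parameters'' is not what the fact gives and there is no reason for it to hold. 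Without uniformity there is no single formula to which Lemma \ref{lem:disjunction_NATP} and Lemma \ref{lem:NATP_complexity_n} can be applied: a family of varying $\psi_\eta$'s is not an ATP witness for any fixed formula, so as written the reduction fails.

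The paper closes precisely this hole with a coloring argument: it takes the witness on a tall tree $\omega^{<\kappa}$ with $\mathrm{cf}(\kappa)$ uncountable, colors each node $\eta$ by the formula $\psi_\eta$ supplied by Fact \ref{fact:basic_facts_on_PAS language}(1) (fewer colors than $\mathrm{cf}(\kappa)$), and applies Remark \ref{rem:tree_monotone_decreasing_coloring} to extract an embedding $\iota:\omega^{<\omega}\to\omega^{<\kappa}$ on whose image one fixed $\psi(x;z)$ works for all nodes; since each new instance is equivalent to the corresponding old one, the antichain-consistency and comparability-inconsistency pattern transfers, $\psi$ witnesses ATP, and Lemmas \ref{lem:disjunction_NATP} and \ref{lem:NATP_complexity_n} yield the contradiction. (A compactness or pigeonhole uniformization of the finitely many possible shapes would also serve.) Your proposal needs this uniformization step inserted; the rest of it matches the paper's argument and is fine.
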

\begin{proof}
It is enough to show the direction of the right to left. Suppose $\Th(k)$ has NATP. By Fact \ref{fact:basic_facts_on_PAS language}(2), Remark \ref{rem:induced_structure_ATP}, and Theorem \ref{one-varable}, it is enough to show that any formula $\varphi(x;\y)$ does not witness ATP when $x$ corresponds to $K$ and $|x|=1$.

Suppose there is a formula $\varphi(x;y)$ witnessing ATP with $x$ corresponding to $K$ and $|x|=1$. Let $\kappa$ be a cardinal whose cofinality is uncountable. Let $(c_{\eta})_{\eta\in \omega^{<\kappa}}$ witness ATP of $\varphi(x;y)$. Consider a coloring $c:\omega^{<\kappa}\rightarrow \CL_{Pas}$ given as follows: For $\eta\in \omega^{<\kappa}$, $c(\eta)=\psi(x;z)$ is a formula such that for some $e$, $\varphi(x;c_{\eta})$ is equivalent to $\psi(x;d)$, which is of the form $$\bigvee_{i<n}\chi_i(\nu(x-d_{i,1}),\ldots,\nu(x-d_{i,m}),d_i^{\Gamma})\wedge \rho_i(\nu(x-d_{i,1}),\ldots,\nu(x-d_{i,m}),d_i^{k}),$$ where $\chi_i\in\CL_{\Gamma,\infty}$ and $\rho_i\in \CL_{k}$. Such a coloring $c$ is well-defined by Fact \ref{fact:basic_facts_on_PAS language}(1). By Remark \ref{rem:tree_monotone_decreasing_coloring}, there is an embedding $\iota:\omega^{<\omega}\rightarrow \omega^{<\kappa}$ such that $(c_{\iota(\eta)})_{\eta\in \omega^{<\omega}}$ witnesses ATP of $\psi(x;z)$, which is impossible by Lemma \ref{lem:disjunction_NATP} and by Lemma \ref{lem:NATP_complexity_n}.
\end{proof}

\begin{example}\label{ex:example_sop_TP2_NATP}
Let $F$ be a $\omega$-free PAC field of characteristic $0$ and let $\Gamma$ be an ordered abelian group. Then $\Th(F)$ in the ring language is NSOP$_1$ ({\it cf.} \cite[Corollary 6.2]{CR}) and TP$_2$ ({\it cf.} \cite[Section 3.5]{C08}), and $\Th(\Gamma)$ is NIP and SOP. Consider the Hahn field $K:=F[[t^{\Gamma}]]$, which is a Henselian valued field of characteristic $(0,0)$. Then, by Theorem \ref{thm:AKE_NATP}, $\Th(K)$ is NATP even though it has TP$_2$ by the residue field and SOP by the value group.
\end{example}

\subsection{Examples of ATP}
In this subsection, we provide examples having ATP.
\begin{example}\label{ex:Skolem arithmetic ATP}
Skolem arithmetic $(\mathbb{N},\cdot)$ has ATP. Thus, any theory interpreting Skolem arithmetic such as $(\mathbb{Z},+,\cdot,0,1)$ and ZFC also has ATP.
\end{example}
\begin{proof}
We start with the following observation.
\begin{claim}\label{claim:key_observation_Skolem_arithmetic}
Let $I$ be a finite set of indices and $S$ be a nonempty class of subsets of $I$ such that $\emptyset\notin S$ and for any nonempty $J\subseteq J'$ in $\mathcal{P}(I)$, if $J'\in S$ then $J\in S$. Let $$\varphi(x,y)\equiv x\neq 1\wedge x|y$$ be the formula saying $x\neq 1$ and $x$ divides $y$. Then there exists $(a_i)_{i\in I}$ in $\mathbb{N}$ such that 
 \begin{enumerate}
     \item for any $J\in S$, $\{\varphi(x,a_j):j\in J\}$ is consistent, and
     \item for any nonempty $J\in \mathcal{P}(I)\backslash S$, $\{\varphi(x,a_j):j\in J\}$ is inconsistent.
 \end{enumerate}
\end{claim}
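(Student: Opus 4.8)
The plan is to exploit the way divisibility behaves in $(\mathbb{N},\cdot)$. Observe that for the formula $\varphi(x,y)\equiv x\neq 1\wedge x\mid y$ and a finite $J\subseteq I$, the set $\{\varphi(x,a_j):j\in J\}$ is consistent if and only if the $a_j$ ($j\in J$) share a common prime factor: any realization is a number $\neq 1$ dividing every $a_j$, hence is divisible by such a common prime, and conversely a common prime realizes the type already in $(\mathbb{N},\cdot)$. Moreover, when $\gcd(a_j:j\in J)=1$ the sentence $\forall x\big(\bigwedge_{j\in J}x\mid a_j\to x=1\big)$ is true in $(\mathbb{N},\cdot)$ with standard parameters, hence in every model of the theory, so the type is inconsistent. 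Thus it suffices to construct $(a_i)_{i\in I}$ with the purely numerical property that, for every nonempty $J\subseteq I$, the $(a_j)_{j\in J}$ have a common prime factor precisely when $J\in S$.

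To do this I would index a family of pairwise distinct primes by the members of $S$: fix $(p_J)_{J\in S}$ with $p_J\neq p_{J'}$ for $J\neq J'$ (possible since $I$, hence $S$, is finite), and set
\[
a_i:=\prod\{\,p_J:\ J\in S,\ i\in J\,\}\qquad(i\in I),
\]
reading the empty product as $1$. Since the $p_J$ are distinct primes, the key bookkeeping fact is that for $J\in S$ and $i\in I$ one has $p_J\mid a_i$ if and only if $i\in J$, and the only primes that divide any $a_i$ are among the $p_J$.

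It then remains to verify the two clauses. If $J\in S$, then $p_J\mid a_j$ for every $j\in J$, so $p_J$ is a common divisor $\neq 1$ and $\{\varphi(x,a_j):j\in J\}$ is consistent, being realized by $p_J$. Conversely, let $\emptyset\neq J\in\mathcal{P}(I)\backslash S$ and suppose toward a contradiction that the $(a_j)_{j\in J}$ share a common prime $p$; since $p$ divides some $a_j\neq 1$, we have $p=p_{J_0}$ for some $J_0\in S$, and $p_{J_0}\mid a_j$ for all $j\in J$ forces $J\subseteq J_0$. As $J$ is nonempty and $S$ is downward closed on nonempty sets, $J_0\in S$ yields $J\in S$, a contradiction; hence $\gcd(a_j:j\in J)=1$ and $\{\varphi(x,a_j):j\in J\}$ is inconsistent.

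The step I expect to be the real (if modest) obstacle is identifying the correct indexing set for the primes: indexing them by the elements of $I$ does not obviously encode $S$, whereas indexing by the members of $S$ itself and letting $a_i$ be the product over the sets of $S$ containing $i$ is exactly what makes ``$p_J\mid a_j\iff j\in J$'' hold, and it is precisely the downward-closure hypothesis on $S$ that converts ``$J\subseteq J_0$ with $J_0\in S$'' into ``$J\in S$'' in the inconsistency direction. One may equally use only the $\subseteq$-maximal members of $S$ to economize on primes, but this is not necessary for the argument.
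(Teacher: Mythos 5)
Your proposal is correct and is essentially the paper's own argument: the paper enumerates $S=\{J_0,\dots,J_{m-1}\}$, lets $a_i$ be the product of the primes $p_n$ with $i\in J_n$, and verifies consistency via the prime $p_n$ and inconsistency via a prime factor of a purported realization plus downward closure of $S$, which is exactly your construction up to notation (indexing primes by members of $S$ rather than by an enumeration). Your additional remark that inconsistency can be checked in the standard model (via the universal sentence when $\gcd=1$) just makes explicit a point the paper leaves implicit.
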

\begin{proof}
Let $\{J_0,J_1,\ldots,J_{m-1}\}$ be an enumeration of $S$.
Let $p_n$ be the $n$th prime number and $a_i$ be the product of all $p_n$'s such that $i\in J_n$. If $i$ is in none of $J_n\in S$ then we set $a_i=1$.
Note that $p_n$ divides $a_i$ if and only if $J_n\ni i$.
We claim that this $(a_i)_{i\in I}$ witnesses the given condition.

Choose any set $J_n$ in $S$. Then for any $i\in J_n$, $p_n$ is a prime factor of $a_i$.
Thus $\{\varphi(x,a_j):j\in J\}$ is realized by $p_n$.
On the other hand, choose any nonempty set $J$ not in $S$ and suppose $N\in\mathbb{N}$ realizes the set $\{\varphi(x,a_j):j\in J\}$. Since $N\neq1$, we can choose a prime factor $p$ of $N$, then $p$ is a prime factor of $a_j$ for all $j\in J$ so there is some $n<m$ such that $p=p_n$. Then $J$ is a subset of $J_n$ and so $J\in S$, which is a contradiction.
\end{proof}
\noindent Let $I=2^{<n}$ for some $n<\omega$. Let $S$ be the set of all antichains in $I$. Clearly, any subset of antichain is also an antichain. By Claim \ref{claim:key_observation_Skolem_arithmetic} and compactness, the formula $\varphi$ witnesses ATP.
\end{proof}

\begin{example}\label{ex:atomless_BA_ATP}
The theory of atomless Boolean algebras has ATP.
\end{example}
\begin{proof}
Let $\mathcal B=(B,\wedge,\vee,0,1)$ be an atomless Boolean algebra and let $\le$ be the partial order on $B$ defined as follow: For $x,y\in B$, $$x\le y\Leftrightarrow x\wedge y=x.$$
We want to show $$\varphi(x,y)\equiv x\neq 0\wedge x\le y$$ witnesses ATP.

Let $0<n<\omega$ be some natural number and $\{J_0,J_1,\ldots,J_{m-1}\}$ be an enumeration of all antichains in $2^{<n}$.
Choose $p_0,\ldots,p_{m-1}$ be non-zero elements in $B$ such that for all $i<j<m$, $p_i\wedge p_j=0$ ($\ast$) (This is possible because $\mathcal B$ is atomless).
For each $\eta\in 2^{<n}$, let $I_\eta=\{i<m:\eta\in J_i\}$ and let $a_\eta$ be the disjunction of all $p_i$'s such that $i\in I_\eta$. 
Note that $p_i\le a_\eta$ if and only if $J_i\ni \eta$.
We claim that this $(a_\eta)_{\eta\in 2^{<n}}$ witnesses the ATP conditions.

For any antichain $J_n$ in $2^{<n}$, by definition, $p_n$ realizes $\{\varphi(x,a_\eta):\eta\in J_n\}$.
On the other hand, let $\eta$ and $\nu$ be indices in $2^{<n}$ such that $\eta\unlhd\nu$, and suppose $\varphi(x,a_\eta)\wedge\varphi(x,a_\nu)$ is realized by some $b\in B$. 
Then
$$b=b\wedge a_\eta\wedge a_\nu=b\wedge\bigvee_{i\in I_\eta}p_i\wedge\bigvee_{i\in I_\nu}p_i=b\wedge\bigvee_{i\in I_\eta,j\in I_\nu}(p_i\wedge p_j)$$ 
is not zero, so by ($\ast$), there should be some $i<m$ both in $I_\eta$ and $I_\nu$. 
Then by definition, both $a_\eta$ and $a_\nu$ is in an antichain $J_i$, a contradiction.

Compactness finishes the proof.
\end{proof}

\begin{remark}
 The same argument in Example \ref{ex:atomless_BA_ATP} shows that the formula $\varphi$ witnesses ATP in infinite atomic Boolean algebras, too.
\end{remark}

\end{document}